\documentclass[10pt]{amsart}
\usepackage{graphicx}
\usepackage{amscd}
\usepackage[dvipsnames]{xcolor}
\usepackage{amsmath}
\usepackage{amsthm}
\usepackage{amsfonts}
\usepackage{amssymb}
\usepackage{mathrsfs}
\usepackage{enumerate}
\usepackage[pagebackref,hypertexnames=false, colorlinks, citecolor=blue, linkcolor=red, pdfstartview=FitB]{hyperref}
\usepackage[latin1]{inputenc}
%


\newcommand{\bB}{{\mathbb{B}}}
\newcommand{\bC}{{\mathbb{C}}}
\newcommand{\bD}{{\mathbb{D}}}

\newcommand{\bN}{{\mathbb{N}}}

\newcommand{\bR}{{\mathbb{R}}}
\newcommand{\bS}{{\mathbb{S}}}
\newcommand{\bT}{{\mathbb{T}}}

  \newcommand{\A}{{\mathcal{A}}}
  \newcommand{\B}{{\mathcal{B}}}
  \newcommand{\C}{{\mathcal{C}}}
  
  \newcommand{\E}{{\mathcal{E}}}

\renewcommand{\H}{{\mathcal{H}}}

  \newcommand{\M}{{\mathcal{M}}}

  \newcommand{\R}{{\mathcal{R}}}
\renewcommand{\S}{{\mathcal{S}}}

  \newcommand{\W}{{\mathcal{W}}}
  \newcommand{\X}{{\mathcal{X}}}
  \newcommand{\Y}{{\mathcal{Y}}}

    \newcommand{\cA}{{\mathcal{A}}}

  \newcommand{\cR}{{\mathcal{R}}}


\newcommand{\fJ}{{\mathfrak{J}}}
\newcommand{\fK}{{\mathfrak{K}}}

\newcommand{\fT}{{\mathfrak{T}}}

\newcommand{\fz}{{\mathfrak{z}}}

\newcommand{\rA}{\mathrm{A}}

\newcommand{\rC}{\mathrm{C}}


\renewcommand{\phi}{\varphi}

\newcommand{\upchi}{{\raise.35ex\hbox{$\chi$}}}



\newcommand{\ol}{\overline}

\newcommand{\AB}{{\mathrm{A}(\mathbb{B}_d)}}
\newcommand{\AD}{{\mathrm{A}(\mathbb{D})}}

\newcommand{\Hinf}{{H^\infty(\mathbb{D})}}


\newcommand{\qand}{\quad\text{and}\quad}


\newcommand{\id}{\operatorname{id}}

\newcommand{\re}{\operatorname{Re}}

\newcommand{\AC}{\operatorname{AC}}
\newcommand{\ac}{\operatorname{AC}}
\newcommand{\SG}{\operatorname{SG}}
\newcommand{\sing}{\operatorname{SG}}

\newcommand{\Han}{\operatorname{Han}}

\let\phi\varphi
\newtheorem{lemma}{Lemma}[section]
\newtheorem{theorem}[lemma]{Theorem}
\newtheorem{proposition}[lemma]{Proposition}
\newtheorem{corollary}[lemma]{Corollary}
\newtheorem{theoremx}{Theorem}

\theoremstyle{definition}

\newtheorem{example}[lemma]{Example}

\newtheorem{remark}[lemma]{Remark}

\title[Lebesgue decompositions and the Gleason--Whitney property]{Lebesgue decompositions and the Gleason--Whitney property for operator algebras}

\author{Rapha\"el Clou\^atre}

\address{Department of Mathematics, University of Manitoba, Winnipeg, Manitoba, Canada R3T 2N2}

\email{raphael.clouatre@umanitoba.ca\vspace{-2ex}}
\author{Michael Hartz}
\address{Fachrichtung Mathematik, Universit\"at des Saarlandes, 66123 Saarbr\"ucken,Germany}

\email{hartz@math.uni-sb.de\vspace{-2ex}}
\thanks{R.C. was partially supported by an NSERC Discovery Grant. M.H. was partially supported by a GIF grant and by the Emmy Noether Program of the German Research Foundation (DFG Grant 466012782).}
\date{\today}
\begin{document}
\begin{abstract}
Broadly speaking, this paper is concerned with dual spaces of operator algebras. More precisely, we investigate the existence of what we call Lebesgue projections: central projections in the bidual of an operator algebra that detect the weak-$*$ continuous part of the dual space. Associated to any such projection is a Lebesgue decomposition of the dual space. We are particularly interested in Lebesgue projections in the context of inclusions of operator algebras. We show how their presence is intimately connected with an extension property for the inclusion reminiscent of a classical theorem of Gleason and Whitney.
We illustrate that this Gleason--Whitney property fails in many examples of concrete operator algebras of functions, which partly explains why compatible Lebesgue decompositions are scarce, and highlights that the classical inclusion $H^\infty\subset L^\infty$ on the circle does not display generic behaviour.  \end{abstract}

\maketitle

\section{Introduction}
Let $X$ be a compact Hausdorff space and let $\lambda$ be a regular Borel probability measure on $X$. The classical Lebesgue decomposition expresses an arbitrary finite Borel measure $\mu$ on $X$ as  $\mu_a+\mu_s$, where $\mu_a$ is absolutely continuous with respect to $\lambda$ and $\mu_s$ is singular with respect to $\lambda$. Equivalently, there is a closed subspace $S$ with the property that
\[
\rC(X)^*=A\oplus_1 S
\]
where $A\subset \rC(X)^*$ consists of those functionals admitting a weak-$*$ continuous extension to $L^\infty(X,\lambda)$. 

Such a \emph{Lebesgue decomposition} exists much more generally. Let $\W$ be a von Neumann algebra and let $\fT\subset \W$ be a weak-$*$ dense $\rC^*$-subalgebra. Let $\ac_\W(\fT)\subset \fT^*$ denote the subspace consisting of functionals admitting a weak-$*$ continuous extension to $\W$; it can be verified that this space is closed. The space $\ac_\W(\fT)^\perp$ is readily seen to be a weak-$*$ closed two-sided ideal of the von Neumann algebra $\fT^{**}$. As such, there is a unique central projection $\fz\in \fT^{**}$ with the property that  \[\fT^*=\ac_\W(\fT)\oplus_1 (\fz \fT^{**})_\perp\] (see Section \ref{S:OA} for details). We refer to $\fz$ as the \emph{Lebesgue projection} for $\fT$ relative to $\W$. 
In the language of Banach space theory, see \cite{HWW}, this says that $\ac_\W(\fT)$ is an $L$-summand (also known as $\ell^1$-direct summand) of $\fT^*$. 

We caution the reader that what we call Lebesgue decomposition is different from the objects considered in \cite{henle1972},\cite{exel1990} or \cite{GK2009}: whereas those papers are concerned with absolute continuity with respect to a given state or completely positive map, here we are interested in weak-$*$ continuous extensions. 

A main theme in this paper is the existence of Lebesgue decompositions and Lebesgue projections for general, possibly non-selfadjoint, operator algebras. In this non-selfadjoint word, the argument above is not available, and as such Lebesgue decompositions are much harder to come by. Such difficulties have prompted recent studies of absolutely continuous functionals through their manifestations as Henkin functionals \cite{CD2016duality},\cite{BHM2018},\cite{DH2020},\cite{CT2022ncHenkin}.

Concretely, we consider the following setup. Let $\mathcal{W}$ be a von Neumann algebra and let $\mathcal{A} \subset \mathcal{W}$ be a unital (not necessarily self-adjoint) subalgebra.
Let $\AC_{\mathcal{W}}(\mathcal{A}) \subset \mathcal{A}^{*}$ be the subspace
of all functionals that extend to weak-$*$ continuous functionals on $\mathcal{W}$.
We say that $\mathcal{A}$ admits a \emph{Lebesgue decomposition} if $\AC_{\mathcal{W}}(\mathcal{A})$ is a norm-closed $L$-summand of $\mathcal{A}^*$; see Section \ref{S:compat} for more details.

One appeal of Lebesgue decompositions is that they can be used to establish uniqueness of preduals for dual spaces \cite{pfitzner2007}. Another use for them is the study of Hilbert space operators through well-behaved functional calculi \cite{nagy2010},\cite{CD2016abscont},\cite{BHM2018}. Underlying the latter idea is the fact that some concrete algebras of functions are known to admit a Lebesgue decomposition. Indeed, this is true of  the ball algebra \cite[Chapter 9]{rudin2008}, of the norm closure of the polynomials in the multiplier algebra of the Drury--Arveson space \cite{CD2016duality} and of many other norm closed algebras of multipliers of function spaces on the ball \cite{DH2020}. In that vein, it should also be noted that dual spaces of certain non-commutative operator algebras of functions have been shown to possess a certain decomposition property that is weaker than what we call a Lebesgue decomposition \cite[Proposition 5.9]{davidson2005},\cite{JM2019}.

Other instances of spaces admitting a Lebesgue decompositon include certain essential commutants \cite{voiculescu2017}, as well as the classical algebra $\Hinf$ \cite{ando1978} and some of its natural non-commutative analogues \cite{ueda2009}. It is relevant to mention that in all those cases, the results were established by leveraging the Lebesgue decompositions of the corresponding ambient von Neumann algebras. To determine whether this strategy can be employed for general inclusions of operator algebras is one of the guiding principles of the paper, which is organized as follows.

In Section \ref{S:compat}, we introduce our main objects of study: Lebesgue decompositions. Of particular importance for our purposes is the notion of compatibility for pairs of decompositions. Theorem \ref{T:Lebcompat} identifies necessary and sufficient conditions for compatibility to hold. 

In Section \ref{S:OA}, we examine Lebesgue decompositions in the setting of operator algebras. We show in Theorem \ref{T:Lebproj} that a Lebesgue decomposition behaves well with respect to the multiplicative structure and arises via a Lebesgue projection in the second dual; this is related to the notion of \emph{inner $M$-ideals} (see \cite[Section V.3]{HWW}).

Section \ref{S:GW} is centred around a property introduced by Blecher and Labuschagne in \cite{BL2007},\cite{blecherlabuschagne2007},\cite{BL2018} which we briefly recall here. Let $\W$ be a von Neumann algebra. Let $\B\subset \W$ be unital operator algebra and let $\A\subset \B$ be a unital subalgebra. We say that the inclusion $\A\subset\B$ has the \emph{Gleason--Whitney property} relative to $\W$ if, whenever a bounded linear functional on $\A$ admits a weak-$*$ continuous extension to $\W$, then so do all of its Hahn--Banach extensions to $\B$. We explore the relationship between the Gleason--Whitney property and the existence of Lebesgue decompositions and obtain the following, which is one of our main results and is found in Theorem \ref{T:GWcompat}.

\begin{theoremx}\label{T:main1}
  Assume that $\A$ and $\B$  both admit Lebesgue decompositions and let $\fz_\A\in \A^{**}$ and $\fz_\B\in \B^{**}$ denote the corresponding Lebesgue projections.   Then, the following statements are equivalent.
\begin{enumerate}[{\rm (i)}]

\item The inclusion $\A\subset \B$ has the  Gleason--Whitney property relative to $\W$.
\item We have $\fz_\A=\fz_\B$.
\item The Lebesgue decompositions of $\A$ and $\B$ are compatible.
\end{enumerate}
\end{theoremx}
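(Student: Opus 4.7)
My plan is to prove the cycle via (iii) $\Rightarrow$ (i), (ii) $\Leftrightarrow$ (iii) by pure duality, and (i) $\Rightarrow$ (iii) as the most delicate direction. The direction (iii) $\Rightarrow$ (i) is the easiest: given compatibility, take $\phi \in \AC_\W(\A)$ and a Hahn--Banach extension $\psi \in \B^*$ with $\|\psi\| = \|\phi\|$. Decomposing $\psi = \psi_a + \psi_s$ in $\B^*$, compatibility forces $\psi_s|_\A = (\psi|_\A)_s = \phi_s = 0$, so $\|\psi_a\| \geq \|\psi_a|_\A\| = \|\phi\| = \|\psi\| = \|\psi_a\| + \|\psi_s\|$, pinning $\psi_s = 0$ and $\psi \in \AC_\W(\B)$.

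For (ii) $\Leftrightarrow$ (iii), I would pair $\iota^{**}(\fz_\A) = \fz_\B$ against $\psi \in \B^*$ using $\langle \fz_\A, \phi \rangle = (\fz_\A \phi)(1) = \phi_a(1)$ in $\A^*$ and the analogue in $\B^*$. Hence (ii) reads as the pointwise-at-$1$ equality $(\psi|_\A)_a(1) = \psi_a(1)$ for every $\psi \in \B^*$. Substituting the module-shifted $a \cdot \psi$ for $\psi$ with $a \in \A$ acting on $\B^*$ via the $\B^{**}$-bimodule structure, and using that the central projections $\fz_\A$ and $\fz_\B$ commute with the $\A$-module action while restriction to $\A$ intertwines it, promotes the equality to $(\psi|_\A)_a(a) = \psi_a(a)$ for all $a \in \A$, which is compatibility. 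The converse is immediate from evaluating compatibility at $1$.

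For (i) $\Rightarrow$ (iii), my key preliminary observation---independent of GW---is that every Hahn--Banach extension $\tilde{\phi}_s \in \B^*$ of a singular $\phi_s \in \A^*$ is automatically singular in $\B^*$. Writing $\tilde{\phi}_s = \chi_a + \chi_s$ and combining $\|\tilde{\phi}_s\| = \|\chi_a\| + \|\chi_s\| = \|\phi_s\|$ with $\chi_a|_\A + \chi_s|_\A = \phi_s$ and the contractivity of restriction, the $L$-summand decomposition $\chi_s|_\A = -\chi_a|_\A + \phi_s$ in $\A^*$ forces $\|\chi_a|_\A\| + \|\chi_a\| \leq 0$, whence $\chi_a = 0$. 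Combined with GW, this produces, for each $\phi \in \A^*$, a compatible Hahn--Banach extension $\eta = \tilde{\phi}_a + \tilde{\phi}_s \in \B^*$. For an arbitrary $\psi \in \B^*$, setting $\mu := \psi - \eta$ with $\eta$ such an extension of $\psi|_\A$ yields $\mu \in \A^\perp$, and by linearity of the $L$-projection $P_\B$, compatibility of $\psi$ reduces to $P_\B$-invariance of the annihilator $\A^\perp$.

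The main obstacle is this last step. Writing $\beta := \mu_a|_\A = -\mu_s|_\A \in \AC_\W(\A)$ for $\mu \in \A^\perp$, the goal is to deduce $\beta = 0$ from GW. Since GW forces every Hahn--Banach extension of $-\beta$ to be AC, the singular functional $\mu_s$ (restricting to $-\beta$) cannot attain the extension norm $\|\beta\|$ without vanishing, yielding the strict inequality $\|\mu_s\| > \|\beta\|$ whenever $\beta \neq 0$; combining this strictness with a Hahn--Banach extension of $\beta$ itself (AC by GW) via an averaging or minimization argument should yield the required contradiction.
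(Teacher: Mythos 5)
Your directions (iii) $\Rightarrow$ (i) and (ii) $\Leftrightarrow$ (iii) are correct. The first is the same argument as the paper's Proposition \ref{P:GWcompatspaces}. The second is a pleasant repackaging of Proposition \ref{prop:Lebesgue_proj_smaller_alg}: evaluating $\fz_\A=\fz_\B$ against $a\cdot\psi$ and using centrality to promote equality at $1$ to equality at every $a\in\A$ is legitimate and slightly slicker than the paper's route. Your preliminary observation that Hahn--Banach extensions of singular functionals are singular is also correct, and your norm-bookkeeping proof of it is a genuine alternative to the paper's Lemma \ref{L:HBsing} (which goes through the net characterization of Proposition \ref{P:singnorm}).

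The problem is the last step of (i) $\Rightarrow$ (iii), which is where all the content of the theorem lives, and which you do not actually prove. You correctly reduce to showing that for $\mu\in\A^\perp$ with decomposition $\mu=\mu_a+\mu_s$ in $\B^*$, the functional $\beta:=\mu_a|_\A=-\mu_s|_\A$ vanishes. But the only leverage the Gleason--Whitney property gives you is over \emph{norm-preserving} extensions of absolutely continuous functionals, and $\mu_s$ is in general a badly non-norm-preserving extension of $-\beta$; your strict inequality $\|\mu_s\|>\|\beta\|$ is all GW yields here, and it says nothing more than that $\mu_s$ fails to be a Hahn--Banach extension of its own restriction --- true of most functionals and not contradictory. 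The proposed ``averaging or minimization argument'' is not supplied, and there is strong evidence it cannot be supplied at this level of generality: your argument uses only the norm structure, so if it worked it would prove that (i) implies (iii) for arbitrary inclusions of normed spaces, a question the authors explicitly state they were unable to resolve (see the remark following Theorem \ref{T:GWcompat}).

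What the paper's proof of the hard direction actually uses, and what your sketch is missing, is positivity together with the two-sided module action. The paper shows $\fz_\A\le\fz_\B$ by taking a state $\psi$, forming the positive functional $\fz_\A\cdot\psi\cdot\fz_\A$, and normalizing it to a state $\theta$; the point is that a \emph{state} on $\B$ is automatically a Hahn--Banach extension of its restriction to the unital subalgebra $\A$, so GW applies to it even though it was not constructed as a norm-preserving extension. This is the mechanism that converts GW into control of arbitrary functionals, and it has no analogue in your purely metric argument. To repair your proof you would need to reintroduce this ingredient (e.g.\ via the reduction to states in Theorem \ref{T:GWstates} and the compression $\fz_\A\cdot\psi\cdot\fz_\A$), at which point you are essentially reproducing the paper's proof of (i) $\Rightarrow$ (ii).
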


In Section \ref{S:obstruction}, we study the possibility of
obtaining a version of Theorem \ref{T:main1} without assuming a priori that $\mathcal{A}$
admits a Lebesgue decomposition. In particular,
a significant strengthening of the implication (i) $\Rightarrow$ (ii)
of Theorem \ref{T:main1} would be the statement that
if $\mathcal{B}$ admits a Lebesgue decomposition and $\mathcal{A} \subset \mathcal{B}$ has the Gleason--Whitney
property, then the Lebesgue projection of $\mathcal{B}$ belongs to $\mathcal{A}^{**}$.
In turn, this would imply the existence of a Lebesgue decomposition of $\mathcal{A}$ that is
compatible with that of $\mathcal{B}$; see Proposition \ref{prop:Lebesgue_proj_smaller_alg}.
For instance, this would make it possible to deduce And\^o's Lebesgue decomposition
of the dual of $H^\infty(\mathbb{D})$ from the Lebesgue decomposition of the dual
of $L^\infty$ and the classical Gleason--Whitney theorem.
We were unable to prove such a strengthening.
As a step towards this goal, we establish Theorem \ref{T:Lebesgue_projection_character}, which contains the following result as a special case.

\begin{theoremx}\label{T:main2}
  Let $\mathcal{H}$ be a Hilbert space and let $\fT\subset B(\H)$ be a unital $\rC^*$-algebra containing the compact operators. Let $\A\subset \fT$ be a unital subalgebra such that the inclusion $\A\subset \fT$ has the Gleason--Whitney property relative to $B(\H)$. Then $\fT$ admits a non-trivial Lebesgue projection $\fz$, and for any absolutely continuous character $\chi$ on $\mathcal{A}$, there is a non-trivial projection $p \in \A^{**}$ such that $p \le \fz$ and $p(\chi) = 1$.
\end{theoremx}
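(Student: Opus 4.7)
The plan has three main stages.

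\emph{Stage 1 (existence and non-triviality of $\fz$):} Since $\fT \supset \K(\H)$ and $\K(\H)$ is weak-$*$ dense in $B(\H)$, so is $\fT$; the introduction's general machinery then produces the Lebesgue projection $\fz \in \fT^{**}$. Any normal state of $B(\H)$ restricts to a nonzero AC functional on $\fT$, forcing $\fz \ne 0$. When $\H$ is infinite dimensional (the only nontrivial case), any state of $\fT$ that factors through $\fT/\K(\H)$ is singular, because its would-be weak-$*$ continuous extension to $B(\H)$ would have to vanish on the weak-$*$ dense $\K(\H)$; this gives $\fz \ne 1$. A useful structural fact I would record at this stage is that the canonical normal embedding $\K(\H)^{**}\hookrightarrow \fT^{**}$ identifies $\K(\H)^{**}\cong B(\H)$ with the summand $\fz\fT^{**}$, so every rank-one projection $P_\xi = |\xi\rangle\langle\xi|$ sits in $\fz\fT^{**}$ and satisfies $P_\xi\le \fz$.

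\emph{Stage 2 (pure AC extension of $\chi$):} Hahn--Banach produces some $\phi\in \fT^*$ with $\|\phi\|=1$ and $\phi|_\A = \chi$; since $\phi(I)=1=\|\phi\|$, $\phi$ is automatically a state on $\fT$, and the Gleason--Whitney hypothesis forces $\phi$ to be AC. The HB extensions of $\chi$ form a nonempty weak-$*$-compact convex subset of $\fT^*$; by Krein--Milman together with the classical fact that extreme extensions of a character are pure states, I may replace $\phi$ by a pure AC state on $\fT$. A pure AC state extends to a pure normal state on $B(\H)$, which is a vector state, so $\phi = \omega_\xi|_\fT$ for some unit $\xi\in\H$, and $\chi(a) = \langle a\xi,\xi\rangle$ for $a\in\A$.

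\emph{Stage 3 (construction of $p$):} Let $\rho:\A^{**}\to B(\H)$ denote the normal extension of $\A\hookrightarrow B(\H)$; uniqueness of weak-$*$ continuous extensions gives the factorization $\td\chi = \omega_\xi\circ\rho$. Consequently, a projection $p\in\A^{**}$ satisfies $\td\chi(p)=1$ precisely when $\rho(p)\xi=\xi$, while $p\le\fz$ in $\fT^{**}$ is equivalent to $p\in \A^{**}\cap\fz\fT^{**}$. The naive guess $p=P_\xi$ meets both constraints in $\fT^{**}$ but typically fails to lie in $\A^{**}$. I would instead work inside the diagonal $\Delta:= \A^{**}\cap (\A^{**})^*$, a von Neumann algebra on which $\td\chi$ restricts to a normal state, and seek $p$ as the support of $\td\chi|_\Delta$ in a sub-von-Neumann-algebra of $\Delta$ compatible with the AC summand of $\fT^{**}$. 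A dual viewpoint, when feasible, is to realize $p$ as a weak-$*$ accumulation point in $\A^{**}$ of a peaking net $(a_\alpha)\subset\A$ with $\chi(a_\alpha)=1$ whose images $\rho(a_\alpha)$ converge to $P_\xi$. Non-triviality of $p$ is then automatic: $\td\chi(p)=1$ forces $p\ne 0$, and $p\le\fz\ne 1$ forces $p\ne 1$.

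The main obstacle is precisely Stage 3. The naive support of $\td\chi|_\Delta$ sits below the central projection $e\in \Delta$ implementing $\ker(\rho|_\Delta)$, and one only knows $\fz\le e$ in $\fT^{**}$ rather than equality. Forcing the stronger bound $p\le \fz$ requires a finer use of the GW hypothesis, together with the identification $\fz\fT^{**}\cong B(\H)$ supplied by the presence of $\K(\H)$; this is exactly the gap that blocks the full strengthening ``$\fz\in\A^{**}$'' and is the reason for settling on a character-by-character statement.
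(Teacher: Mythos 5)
Your Stage~1 is essentially the paper's Proposition~\ref{P:openLeb}: density of $\fK(\H)$ gives the Lebesgue projection $\fz$ with $\fK(\H)^{\perp\perp}=\fz\fT^{**}$ (so $\fz$ is \emph{open}), and a state factoring through the Calkin quotient shows $\fz\neq I$. But the heart of the theorem is the construction of $p$, and there your proposal has a genuine gap that you yourself acknowledge: the support projection of $\widetilde\chi$ on the diagonal $\Delta$ only sits below a projection $e$ with $\fz\le e$, and nothing forces $p\le\fz$; likewise, a weak-$*$ accumulation point of a ``peaking net'' $(a_\alpha)$ with $\chi(a_\alpha)=1$ is merely a norm-one element $\Lambda$ with $\Lambda(\chi)=1$ --- there is no reason for it to be idempotent, and you give no mechanism for producing such a net from the Gleason--Whitney hypothesis in the first place. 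Stage~2 (purification of the extension to a vector state) is also not needed and does not feed into any working construction; the identity $\chi=\omega_\xi|_{\A}$ by itself says nothing about projections in $\A^{**}$.

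The paper closes exactly this gap with two ideas you are missing. First (Lemma~\ref{L:approxGW}), using the openness of $\fz$ --- an increasing net of positive contractions $t_\alpha\in\fT$ with $t_\alpha\nearrow\fz$ --- together with Arveson's duality formula
\[
\sup\{\re\chi(a): a\in\A,\ \re a\le t_\alpha-I\}=\min\{\Phi(t_\alpha):\Phi\in\E_\chi\}-1,
\]
the Gleason--Whitney property (which makes every state extension $\Phi\in\E_\chi$ absolutely continuous, so $\Phi(t_\alpha)\to 1$) and Dini's theorem, one produces $a_n\in\A$ with $\re a_n\le\fz-I$ and $\re\chi(a_n)\to 0$. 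Second, setting $b_n=e^{a_n}$, von Neumann's inequality for the half-plane gives $\|b_n\|\le 1$ and $\|(I-\fz)b_n\|\le e^{-1}$, while $|\chi(b_n)|\to 1$; after unimodular rotation, the set $S=\{\Lambda\in\A^{**}:\|\Lambda\|\le 1,\ \|(I-\fz)\Lambda\|\le e^{-1},\ \Lambda(\chi)=1\}$ is a nonempty weak-$*$ compact semigroup (multiplicativity of $\chi$ is used here), and the fixed-point theorem \cite[Theorem 1.1]{FK1989} yields an idempotent $p\in S$; then $\|p\|\le 1$ makes $p$ a projection, $p(\chi)=1$, and $(I-\fz)p$ is an idempotent of norm $<1$, hence zero, giving $p\le\fz$. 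Without the quantitative estimate $\|(I-\fz)b_n\|\le e^{-1}$ and the semigroup idempotent theorem, your Stage~3 cannot be completed along the lines you sketch.
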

This result can also be interpreted as an obstruction to the Gleason--Whitney property
of an inclusion $\mathcal{A} \subset \mathcal{B}$, since $\mathcal{A}^{**}$ must contain
a non-trivial projection.

Section \ref{S:examples} contains many detailed and non-trivial examples illustrating the previous ideas.
The following result summarizes some of these examples.

\begin{theoremx}
  The following inclusions of algebras do not satisfy the Gleason--Whitney property (relative to the second algebra, unless otherwise stated):
  \begin{enumerate}[\rm (a)]
    \item $C([0,1]) \subset L^\infty([0,1])$;
    \item  $L^\infty([0,1]) \subset B(L^2([0,1]))$;
    \item $\rA(\mathbb{D}) \subset H^\infty(\mathbb{D})$ relative to $L^\infty(\mathbb{T})$;
    \item $H^\infty(\mathbb{D}) \subset B(H^2(\mathbb{D}))$;
    \item $H^\infty(\mathbb{D}) \subset L^\infty(\mathbb{D})$;
    \item $H^\infty(\mathbb{B}_d) \subset L^\infty(\partial \mathbb{B}_d)$ if $d \ge 2$;
    \item $H^\infty(\mathbb{D}^d) \subset L^\infty(\mathbb{T}^d)$ if $d \ge 2$.
  \end{enumerate}
\end{theoremx}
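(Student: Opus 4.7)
The plan is to falsify the Gleason--Whitney property in each case by constructing, for a suitably chosen absolutely continuous functional $\phi \in \AC_\W(\A)$, a Hahn--Banach extension to $\B$ which is not weak-$*$ continuous on $\W$. In all cases where $\A$ is weak-$*$ dense in $\B$ (viewed inside $\W$), the weak-$*$ continuous extension is unique, so any second Hahn--Banach extension of equal norm suffices.

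For the operator-algebra cases (b) and (d), where $\W = B(\H)$, use \emph{states at infinity}: set
\[
\omega(T) = \lim_{\mathcal{U}} \langle T e_n, e_n \rangle,
\]
for a free ultrafilter $\mathcal{U}$ on $\mathbb{N}$ and an orthonormal sequence $e_n \to 0$ weakly. In (b), with $e_n(x) = \sqrt{2}\sin(n\pi x) \in L^2([0,1])$, Riemann--Lebesgue gives $\omega(M_h) = \int_0^1 h \, dx$, so $\omega|_{L^\infty}$ is weak-$*$ continuous. In (d), with $e_n = z^n \in H^2(\mathbb{D})$, one has $\omega|_{H^\infty}(f) = f(0)$, also weak-$*$ continuous. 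In both cases $\omega(K) = 0$ for every compact $K$, hence $\omega$ cannot be represented by a trace-class operator and is not weak-$*$ continuous on $B(\H)$. Case (a) is analogous in principle: $\phi(f) = \int_0^1 f \, dx$ on $C([0,1])$ has a unique weak-$*$ continuous extension to $L^\infty$ by weak-$*$ density of $C$ in $L^\infty$, while a Hahn--Banach argument on $C([0,1]) + \mathbb{C}\chi_{[0,1/2]}$ shows $\chi_{[0,1/2]}$ may be assigned any value in $[0,1]$ consistent with a norm-$1$ extension; any such value different from $1/2$ yields a non-weak-$*$-continuous Hahn--Banach extension.

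Case (c) reduces to (a) via the classical Gleason--Whitney theorem. Take $\phi(f) = f(0) = \int_{\mathbb{T}} f\, dm$ on $\rA(\mathbb{D})$: it is absolutely continuous, and by F.\ and M.\ Riesz its unique representing measure on $C(\mathbb{T})$ is Lebesgue, so Hahn--Banach extensions from $\rA(\mathbb{D})$ to $L^\infty(\mathbb{T})$ restrict to Lebesgue on $C(\mathbb{T})$ and are non-unique as in (a). Classical Gleason--Whitney for $H^\infty(\mathbb{D}) \subset L^\infty(\mathbb{T})$ then puts these in bijection with Hahn--Banach extensions from $\rA(\mathbb{D})$ to $H^\infty(\mathbb{D})$; since a real $H^1$-function is constant, the unique weak-$*$ continuous one corresponds to $g = 1 \in L^1(\mathbb{T})$, and any other extension fails weak-$*$ continuity. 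For (e)--(g) in several variables, absolutely continuous interior point evaluations are provided by Bergman/Poisson/Szeg\H{o} kernels, while non-unique Hahn--Banach extensions to the relevant $L^\infty$ follow from classical results on representing measures and the failure of multi-variable Gleason--Whitney (see \cite{rudin2008}); case (e) additionally uses the mismatch between the weak-$*$ topologies on $H^\infty(\mathbb{D})$ inherited from $L^\infty(\mathbb{T})$ and from $L^\infty(\mathbb{D})$. The main obstacle across all cases is a delicate verification that the constructed extension is genuinely non-weak-$*$-continuous on $\W$; where a direct verification becomes cumbersome, Theorem \ref{T:main2} supplies an alternative obstruction, since the absolutely continuous characters of these function algebras are non-isolated interior point evaluations and hence $\A^{**}$ cannot support the requisite non-trivial projection beneath the ambient Lebesgue projection.
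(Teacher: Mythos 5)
Your overall strategy (exhibit one absolutely continuous functional with one singular Hahn--Banach extension) is the right one, and parts (b) and (d) are correct as written: the ultrafilter state at infinity $\omega$ annihilates $\fK(\H)$ and hence cannot be weak-$*$ continuous, while its restriction is the vector state at $1$; this is a clean, more hands-on substitute for the paper's route through Lemma \ref{L:GWquotient} and Proposition \ref{P:MFTF}. The other cases, however, contain genuine gaps. For (a), the function $\chi_{[0,1/2]}$ cannot do the job: it is Riemann integrable, so its lower and upper semicontinuous envelopes have the same integral, and squeezing between continuous $g \le \chi_{[0,1/2]} \le h$ (as in Lemma \ref{L:semicont} and Theorem \ref{T:riemann_gleason}) forces \emph{every} state on $L^\infty([0,1])$ extending Lebesgue integration on $C([0,1])$ to assign $\chi_{[0,1/2]}$ the value $1/2$. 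The freedom you want exists only for $f$ with $f_* \ne f^*$ on a set of positive measure, which is why Proposition \ref{P:C(X)GW} uses the characteristic function of a \emph{closed nowhere dense} set of positive measure together with Lemma \ref{L:sarason}.

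For (c), the claimed bijection coming from the classical Gleason--Whitney theorem is unjustified: that theorem controls Hahn--Banach extensions of \emph{weak-$*$ continuous} functionals on $H^\infty(\bD)$ only, so it does not rule out that two distinct singular extensions of $\varphi$ to $L^\infty(\bT)$ restrict to the \emph{same} functional on $H^\infty(\bD)$ --- possibly evaluation at $0$ itself. The substantive step is to show the singular extension restricted to $H^\infty(\bD)$ differs from evaluation at $0$; Proposition \ref{P:GWADHinf} does this with an outer function $Q$ satisfying $|Q| = \exp(\chi_K)$, and your argument omits it entirely. Cases (e)--(g) are likewise not proved: multiple representing measures for evaluation at $0$ on $C(\bS_d)$ or $C(\bT^d)$ give you a singular state on the continuous functions agreeing with $\varphi$ on $\A \cap C(X)$, but you still need a mechanism promoting this to a contractive (hence Hahn--Banach) extension of $\varphi$ on all of $\A + C(X)$; this is exactly Lemma \ref{lem:GW_L_infty}, whose approximation hypothesis --- verified via dilations and the (invariant) Poisson integral --- is the real work, and it is absent from your sketch. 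Finally, the proposed fallback through Theorem \ref{T:main2} cannot succeed: that result is an obstruction only when $\A^{**}$ contains no non-trivial projections (as in Example \ref{E:nilpotent}), whereas the biduals of $\rA(\bD)$, $H^\infty(\bD)$, $H^\infty(\bB_d)$ and $H^\infty(\bD^d)$ contain an abundance of non-trivial projections, so no contradiction can be extracted that way.
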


\begin{proof}
  (a) is a special case of Proposition \ref{P:C(X)GW}.

  (b) is Example \ref{E:Linfty}.

  (c) is Proposition \ref{P:GWADHinf}.

  (d) follows from Proposition \ref{P:MFTF}.

  (e) is Proposition \ref{HinfbD}.

  (f) is Proposition \ref{P:HballGW}.

  (g) is Proposition \ref{P:HpolydiscGW}.
\end{proof}

Taken together, these examples make a compelling case that the Gleason--Whitney property is rather rare, at least among commonly studied inclusions of operator algebras coming from function theory and  multivariate operator theory.  
By contrast, an earlier positive result (Corollary \ref{C:compact}) applies in particular to the space of Hankel operators on a reproducing kernel Hilbert space.

Finally, in Section \ref{S:homom} we study absolute continuity for the more structured class of homomorphisms on operator algebras. Theorem \ref{T:staterep} shows how this issue is closely related to the corresponding one for states. We also exhibit a connection between our results and the classical problem of existence of pathological functional calculi, as pioneered in the deep work of Miller--Olin--Thompson \cite{MOT86}.

\section{Lebesgue decompositions and compatibiity}\label{S:compat}

Throughout the paper, given a normed space $\X$ we will tacitly identify it with its canonical image in $\X^{**}$. Likewise, if $\Y\subset \X$ is a subspace, then we regard $\Y^{**}$ as a subspace of $\X^{**}$ upon identifying with $\Y^{\perp\perp}$.

\subsection{Lebesgue decompositions of normed spaces}\label{SS:LebdecompBspace}
In some places, we will work in a setting that is more general than the one described in the introduction.
We now describe this setting.
Let $\W$ be the dual space of some normed space. Let  $\X\subset \W$  be a subspace. A bounded linear functional $\phi:\X\to\bC$ is said to be \emph{absolutely continuous relative to} $\W$ if there is a weak-$*$ continuous linear map $\Phi:\W\to \bC$ extending $\phi$. We denote by $\ac_{\W}(\X)$ the subspace of $\X^*$ comprising all absolutely continuous functionals relative to $\W$. Often, the space $\W$ will be clear from context and we simply speak of absolutely continuous functionals.

We say that $\X$ admits a \emph{Lebesgue decomposition relative to $\W$} if there is a bounded linear idempotent $L:\X^*\to \X^*$ with the property that $\ac_\W(\X)=L\X^*$ and such that
\[
\|\phi\|=\|L\phi\|+\|(I-L)\phi\|, \quad \phi\in \X^*.
\]
In other words, $\X$ admits a Lebesgue decomposition whenever $\ac_\W(\X)$ is an \emph{$L$-summand} of $\X^*$, see \cite{HWW} for background on this notion.
Equivalently, this is saying that $\ac_\W(\X)$ is a norm-closed subspace of $\X^*$ and that there is a subspace $\S\subset \X^*$ with the property that
\begin{equation}\label{Eq:LD}
\X^*=\ac_\W(\X)\oplus_1 \S.
\end{equation}
We will also refer to \eqref{Eq:LD} as the Lebesgue decomposition.
The following sufficient condition for norm-closedness of $\AC_{\mathcal{W}}(\mathcal{X})$
is standard. In some concrete cases, it can also be deduced from \cite[Lemma 1.1]{eschmeier1997}, \cite[Lemma 3.1]{BHM2018}, or \cite[Proposition 4.1]{CT2022ncHenkin}.

\begin{proposition}\label{P:ACclosed}
  Let $\mathcal{W}$ be the dual space of some normed space and let $\mathcal{X} \subset \mathcal{W}$.
If the unit ball of $\mathcal{X}$ is weak-$*$ dense in the unit ball of the weak-$*$ closure of $\mathcal{X}$ in $\mathcal{W}$, then $\AC_{\mathcal{W}}(\mathcal{X})$ is norm-closed in $\mathcal{X}^*$.
\end{proposition}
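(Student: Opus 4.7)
Let $\ol{\X}$ denote the weak-$*$ closure of $\X$ in $\W$. The plan is to first transfer the problem to $\ol{\X}$, where weak-$*$ continuity is more naturally controlled, and then lift back to $\W$. I start with a sequence $(\phi_n)$ in $\AC_\W(\X)$ converging in norm to some $\phi \in \X^*$, with the goal of producing a weak-$*$ continuous extension of $\phi$ to $\W$. For each $n$, pick any weak-$*$ continuous extension $\Phi_n\colon \W \to \bC$ of $\phi_n$ and set $\td\phi_n := \Phi_n|_{\ol{\X}}$. Each $\td\phi_n$ is weak-$*$ continuous on $\ol{\X}$, since $\ol{\X}$ carries the subspace weak-$*$ topology.

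The crucial step is the identity $\|\td\phi_n\|_{\ol{\X}^*} = \|\phi_n\|_{\X^*}$. The inequality $\geq$ is automatic since $\td\phi_n$ extends $\phi_n$. For the reverse inequality, the density hypothesis enters: every $y$ in the unit ball of $\ol{\X}$ is a weak-$*$ limit of a net from the unit ball of $\X$, and weak-$*$ continuity of $\td\phi_n$ then yields $|\td\phi_n(y)| \leq \|\phi_n\|$. In particular, $\|\td\phi_n - \td\phi_m\| = \|\phi_n - \phi_m\|$, so the sequence $(\td\phi_n)$ is norm-Cauchy in $\ol{\X}^*$ and converges in norm to some $\td\phi \in \ol{\X}^*$.

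Because $\ol{\X}$ is weak-$*$ closed in $\W$, it is itself a dual Banach space (predual $\W_*/(\ol{\X})_\perp$, where $\W_*$ is a predual of $\W$), and its weak-$*$ continuous functionals form a norm-closed subspace of $\ol{\X}^*$. Hence the norm limit $\td\phi$ is weak-$*$ continuous on $\ol{\X}$. Lifting $\td\phi$ to a weak-$*$ continuous functional $\Phi$ on $\W$ by the standard duality argument (choose a representative in $\W_*$ of the element of $\W_*/(\ol{\X})_\perp$ corresponding to $\td\phi$) yields $\Phi|_\X = \td\phi|_\X = \phi$, so $\phi \in \AC_\W(\X)$ as required.

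I expect the main subtlety to be the norm identity $\|\td\phi_n\| = \|\phi_n\|$, as this is precisely where the density assumption is needed: without such a hypothesis, different choices of extension $\Phi_n$ could have wildly varying norms, and one would have no way to promote norm convergence of $(\phi_n)$ to norm convergence of the restrictions $(\td\phi_n)$ inside the closed subspace of weak-$*$ continuous functionals. Everything else is formal duality bookkeeping.
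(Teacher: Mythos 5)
Your proof is correct and is essentially the paper's argument: both rest on the observation that the density hypothesis makes restriction an isometry from the weak-$*$ continuous functionals on $\overline{\mathcal{X}}^{w*}$ onto $\AC_\W(\X)$, so that completeness of the predual transfers to $\AC_\W(\X)$. You phrase this as a Cauchy-sequence argument with an explicit restrict-and-lift step through $\overline{\mathcal{X}}^{w*}$, whereas the paper simply replaces $\W$ by that closure and notes the isometric image of a complete space is closed, but the content is the same.
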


\begin{proof}
  By replacing $\mathcal{W}$ with the weak-$*$ closure of $\mathcal{X}$ in $\mathcal{W}$,
  we may assume that the unit ball of $\mathcal{X}$ is weak-$*$ dense in the unit ball of $\mathcal{W}$.
  Let $\mathcal{W}_* \subset \mathcal{W}^*$ be the space of weak-$*$ continuous functionals
  on $\mathcal{W}$.
  Then
  \begin{equation*}
    \mathcal{W}_* \to \mathcal{X}^*, \quad \varphi \mapsto \varphi \big|_{\mathcal{X}},
  \end{equation*}
  is an isometry whose range equals $\AC_{\mathcal{W}}(\mathcal{X})$.
  Since $\mathcal{W}_*$ is complete, the space $\AC_{\mathcal{W}}(\mathcal{X})$
  is complete as well and hence closed.
\end{proof}

We remark that the subspace $\S$ appearing in \eqref{Eq:LD} is uniquely determined \cite[Proposition I.1.2]{HWW}.  Functionals in $\S$ will be said to be \emph{singular relative to $\W$} and we will write $\S=\sing_\W(\X)$. Given $\phi\in \X^*$, we use the notations
\[
\phi_a=L\phi \qand \phi_s= (I-L)\phi.
\]
Singular functionals can be characterized intrinsically as follows.
A special case of the following result already appeared as \cite[Lemma 4.1]{DH2020}.

\begin{proposition}\label{P:singnorm}
 Let $\W$ be the dual space of some normed space and let $\X\subset \W$ be a subspace which admits a Lebesgue decomposition relative to $\W$. Let $\phi\in \X^*$. Then, $\phi\in \sing_\W(\X)$ if and only if there exists
  a net $(x_\alpha)$ in the unit ball of $\X$ that converges to $0$ in the weak-$*$ topology of $\W$ and such that $(\varphi(x_\alpha))$ converges
  to $\|\varphi\|$. 
  If the predual of $\mathcal{W}$ is separable, then the net $(x_\alpha)$ can be replaced with a sequence.
\end{proposition}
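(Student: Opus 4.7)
My plan is as follows. The ``if'' direction proceeds directly by testing the Lebesgue decomposition against the net. Writing $\phi = \phi_a + \phi_s$ with $\phi_a \in \AC_\W(\X)$ and $\phi_s \in \sing_\W(\X)$, let $\Phi_a \in \W^*$ be a weak-$*$ continuous extension of $\phi_a$ to $\W$. Since $x_\alpha \to 0$ weak-$*$ in $\W$, we get $\phi_a(x_\alpha) = \Phi_a(x_\alpha) \to 0$, hence $\phi_s(x_\alpha) \to \|\phi\|$. But $|\phi_s(x_\alpha)| \le \|\phi_s\|$ and $\|\phi\| = \|\phi_a\| + \|\phi_s\|$ by the $L$-summand property, so $\|\phi_a\| = 0$, giving $\phi \in \sing_\W(\X)$.

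The substantive content lies in the ``only if'' direction. My plan is to reduce to the following local assertion: for every basic weak-$*$ open neighborhood $U = \{w \in \W : |\langle w, f_i \rangle| < \delta, \, 1 \le i \le n\}$ of $0$ in $\W$ (with $f_1,\ldots,f_n \in \W_*$) and every $\eps > 0$, there exists $x$ in the unit ball of $\X$ with $x \in U$ and $\phi(x) > \|\phi\| - \eps$. Indexing such choices by the directed set of pairs $(U, \eps)$ (with $U$ ordered by reverse inclusion) then yields the desired net.

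To prove the local assertion, I would consider the finite-codimensional subspace $M = \X \cap \bigcap_{i=1}^n \ker f_i$, which is clearly contained in $U$, and show that $\|\phi|_M\| = \|\phi\|$. The nontrivial inequality $\|\phi|_M\| \ge \|\phi\|$ is the heart of the argument. Pick a Hahn--Banach extension $\tilde\phi \in \X^*$ of $\phi|_M$ with $\|\tilde\phi\| = \|\phi|_M\|$; since $\phi - \tilde\phi$ annihilates the finite-codimensional subspace $M$, it lies in $\spn\{f_1|_\X, \ldots, f_n|_\X\}$, and each $f_i|_\X$ is absolutely continuous because $f_i \in \W_*$. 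Setting $\psi := \phi - \tilde\phi \in \AC_\W(\X)$, the $L$-summand identity applied to the singular functional $\phi$ and the absolutely continuous $-\psi$ yields
\[
\|\tilde\phi\| = \|\phi - \psi\| = \|\phi\| + \|\psi\| \ge \|\phi\|.
\]
With $\|\phi|_M\| = \|\phi\|$ established, one picks $x \in M$ of norm at most $1$ with $|\phi(x)| > \|\phi\| - \eps$, then rotates by a unimodular scalar (which preserves $M$ and the unit ball) so that $\phi(x)$ becomes real and positive.

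For the sequential refinement, I would use that when $\W_*$ is separable, the weak-$*$ topology on the unit ball of $\W$ is metrizable, and a countable neighborhood basis at $0$ can be drawn from sets of the basic form above; performing the construction with $\eps_n = 1/n$ along such a basis produces a sequence. I expect the main obstacle to be precisely the Hahn--Banach/$L$-summand interplay in the ``only if'' direction: the key insight is that any perturbation of $\phi$ that lowers its norm must consist of weak-$*$ continuous corrections, and singularity then forbids such a drop, so testing against finitely many elements of $\W_*$ cannot shrink the norm of $\phi$.
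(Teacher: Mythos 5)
Your proof is correct, and the ``only if'' direction takes a genuinely different route from the paper's. The paper argues globally in the bidual: it picks a norming functional $\Lambda\in\X^{**}$ for $\varphi$, replaces it by $\Lambda\circ(I-L)$ so that $\Lambda$ annihilates $\ac_\W(\X)$, and then invokes Goldstine's theorem to approximate $\Lambda$ by a net in the unit ball of $\X$; weak-$*$ convergence to $0$ in $\W$ falls out because restrictions of predual elements are absolutely continuous and hence killed by $\Lambda$. You instead localize: for finitely many $f_1,\dots,f_n\in\W_*$ you show $\|\varphi|_M\|=\|\varphi\|$ on $M=\X\cap\bigcap_i\ker f_i$, using that any Hahn--Banach extension of $\varphi|_M$ back to $\X$ differs from $\varphi$ by an element of $\spn\{f_i|_\X\}\subset\ac_\W(\X)$, so the $L$-summand identity $\|\varphi-\psi\|=\|\varphi\|+\|\psi\|$ forbids any norm drop; the net is then assembled over pairs (basic neighborhood, $\eps$). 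Both arguments use the $\ell^1$-additivity of the decomposition in the same essential way, but yours avoids Goldstine and the bidual entirely and makes explicit the quantitative fact that imposing finitely many weak-$*$ constraints cannot shrink the norm of a singular functional; the paper's is shorter and produces the net in one stroke. Your ``if'' direction and the sequential refinement under separability of $\W_*$ match the paper's.
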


\begin{proof}
Suppose first that there exists a net $(x_\alpha)$ in the unit ball of $\X$
  which converges to $0$ in the weak-$*$ topology of $\W$ and satisfies $\lim_\alpha \varphi(x_\alpha) = \|\varphi\|$.
Write $\varphi = \varphi_a + \varphi_s$ with $\phi_a\in \ac(\X), \phi_s\in \sing(\X)$. We find
  \begin{equation*}
    \lim_\alpha (\varphi(x_\alpha) - \varphi_s(x_\alpha))= \lim_\alpha \varphi_a(x_\alpha) =0
  \end{equation*}
  whence
  \[
  \lim_\alpha\phi_s(x_\alpha)=\|\phi\|.
  \]
 We infer that $\|\phi_s\|\geq \|\phi\|$. Since $\|\varphi\| = \|\varphi_a\| + \|\varphi_s\|$, it follows that $\varphi_a = 0$ and $\varphi = \varphi_s\in \sing(\X)$.

  Conversely, let $\varphi \in \sing_\W(\X)$. By the Hahn--Banach theorem, there exists
  a  $\Lambda\in \X^{**}$ with norm $1$ such that $\Lambda(\varphi) = \|\varphi\|$.  Upon replacing $\Lambda$ with $\Lambda\circ (I-L)$ if necessary, we may assume that $\ac_\W(\X)\subset \ker \Lambda$. Now, by Goldstine's theorem, there
  exists a net $(x_\alpha)$ in the unit ball of $\X$ such that $(x_\alpha)$  converges to $\Lambda$
  in the weak-$*$ topology of $\X^{**}$. Then 
  \[
  \lim_\alpha \varphi(x_\alpha)=\Lambda(\varphi) = \|\varphi\|,
  \]
  so it remains to show that $(x_\alpha)_\alpha$ converges to $0$ in the weak-$*$ topology of $\W$.
   To this end, let  $\omega$ be a weak-$*$ continuous functional on $\W$.  Then, $\omega|_\X\in \ac_\W(\X)$ so that 
   \[
   0=\Lambda(\omega|_\X)=\lim_\alpha \omega( x_\alpha),
   \]
and the proof of the equivalence is complete.

Finally, suppose that $\mathcal{W}$ is the dual space of a separable normed space.
Then the weak-$*$ topology on the unit ball of $\mathcal{W}$ is given by a metric $d$.
By the preceding paragraph, there exists for each $n \ge 1$ an element $x_n$ in the unit ball of $\mathcal{X}$
such that $d(x_n,0) < \frac{1}{n}$ and $|\varphi(x_n) - \|\varphi\| | < \frac{1}{n}$.
Then $(x_n)$ converges to zero in the weak-$*$ topology of $\mathcal{W}$ and $\lim_{n \to \infty} \varphi(x_n) = \|\varphi\|$.
\end{proof}

\subsection{Compatibility of Lebesgue decompositions}\label{SS:compat}
Let $\W$ be the dual space of some normed space, and let $\X\subset \Y$ be a pair of subspaces of $\W$. Assume that both $\X$ and $\Y$ admit a Lebesgue decomposition. We will say that the Lebesgue decompositions of $\X$ and $\Y$ are \emph{compatible} if, given a functional $\psi\in \Y^*$ that we decompose as $\psi=\psi_a+\psi_s$ with $\psi_a\in \ac_\W(\Y)$ and $\psi_s\in \sing_\W(\Y)$, then we must have that $\psi_a|_\X\in \ac_\W(\X)$ and $\psi_s|_\X\in \sing_\W(\X)$. In other words, if we let $\phi=\psi|_\X$ and write $\phi=\phi_a+\phi_s$ with $\phi_a\in \ac_\W(\X)$ and $\phi_s\in \sing_\W(\X)$, then we have that $\phi_a=\psi_a|_\X$ and $\phi_s=\psi_s|_\X$. Clearly, the condition $\psi_a|_\X\in \ac_\W(\X)$ is automatically satisfied, so compatibility of the Lebesgue decompositions boils down to whether singular functionals on $\Y$ restrict to singular functionals on $\X$; this observation will be utilized below. 

We show next that the converse statement is always true, at least for norm-preserving extensions. Let us first set up some terminology used throughout the paper. Given $\phi\in \X^*$, a functional $\psi\in \Y^*$ is a \emph{Hahn--Banach extension} of $\phi$ if $\psi|_\X=\phi$ and $\|\phi\|=\|\psi\|$. In other words, a Hahn--Banach extension is simply a norm-preserving extension, which always exists by the Hahn--Banach theorem.

\begin{lemma}\label{L:HBsing}
Let $\phi\in \sing_\W(\X)$ and let $\psi\in \Y^*$ be a Hahn--Banach extension of $\phi$. Then, $\psi\in \sing_\W(\Y)$.
\end{lemma}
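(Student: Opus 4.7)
The plan is to invoke the characterization of singular functionals from Proposition \ref{P:singnorm} in both directions: first to extract a witnessing net from the hypothesis on $\phi$, then to feed that same net back in at the level of $\Y$ to conclude that $\psi$ is singular.

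In more detail, since $\phi \in \sing_\W(\X)$ and $\X$ admits a Lebesgue decomposition relative to $\W$, Proposition \ref{P:singnorm} produces a net $(x_\alpha)$ in the unit ball of $\X$ that converges to $0$ in the weak-$*$ topology of $\W$ and satisfies $\phi(x_\alpha) \to \|\phi\|$. Because $\X \subset \Y$, the net $(x_\alpha)$ lies in the unit ball of $\Y$, and the weak-$*$ convergence in $\W$ is the same statement regardless of which subspace we view $(x_\alpha)$ as belonging to. Using that $\psi$ extends $\phi$ and that $\|\psi\| = \|\phi\|$, we obtain
\[
\psi(x_\alpha) = \phi(x_\alpha) \longrightarrow \|\phi\| = \|\psi\|.
\]
Applying the converse direction of Proposition \ref{P:singnorm} to $\Y$ now gives $\psi \in \sing_\W(\Y)$.

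I do not foresee any real obstacle here: the lemma is essentially a bookkeeping consequence of the intrinsic net characterization of singular functionals, together with the two trivial facts that the unit ball of $\X$ sits inside that of $\Y$ and that norm-preservation transfers the limit $\|\phi\|$ to $\|\psi\|$. The only mild point worth mentioning is that this argument presupposes that $\Y$ also admits a Lebesgue decomposition relative to $\W$, which is part of the running setup in Section \ref{SS:compat} in which the lemma is stated.
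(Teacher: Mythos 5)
Your proof is correct and is essentially identical to the paper's own argument: both extract the witnessing net for $\phi$ from Proposition \ref{P:singnorm}, observe that norm-preservation gives $\psi(x_\alpha)\to\|\psi\|$, and apply the same proposition in the reverse direction to $\Y$. Your closing remark about the standing hypothesis that $\Y$ also admits a Lebesgue decomposition is accurate and appropriately noted.
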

\begin{proof}
It follows from Proposition \ref{P:singnorm} that there exists a net $(x_\alpha)$ in the unit ball of $\X$ that converges to $0$ in the weak-$*$ topology of $\W$ and such that $(\varphi(x_\alpha))$ converges
  to $\|\varphi\|$. Since $\|\phi\|=\|\psi\|$, the same lemma then implies in turn that $\psi\in \SG_\W(\Y)$.
\end{proof}

We can now reformulate the compatibility condition of Lebesgue decompositions. 

\begin{theorem}\label{T:Lebcompat}
Let $\W$ be the dual space of some normed space, and let $\X\subset \Y$ be a pair of subspaces of $\W$.
Assume that $\Y$ admits a Lebesgue decomposition. Then, the following statements are equivalent.
  \begin{enumerate}[{\rm (i)}]
         \item The space $\X$ admits a Lebesgue decomposition that is compatible with that of $\Y$.

        \item Let $\psi\in  \X^\perp.$ If we write $\psi= \psi_a + \psi_s$ with $\psi_a\in \ac_\W(\Y)$ and $\psi_s\in \sing_\W(\Y)$, then $\psi_a\in \X^\perp$ and $\psi_s\in \X^\perp$.
     \end{enumerate}
\end{theorem}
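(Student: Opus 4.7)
The plan is to establish (i) $\Rightarrow$ (ii) by a direct unraveling of the definition of compatibility, and conversely to build the Lebesgue decomposition of $\X^*$ by transporting the one on $\Y^*$ via Hahn--Banach extensions, with hypothesis (ii) ensuring well-definedness.

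For (i) $\Rightarrow$ (ii), I would take $\psi \in \X^\perp$ and set $\phi := \psi|_\X = 0$. Compatibility gives $\phi_a = \psi_a|_\X$ and $\phi_s = \psi_s|_\X$; but $\phi = 0$ forces $\phi_a = \phi_s = 0$ by the uniqueness of the Lebesgue decomposition of $\X$, so $\psi_a, \psi_s \in \X^\perp$.

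For (ii) $\Rightarrow$ (i), I would define a map $L : \X^* \to \X^*$ by $L\phi := \psi_a|_\X$, where $\psi \in \Y^*$ is any extension of $\phi$. This is well-defined because two such extensions differ by an element of $\X^\perp$, and by (ii) so do their absolutely continuous parts. Linearity is immediate. To see that $L\X^* = \ac_\W(\X)$: the inclusion $\subseteq$ is clear since $\psi_a$ already extends weak-$*$ continuously to $\W$; for the reverse, any $\phi \in \ac_\W(\X)$ with weak-$*$ continuous extension $\Phi : \W \to \bC$ yields $\Phi|_\Y \in \ac_\W(\Y)$ as an extension of $\phi$ with zero singular part, so $L\phi = \phi$. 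The same identity also shows $L^2 = L$.

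The step I expect to be the main obstacle is the $L$-summand norm identity $\|\phi\| = \|L\phi\| + \|(I-L)\phi\|$. The trick is to pick a \emph{Hahn--Banach} extension $\psi$ of $\phi$ when computing $L\phi$, so that $\|\psi\| = \|\phi\|$ and $\|\psi\| = \|\psi_a\| + \|\psi_s\|$ from the Lebesgue decomposition of $\Y$. Restricting yields
\[
\|L\phi\| + \|(I-L)\phi\| \le \|\psi_a\| + \|\psi_s\| = \|\phi\|,
\]
while the reverse inequality is the triangle inequality. This produces a Lebesgue decomposition $\X^* = \ac_\W(\X) \oplus_1 (I-L)\X^*$, with $(I-L)\X^* = \sing_\W(\X)$ by the uniqueness statement \cite[Proposition I.1.2]{HWW}. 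Compatibility is then automatic from the construction, since by definition $L(\psi|_\X) = \psi_a|_\X$ and $(I-L)(\psi|_\X) = \psi_s|_\X$ for every $\psi \in \Y^*$.
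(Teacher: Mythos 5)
Your proposal is correct and takes essentially the same route as the paper: the paper phrases the induced idempotent via the identification $\X^*\cong\Y^*/\X^\perp$ (with hypothesis (ii) guaranteeing that $L$ descends to the quotient), which is exactly your well-definedness check, and it uses the same Hahn--Banach extension trick to obtain the $\ell^1$ norm identity. No gaps.
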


\begin{proof}
   (i) $\Rightarrow$ (ii): Let $\psi\in \Y^*$, which we write as $\psi=\psi_a+\psi_s$ with $\psi_a\in \ac_\W(\Y)$ and $\psi_s\in \sing_\W(\Y)$. Further,  put $\phi=\psi|_\X$ and write $\phi=\phi_a+\phi_s$ with $\phi_a\in \ac_\W(\X)$ and $\phi_s\in \sing_\W(\X)$. Since the Lebesgue decompositions are assumed to be compatible,  we see that $\phi_a=\psi_a|_\X$ and $\phi_s=\psi_s|_\X$. If we assume in addition that $\psi\in \X^\perp$, then $\phi=0$, so $\psi_a|_\X=\phi_a=0$ and $\psi_s|_\X=\phi_s=0$. 

  (ii) $\Rightarrow$ (i):
  Let $L: \mathcal{Y}^* \to \mathcal{Y}^*$ be the idempotent in the definition of Lebesgue decomposition.
  As usual, we identify $\mathcal{X}^*$ with $\mathcal{Y}^*/\mathcal{X}^\bot$.
  Then by assumption, $L$ induces a bounded linear idempotent
  $\widetilde{L}: \mathcal{X}^* \to \mathcal{X}^*$ satisfying
  \begin{equation*}
    \widetilde{L} (\psi \big|_{\mathcal{X}}) = (L \psi) \big|_{\mathcal{X}}, \quad \psi \in \mathcal{Y}^*.
  \end{equation*}
  Since a functional on $\mathcal{X}$ belongs to $\AC_{\mathcal{W}}(\mathcal{X})$
  if and only if it is the restriction to $\mathcal{X}$ of a functional in $\AC_{\mathcal{W}}(\mathcal{Y})$,
  we have $\widetilde{L} \mathcal{X}^* = \AC_{\mathcal{W}}(\mathcal{X})$.
  Finally, if $\varphi \in \mathcal{X}^*$, let $\psi$ be a Hahn--Banach extension of $\varphi$ to $\mathcal{Y}$.
  Then
  \begin{equation*}
    \|\varphi\| = \|\psi\| = \|L \psi\| + \|(I - L) \psi\| \ge \|\widetilde{L} \varphi\|
    + \|(I - \widetilde{L}) \varphi\|.
  \end{equation*}
  The reverse inequality follows from the triangle inequality.
  Hence $\mathcal{X}$ admits a Lebesgue decomposition, which is compatible with that of $\mathcal{Y}$ by construction.
  \end{proof}

We mention here that condition (ii) of the previous result corresponds to a general version of the statement of the classical F. and M. Riesz theorem.
The reader may find it instructive to compare these ideas with \cite[Proposition I.1.16]{HWW}.

\section{Lebesgue decompositions for operator algebras}\label{S:OA}

\subsection{The second dual of an operator algebra}\label{SS:bidualopalg}
Since our focus in the rest of the paper will be on operator algebras, we will often need to work with the operator algebraic structure of biduals. We briefly recall some of the key points below, and refer the reader to \cite{BLM2004} for further details.

Let $\A$ be an operator algebra. The dual space $\A^*$ carries the structure of an $\A^{**}$-bimodule, as follows. Given $b\in \A$ and $\phi\in \A^*$ we define $b \phi\in \A^*$ and $\phi b\in \A^*$ as
\[
(b\phi)(a)=\phi(ab), \quad (\phi b)(a)=\phi(ba)
\]
for every $a\in \A$. Next, given $\phi\in \A^*$ and $\Lambda\in \A^{**}$ we define $\phi \Lambda\in \A^{*}$ and $\Lambda\phi\in \A^{*}$ as
\[
(\phi \Lambda)(a)=\Lambda(a\phi), \quad (\Lambda\phi)(a)=\Lambda(\phi a)
\]
for every $a\in \A$.
In some places, we also write $\Lambda \cdot \varphi = \Lambda \varphi$ and $\varphi \cdot \Lambda = \varphi \Lambda$
to avoid confusion.
Note that these products are compatible with the canonical embedding $\mathcal{A} \hookrightarrow A^{**}$,
in the sense that if $\varphi \in \mathcal{A}^*$ and $a \in \mathcal{A}$, then
$\varphi \widehat{a} = \varphi a$ and $\widehat{a} \varphi = a \varphi$, where $\widehat{a} \in \mathcal{A}^{**}$
denotes the image of $a$ under the canonical embedding.
Finally, given $\Lambda,\Xi\in \A^{**}$ recall that their \emph{Arens products} $\Lambda\cdot_\lambda \Xi\in \A^{**}$ and $\Lambda\cdot_\mu\Xi \in \A^{**}$ are defined as
\[
(\Lambda\cdot _\mu\Xi)(\phi)=\Lambda(\Xi\phi), \quad (\Lambda\cdot _\lambda\Xi)(\phi)=\Xi(\phi \Lambda).
\]
Operator algebras are known to be Arens regular \cite[Corollary 2.5.4]{BLM2004}, so that the two products above agree and we denote them simply by $\Lambda  \Xi$. Moreover, the Banach algebra $\A^{**}$ equipped with the Arens product is completely isometrically isomorphic and weak-$*$ homeomorphic to a weak-$*$ closed subalgebra of Hilbert space operators \cite[Corollary 2.5.6]{BLM2004}, and thus we will view $\A^{**}$ as a weak-$*$ closed operator algebra upon making this identification. It is readily verified that in fact $\A$ is a subalgebra of $\A^{**}$ upon making the usual identifications. If $\A$ happens to be a $\rC^*$-algebra, then $\A^{**}$ is a von Neumann algebra \cite[Paragraph 2.5.3 and Theorem A.5.6]{BLM2004}. 

\subsection{Lebesgue projections}\label{SS:Lebproj}

In our setting of operator algebras, we will assume that the ambient dual space $\mathcal{W}$ is a von Neumann algebra. In particular, this ensures that multiplication is separately weak-$*$ continuous
with respect to the weak-$*$ topology on $\mathcal{W}$.
We start with a basic property of absolutely continuous functionals on operator algebras.

\begin{lemma}\label{L:ideal}
  Let $\W$ be a von Neumann algebra. Let $\A\subset \W$ be an operator algebra. Then, $\ac_\W(\A)^\perp$ is a weak-$*$ closed ideal in $\A^{**}$ and the norm closure of $\ac_\W(\A)$ is a $\mathcal{A}^{**}$-submodule of $\A^*$.
\end{lemma}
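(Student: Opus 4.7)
The plan is to first establish the claim at the level of $\A$: that the natural action of $\A$ preserves $\ac_\W(\A)$. To this end, I would fix $\phi \in \ac_\W(\A)$ with weak-$*$ continuous extension $\widetilde{\phi}: \W \to \bC$. For $b \in \A \subseteq \W$, the functional $a \mapsto \widetilde{\phi}(ab)$ on $\W$ is weak-$*$ continuous because multiplication in the von Neumann algebra $\W$ is separately weak-$*$ continuous; restricted to $\A$ it recovers $b\phi$, so $b\phi \in \ac_\W(\A)$, and symmetrically $\phi b \in \ac_\W(\A)$.

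Next, I would bootstrap to $\A^{**}$ via a Goldstine argument. For $\Lambda \in \A^{**}$, choose a bounded net $(b_\alpha) \subset \A$ with $\widehat{b_\alpha} \to \Lambda$ weak-$*$ in $\A^{**}$. Unravelling the definitions from Section~\ref{SS:bidualopalg}, for any $\phi \in \A^*$ and $a \in \A$,
\[
(\Lambda \phi)(a) = \Lambda(\phi a) = \lim_\alpha (\phi a)(b_\alpha) = \lim_\alpha \phi(ab_\alpha) = \lim_\alpha (b_\alpha \phi)(a),
\]
so $b_\alpha \phi \to \Lambda \phi$ in the weak topology of $\A^*$. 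For $\phi \in \ac_\W(\A)$, the previous step places each $b_\alpha \phi$ in $\ac_\W(\A)$, so $\Lambda \phi$ lies in the weak closure of the convex set $\ac_\W(\A)$, which coincides with its norm closure by Mazur's theorem. The bound $\|\Lambda \phi\| \le \|\Lambda\|\|\phi\|$ extends this to every $\phi \in \overline{\ac_\W(\A)}^{\|\cdot\|}$, and the right action is analogous; thus $\overline{\ac_\W(\A)}^{\|\cdot\|}$ is a two-sided $\A^{**}$-submodule of $\A^*$.

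Finally, since every element of $\A^{**}$ is norm-continuous on $\A^*$, the annihilator $\ac_\W(\A)^\perp$ equals $(\overline{\ac_\W(\A)}^{\|\cdot\|})^\perp$, which is weak-$*$ closed as an annihilator in a dual space. For $\Lambda \in \ac_\W(\A)^\perp$, $\Xi \in \A^{**}$, and $\phi \in \ac_\W(\A)$, Arens regularity lets me compute $(\Lambda\Xi)(\phi) = \Lambda(\Xi\phi)$ via the $\mu$-product and $(\Xi\Lambda)(\phi) = \Lambda(\phi\Xi)$ via the $\lambda$-product; the submodule property places both $\Xi\phi$ and $\phi\Xi$ in $\overline{\ac_\W(\A)}^{\|\cdot\|}$, on which $\Lambda$ vanishes, so both $\Lambda\Xi$ and $\Xi\Lambda$ lie in $\ac_\W(\A)^\perp$. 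I expect no serious obstacle here; the main delicate point is selecting the correct Arens product in each instance so that $\Lambda$ ends up on the outside and can be invoked to kill the value.
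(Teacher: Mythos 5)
Your overall strategy---first showing the $\A$-action preserves $\ac_\W(\A)$ via separate weak-$*$ continuity in $\W$, then a Goldstine bootstrap to $\A^{**}$, then deducing the ideal property from the submodule property---is essentially the paper's, and the first and last steps are sound. The gap is in the bootstrap. Your displayed computation tests $b_\alpha\phi-\Lambda\phi$ only against elements $a\in\A$, so what it actually establishes is convergence of $(b_\alpha\phi)$ to $\Lambda\phi$ in $\sigma(\A^*,\A)$, i.e.\ the weak-$*$ topology of $\A^*$ as the dual of $\A$---not the weak topology $\sigma(\A^*,\A^{**})$. Mazur's theorem identifies the \emph{weak} closure of a convex set with its norm closure; it says nothing about the weak-$*$ closure, which for a norm-closed subspace of a dual space can be strictly larger (e.g.\ $\{x\in\ell^1:\sum_n x_n=0\}$ is norm closed but weak-$*$ dense in $\ell^1=c_0^*$). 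So, as written, the conclusion $\Lambda\phi\in\overline{\ac_\W(\A)}^{\|\cdot\|}$ does not follow.

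The step is repairable, and the repair is precisely where Arens regularity must enter (you invoke it only later, for the ideal part). For arbitrary $\Xi\in\A^{**}$ one has, directly from the definition of the module action, $\Xi(b_\alpha\phi)=(\phi\Xi)(b_\alpha)=\widehat{b_\alpha}(\phi\Xi)\to\Lambda(\phi\Xi)=(\Xi\cdot_\lambda\Lambda)(\phi)$, and Arens regularity identifies this with $(\Xi\cdot_\mu\Lambda)(\phi)=\Xi(\Lambda\phi)$; hence the convergence is in fact weak and Mazur then applies. Alternatively---and this is what the paper does---bypass Mazur altogether: run the same computation with $\Xi\in\ac_\W(\A)^\perp$, so that $\Xi(b_\alpha\phi)=0$ for all $\alpha$ and therefore $\Xi(\Lambda\phi)=(\Xi\Lambda)(\phi)=0$. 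Letting $\Xi$ range over $\ac_\W(\A)^\perp$ gives $\Lambda\phi\in(\ac_\W(\A)^\perp)_\perp$, which equals the norm closure of $\ac_\W(\A)$ by Hahn--Banach, while letting $\phi$ range over $\ac_\W(\A)$ gives $\Xi\Lambda\in\ac_\W(\A)^\perp$; a single computation thus yields both the submodule and the ideal statements at once.
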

\begin{proof}
It is clear that $\ac_\W(\A)^\perp$ is a weak-$*$ closed subspace of $\A^{**}$. Let $\Lambda\in \A^{**}, \Xi\in \ac_\W(\A)^\perp$ and $\phi\in \ac_\W(\A)$. By Goldstine's theorem, we can find a bounded net $(a_\alpha)$ in $\A$ converging to $\Lambda$ in the weak-$*$ topology of $\A^{**}$. Now, because multiplication is separately weak-$*$ continuous, we infer that $a_\alpha\phi,\phi a_\alpha\in \ac_\W(\A)$ and so
$
\Xi(a_\alpha\phi)=\Xi(\phi a_\alpha)=0
$
for every $\alpha$. Consequently,
\[
0=\lim_\alpha \Xi(a_\alpha\phi)=\lim_\alpha (\Xi a_\alpha)(\phi)=(\Xi\Lambda)(\phi)=\Xi(\Lambda \phi)
\]
and
\[
0=\lim_\alpha \Xi(\phi a_\alpha)=\lim_\alpha (a_\alpha \Xi)(\phi)=(\Lambda\Xi)(\phi)=\Xi(\phi\Lambda).
\]
We conclude that $\Xi\Lambda,\Lambda\Xi\in \ac_\W(\A)^\perp$ and $\Lambda\phi,\phi\Lambda\in (\ac_\W(\A)^\perp)_\perp$. Therefore, $\ac_\W(\A)^\perp$ is an ideal while $(\ac_\W(\A)^\perp)_\perp$ is a submodule. Finally, the Hahn--Banach theorem implies that $(\ac_\W(\A)^\perp)_\perp$ is simply the norm closure of $\ac_\W(\A)$.
\end{proof}

Next, we record a standard fact.

\begin{lemma}
  \label{L:proj_lebesgue_dec}
  Let $\mathcal{W}$ be a von Neumann algebra and let $\mathcal{A} \subset \mathcal{W}$ be a unital subalgebra.
  Let $\fz \in \mathcal{A}^{**}$ be a central projection.
  Then 
  \[
    L: \mathcal{A}^* \to \mathcal{A}^*, \quad \varphi \mapsto \fz \cdot \varphi,
  \]
  defines a bounded linear idempotent with
  \begin{equation*}
    \|\varphi\| = \|L \varphi\| + \|(I - L) \varphi\|, \quad \varphi \in \mathcal{A}^{**}.
  \end{equation*}
\end{lemma}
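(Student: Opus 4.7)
The strategy is to exploit the $\ell^\infty$ direct sum decomposition of $\A^{**}$ induced by the central projection $\fz$, and then dualize it to produce the claimed $\ell^1$ identity on $\A^*$. Concretely, since $\A^{**}$ is realized as a weak-$*$ closed operator algebra on some Hilbert space $H$ via Arens regularity (as recalled in Subsection \ref{SS:bidualopalg}), the self-adjoint central projection $\fz$ splits $H=\fz H\oplus(1-\fz)H$ orthogonally, and this yields the norm identity
\[
\|\Lambda\|=\max\bigl(\|\fz\Lambda\|,\|(1-\fz)\Lambda\|\bigr),\qquad \Lambda\in\A^{**}.
\]

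I would begin with the easy half: $L$ is linear and satisfies $\|L\|\leq\|\fz\|\leq 1$, while idempotency follows from $\fz^2=\fz$ together with the associativity of the $\A^{**}$-bimodule action on $\A^*$. Unitality of $\A$ gives $(I-L)\varphi=(1-\fz)\cdot\varphi$, and the triangle inequality immediately delivers $\|\varphi\|\leq\|L\varphi\|+\|(I-L)\varphi\|$.

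The substantive content is the reverse inequality. Fix $\eps>0$ and, by the Hahn--Banach theorem, pick $\Lambda_1,\Lambda_2\in\A^{**}$ of norm at most one with $\Lambda_1(L\varphi)\geq\|L\varphi\|-\eps$ and $\Lambda_2((I-L)\varphi)\geq\|(I-L)\varphi\|-\eps$, after rotating by unimodular scalars. Consider $\Lambda:=\fz\Lambda_1+(1-\fz)\Lambda_2\in\A^{**}$. Since the summands lie in $\fz\A^{**}$ and $(1-\fz)\A^{**}$ respectively and each has norm at most one, the displayed $\ell^\infty$ identity above gives $\|\Lambda\|\leq 1$. Centrality of $\fz$ together with Arens regularity yields
\[
(\fz\Lambda_1)(\varphi)=\Lambda_1(\fz\cdot\varphi)=\Lambda_1(L\varphi),
\]
and analogously for the second summand. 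Therefore $\|\varphi\|\geq|\Lambda(\varphi)|\geq\|L\varphi\|+\|(I-L)\varphi\|-2\eps$, and letting $\eps\downarrow 0$ closes the argument.

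I expect the main technical obstacle to be the bookkeeping surrounding the module actions: keeping straight the distinction between left and right actions, and verifying the identity $(\fz\Lambda_j)(\varphi)=\Lambda_j(\fz\cdot\varphi)$ through the interplay of the two Arens products and the centrality of $\fz$. Beyond that, the argument is essentially the standard observation that the predual of an $\ell^\infty$ direct sum is an $\ell^1$ direct sum, applied to the decomposition of $\A^{**}$ induced by $\fz$.
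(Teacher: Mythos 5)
Your proof is correct and follows essentially the same route as the paper: both arguments obtain the nontrivial inequality $\|L\varphi\|+\|(I-L)\varphi\|\le\|\varphi\|$ by producing (approximately) norming functionals $\Lambda_1,\Lambda_2\in\A^{**}$ via Hahn--Banach, gluing them with the central projection into a single contraction, and pairing with $\varphi$. The only cosmetic differences are that the paper takes exact norming functionals and simply asserts that $\Lambda\fz+\Xi(I-\fz)$ is a contraction because $\fz$ is a central projection, whereas you justify this contractivity explicitly through the spatial $\ell^\infty$-decomposition of $\A^{**}$ induced by $\fz$.
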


\begin{proof}
  The fact that $L$ is a linear idempotent follows from the fact that the action of $\mathcal{A}^{**}$
  on $\mathcal{A}^*$ is indeed a module action.
  If $\varphi \in \mathcal{A}^*$, then by the Hahn--Banach theorem,
  there exist norm one elements $\Lambda,\Xi \in \mathcal{A}^{**}$ with
  $\Lambda (L \varphi) = \|L \varphi\|$ and $\Xi( (I - L) \varphi) = \|(I - L) \varphi\|$.
  Hence
  \begin{equation*}
    \|L \varphi\| + \|(I - L) \varphi\|
    = \Lambda( \fz \cdot \varphi) + \Xi( (I - \fz) \cdot \varphi)
    = (\Lambda \fz + \Xi (I - \fz)) (\varphi) \le \|\varphi\|,
  \end{equation*}
  where the inequality follows from the fact that $\fz$ is a central projection.
  The reverse inequality is simply the triangle inequality.
\end{proof}

We are now ready to prove the announced result about Lebesgue decomposition
in operator algebras. The statement is reminiscent of the investigations
carried out in  \cite[Section V.3]{HWW}. It is likely known to experts, but we do not have an explicit reference.
\begin{theorem}\label{T:Lebproj}
Let $\W$ be a von Neumann algebra and let $\mathcal{A} \subset \mathcal{W}$ be a unital subalgebra. Then, the following statements are equivalent.
\begin{enumerate}[{\rm (i)}]
  \item The algebra $\mathcal{A}$ admits a Lebesgue decomposition.
\item There exists a central projection $\fz \in \mathcal{A}^{**}$ so that
 $\ac_{\mathcal{W}}(\A) = \fz \mathcal{A}^*$.
\item The space $\ac_{\mathcal{W}}(\mathcal{A})$ is closed and there exists a central projection $\fz \in \mathcal{A}^{**}$ so that $\ac_{\mathcal{W}}(\mathcal{A})^\bot = (I - \fz) \mathcal{A}^{**}$.
\end{enumerate}
In this case, the projections in {\rm (ii)} and {\rm (iii)} agree, and
$\sing_{\mathcal{W}}(\mathcal{A}) = (I - \fz) \mathcal{A}^*$.
Moreover, the Lebesgue decomposition is given by
\begin{equation*}
  \varphi = \fz \cdot \varphi + (I- \fz) \cdot \varphi, \quad \varphi \in \mathcal{A}^*.
\end{equation*}
\end{theorem}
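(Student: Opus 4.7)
The strategy is to establish the three equivalences, from which the final assertions about $\sing_{\W}(\A)$ and the formula $\varphi = \fz\varphi + (I-\fz)\varphi$ follow from uniqueness of the $L$-summand decomposition. The implication (ii) $\Rightarrow$ (i) is immediate from Lemma \ref{L:proj_lebesgue_dec} applied to the idempotent $\varphi \mapsto \fz\varphi$. For (ii) $\Leftrightarrow$ (iii), the key point is that $\fz\A^*$ is automatically norm-closed as the range of a bounded idempotent, combined with the Arens calculation showing that $\Lambda \in (\fz\A^*)^\perp$ iff $(\Lambda\fz)(\varphi) = 0$ for all $\varphi \in \A^*$ iff $\Lambda\fz = 0$ iff $\Lambda \in (I-\fz)\A^{**}$, where the last equivalence uses the centrality of $\fz$; the bipolar theorem then closes the loop.

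The substantive implication is (i) $\Rightarrow$ (ii). The natural candidate is $\fz := L^*(\hat{1}) \in \A^{**}$, where $\hat{1}$ is the canonical image of the identity in $\A^{**}$ and $L$ denotes the $L$-projection arising from the given Lebesgue decomposition. Unwinding definitions gives $\fz(\varphi) = \varphi_a(1)$, so that $\fz$ vanishes on $\sing_{\W}(\A)$ (hence $\fz \in \sing_{\W}(\A)^\perp$), while $\hat{1} - \fz$ vanishes on $\ac_{\W}(\A)$ (hence $\hat{1} - \fz \in \ac_{\W}(\A)^\perp$). By Lemma \ref{L:ideal}, $\ac_{\W}(\A)^\perp$ is already a weak-$*$ closed two-sided ideal of $\A^{**}$. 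If, additionally, $\sing_{\W}(\A)^\perp$ can be shown to be a two-sided ideal, or equivalently, if $\sing_{\W}(\A)$ is an $\A^{**}$-bimodule, then $\A^{**}$ decomposes as the direct sum of these two ideals. In this case, $\fz$ becomes the identity element of the unital subalgebra $\sing_{\W}(\A)^\perp$, so $\fz$ is idempotent, and centrality is automatic because multiplication by $\fz$ on either side coincides with the projection of $\A^{**}$ onto $\sing_{\W}(\A)^\perp$ along $\ac_{\W}(\A)^\perp$. The identity $L\varphi = \fz\varphi$ is then verified by splitting $\varphi = \varphi_a + \varphi_s$ and using the bimodule structure on each piece: for instance $(\fz\varphi)(a) = \fz(\varphi a) = (\varphi a)_a(1) = (\varphi_a a)(1) = \varphi_a(a)$, where the third equality uses that $\varphi_s a \in \sing_\W(\A)$.

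The principal obstacle I anticipate is precisely the step of establishing that $\sing_{\W}(\A)$ is closed under the $\A^{**}$-bimodule action. My plan is to leverage Proposition \ref{P:singnorm}: given $\varphi \in \sing_{\W}(\A)$ and a witnessing net $(x_\alpha)$ in the unit ball of $\A$ with $x_\alpha \to 0$ in the weak-$*$ topology of $\W$ and $\varphi(x_\alpha) \to \|\varphi\|$, the separate weak-$*$ continuity of multiplication in $\W$ ensures that the translated nets $a x_\alpha$ and $x_\alpha a$ remain in the ball of radius $\|a\|$ and also tend to $0$ weak-$*$. The delicate task is to combine this with the $L$-summand identity $\|\psi\| = \|\psi_a\| + \|\psi_s\|$ on $\A^*$, together with a Hahn--Banach/Goldstine argument at the level of $\A^{**}$, to engineer witnessing nets for $a\varphi$ and $\varphi a$ that achieve the \emph{full} norms $\|a\varphi\|$ and $\|\varphi a\|$, rather than only the norms of their singular parts (which is all a naive Hahn--Banach extension in $\ac_{\W}(\A)^\perp$ would give). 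Once the bimodule property is established for the $\A$-action, it extends to the full $\A^{**}$-action by the weak-$*$ density of $\A$ in $\A^{**}$ and the separate weak-$*$ continuity of the Arens product.
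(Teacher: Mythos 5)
Your reductions are sound as far as they go: (ii) $\Rightarrow$ (i) via Lemma \ref{L:proj_lebesgue_dec}, the bipolar/Arens calculation for (ii) $\Leftrightarrow$ (iii), the choice of candidate $\fz = L^*(\hat{1})$, and the observation that once $\A^{**}$ splits as a direct sum of the two-sided ideals $\ac_\W(\A)^\perp$ and $\sing_\W(\A)^\perp$, the element $\fz$ is automatically a central idempotent implementing $L$. But the argument has a genuine gap exactly where you flag it: you never establish that $\sing_\W(\A)$ is an $\A^{**}$-submodule of $\A^*$ (equivalently, that $\sing_\W(\A)^\perp$ is an ideal), and the entire implication (i) $\Rightarrow$ (ii) rests on this. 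The witnessing-net plan is unlikely to close the gap as stated: Proposition \ref{P:singnorm} requires a net $(y_\beta)$ in the unit ball with $y_\beta \to 0$ weak-$*$ and $(a\varphi)(y_\beta) = \varphi(y_\beta a) \to \|a\varphi\|$, and the translated net $(x_\alpha a)$ only gives you $\varphi(x_\alpha a^2)$-type quantities with no control relating them to $\|a\varphi\| = \sup_{\|x\|\le 1}|\varphi(xa)|$; the elements nearly attaining this supremum need have nothing to do with the witnessing net for $\varphi$. Note also that Lemma \ref{L:ideal} alone (that $\ac_\W(\A)^\perp$ is a weak-$*$ closed two-sided ideal) does not rescue you, since a weak-$*$ closed two-sided ideal in a general unital dual operator algebra need not be of the form $(I-\fz)\A^{**}$ for a central projection $\fz$ (consider the strictly upper triangular matrices inside the upper triangular $2\times 2$ matrices).

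The missing ingredient is structural, and it is precisely what the paper's proof invokes: since $\ac_\W(\A)$ is an $L$-summand of $\A^*$, its annihilator $\ac_\W(\A)^\perp$ is an $M$-summand of the unital operator algebra $\A^{**}$, and every $M$-summand of a unital operator algebra is generated by a central projection (\cite[Theorem 4.8.5 2)]{BLM2004}; this is the theory of inner $M$-ideals from \cite[Section V.3]{HWW}). That theorem is nontrivial -- it is the reason the statement is true -- and some version of it (or of the submodule property you isolate, which is equivalent to it in this setting) must be either cited or proved. Everything else in your outline then goes through.
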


\begin{proof}
  (i) $\Rightarrow$ (iii)
  Since $\mathcal{A}$ admits a Lebesgue decomposition, $\AC_{\mathcal{W}}(\mathcal{A})$
  is an $L$-summand in $\mathcal{A}^*$, so $\AC_{\mathcal{W}}(\mathcal{A})^{\bot}$
  is an $M$-summand (also known as $\ell^\infty$-summand) in $\mathcal{A}^{**}$.
  Every $M$-summand in a unital operator algebra is generated
  by a central projection in the algebra; see for instance \cite[Theorem 4.8.5 2)]{BLM2004}.
  Thus, (iii) holds.

  (iii) $\Rightarrow$ (ii)
  Let $\fz$ be the projection in (iii) and let $\varphi \in \mathcal{A}^*$.
  By the Hahn--Banach theorem, $\varphi \in \ac_{\mathcal{W}}(\mathcal{A})$ if and only if
  $\Lambda(\varphi) = 0$ for all $\Lambda \in \ac_{\mathcal{W}}(\mathcal{A})^\bot$,
  which happens if and only if $(\Lambda (I - \fz)) (\varphi) = 0$
  for all $\Lambda \in \mathcal{A}^{**}$, which in turn is equivalent
  to $(I - \fz) \cdot \varphi = 0$. Hence
  $\AC_{\mathcal{W}}(\mathcal{A}) = \fz \mathcal{A}^*$.

  (ii) $\Rightarrow$ (i) It follows from Lemma \ref{L:proj_lebesgue_dec} that
  the map
  \begin{equation*}
    L: \mathcal{A}^* \to \mathcal{A}^*, \quad \varphi \mapsto \fz \cdot \varphi,
  \end{equation*}
  yields the the Lebesgue decomposition of $\mathcal{A}$.
  In particular, $\SG_{\mathcal{W}}(\mathcal{A}) = (I - L) \mathcal{A}^* = (I - \fz) \mathcal{A}^*$.

  It is clear that the projections in (ii) and (iii) are unique, so the additional statements were already proven above.
\end{proof}

The projection $\fz\in \A^{**}$ in the previous theorem will be referred to as the \emph{Lebesgue projection} of $\A$.

We can now provide the details underlying a statement made in the introduction.

\begin{lemma}\label{L:C*decomp}
 Let $\W$ be a von Neumann algebra and let $\fT\subset \W$ be a weak-$*$ dense unital $\rC^*$-subalgebra. Then, $\fT$ admits a Lebesgue decomposition relative to $\W$.
\end{lemma}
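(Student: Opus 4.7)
My plan is to verify condition (iii) of Theorem \ref{T:Lebproj} for $\fT$, namely to exhibit a central projection $\fz \in \fT^{**}$ such that $\AC_\W(\fT)^\perp = (I-\fz)\fT^{**}$, and to check that $\AC_\W(\fT)$ is norm-closed. The existence of the Lebesgue decomposition will then follow immediately from Theorem \ref{T:Lebproj}.

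For norm-closedness, I would appeal to Proposition \ref{P:ACclosed}: since $\fT$ is a unital $\rC^*$-subalgebra that is weak-$*$ dense in $\W$, Kaplansky's density theorem ensures that the unit ball of $\fT$ is weak-$*$ dense in the unit ball of $\W$. Thus Proposition \ref{P:ACclosed} applies and $\AC_\W(\fT)$ is norm-closed in $\fT^*$.

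For the central projection, recall from Subsection \ref{SS:bidualopalg} that since $\fT$ is a $\rC^*$-algebra, its bidual $\fT^{**}$ equipped with the Arens product is a von Neumann algebra. By Lemma \ref{L:ideal}, $\AC_\W(\fT)^\perp$ is a weak-$*$ closed two-sided ideal in $\fT^{**}$. The key structural fact I would then invoke is the standard result that every weak-$*$ closed two-sided ideal in a von Neumann algebra $\M$ is of the form $(I-\fz)\M$ for a unique central projection $\fz \in \M$. Applied here, this produces a central projection $\fz \in \fT^{**}$ with $\AC_\W(\fT)^\perp = (I-\fz)\fT^{**}$, which is exactly condition (iii) of Theorem \ref{T:Lebproj}.

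There is no real obstacle here beyond assembling the ingredients; the proof is essentially a three-line verification once one recognizes that (a) Kaplansky density gives the hypothesis of Proposition \ref{P:ACclosed}, and (b) $\fT^{**}$ being a von Neumann algebra allows one to decompose weak-$*$ closed ideals via central projections. The only mildly delicate point is to confirm that $\fz$ obtained from the ideal structure really is the Lebesgue projection in the sense of Theorem \ref{T:Lebproj}, but this is exactly the content of the implication (iii) $\Rightarrow$ (i) already established there.
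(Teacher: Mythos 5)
Your proposal is correct and follows essentially the same route as the paper: Kaplansky density plus Proposition \ref{P:ACclosed} for norm-closedness, Lemma \ref{L:ideal} to see that $\AC_\W(\fT)^\perp$ is a weak-$*$ closed two-sided ideal in the von Neumann algebra $\fT^{**}$, the standard structure theorem for such ideals (the paper cites Sakai for this) to produce the central projection, and Theorem \ref{T:Lebproj} to conclude. No gaps.
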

\begin{proof}
First, note that Kaplansky's density theorem \cite[Theorem I.7.3]{davidson1996} implies that the unit ball of $\fT$ is weak-$*$ dense in that of $\W$. We may thus apply Proposition \ref{P:ACclosed} to see that the space $\AC_\W(\fT)$ is norm closed in $\fT^{*}$. Next, Lemma \ref{L:ideal} shows that $\AC_\W(\fT)^\perp$ is a weak-$*$ closed ideal in $\fT^{**}$. An application of \cite[Proposition 1.10.5]{sakai1971} yields the existence of a Lebesgue projection for $\fT$. The desired statement then follows from Theorem \ref{T:Lebproj}.
\end{proof}

For general operator algebras, there is no analogue of the previous result.
Nevertheless, if a Lebesgue decomposition exists, then it has to be compatible with the algebraic structure,
and the decomposition is necessarily implemented by a projection in the second dual, by virtue of Theorem \ref{T:Lebproj}.

We also record the following consequence for the existence of a Lebesgue decomposition on a smaller algebra,
given a Lebesgue decomposition on a larger algebra.

\begin{proposition}
  \label{prop:Lebesgue_proj_smaller_alg}
  Let $\mathcal{W}$ be a von Neumann algebra and let $\mathcal{A} \subset \mathcal{B} \subset \mathcal{W}$
  be unital subalgebras. Assume that $\mathcal{B}$ admits a Lebesgue decomposition
  and let $\fz_{\mathcal{B}} \in \mathcal{B}^{**}$ be the corresponding Lebesgue projection.
  Then, the following assertions are equivalent.
  \begin{enumerate}[\rm (i)]
    \item The algebra $\mathcal{A}$ admits a Lebesgue decomposition that is compatible with that of $\mathcal{B}$.
    \item We have $\fz_{\mathcal{B}} \in \mathcal{A}^{\bot \bot}$.
  \end{enumerate}
\end{proposition}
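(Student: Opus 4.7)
The plan is to reformulate compatibility using Theorem \ref{T:Lebcompat} and then translate everything into the language of the central projection supplied by Theorem \ref{T:Lebproj}. Applied to $\mathcal{A}\subset \mathcal{B}$, Theorem \ref{T:Lebcompat} shows that (i) is equivalent to the assertion that for every $\psi \in \mathcal{A}^\perp \subset \mathcal{B}^*$, both $\fz_{\mathcal{B}} \cdot \psi$ and $(I-\fz_{\mathcal{B}}) \cdot \psi$ again belong to $\mathcal{A}^\perp$; by linearity it suffices to track the first summand. So the task reduces to comparing the condition $\fz_{\mathcal{B}} \cdot \mathcal{A}^\perp \subset \mathcal{A}^\perp$ with the condition $\fz_{\mathcal{B}} \in \mathcal{A}^{\perp\perp}$.

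For (ii) $\Rightarrow$ (i), I will use that $\mathcal{A}^\perp$ is stable under the left action of $\mathcal{A}$ on $\mathcal{B}^*$: indeed, for $a \in \mathcal{A}$ and $\psi \in \mathcal{A}^\perp$, the definition $(a\psi)(a') = \psi(a'a)$ combined with $a'a \in \mathcal{A}$ yields $a\psi \in \mathcal{A}^\perp$. The hypothesis that $\fz_{\mathcal{B}}$ annihilates $\mathcal{A}^\perp$ then gives $(\fz_{\mathcal{B}} \cdot \psi)(a) = \fz_{\mathcal{B}}(a\psi) = 0$, so $\fz_{\mathcal{B}} \cdot \psi \in \mathcal{A}^\perp$ as desired.

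For (i) $\Rightarrow$ (ii), the strategy is to evaluate at the unit. Given $\psi \in \mathcal{A}^\perp$, the reformulated compatibility provides $\fz_{\mathcal{B}} \cdot \psi \in \mathcal{A}^\perp$; since $\mathcal{A}$ and $\mathcal{B}$ share the unit, testing against $1$ and using the identity $1 \cdot \psi = \psi$ in $\mathcal{B}^*$ yields $\fz_{\mathcal{B}}(\psi) = (\fz_{\mathcal{B}} \cdot \psi)(1) = 0$. Hence $\fz_{\mathcal{B}}$ annihilates $\mathcal{A}^\perp$, that is, $\fz_{\mathcal{B}} \in \mathcal{A}^{\perp\perp}$.

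No real obstacle is anticipated: once the Lebesgue decomposition is phrased through the central projection, the proposition follows almost formally from Theorem \ref{T:Lebcompat}, the only substantive inputs being the invariance of $\mathcal{A}^\perp$ under left multiplication by $\mathcal{A}$ and the unitality of the inclusion.
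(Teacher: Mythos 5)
Your proof is correct. The direction (i) $\Rightarrow$ (ii) is essentially the paper's own argument: apply Theorem \ref{T:Lebcompat} together with Theorem \ref{T:Lebproj} to get $\fz_{\mathcal{B}}\cdot\psi\in\mathcal{A}^\perp$ and then evaluate at the unit. For (ii) $\Rightarrow$ (i) you take a genuinely different route. The paper identifies $\fz_{\mathcal{B}}$ with an element of $\mathcal{A}^{**}$ via $\mathcal{A}^{\perp\perp}$, proves directly that $\AC_{\mathcal{W}}(\mathcal{A})=\fz_{\mathcal{B}}\mathcal{A}^*$ by extending and restricting functionals, and then invokes Lemma \ref{L:proj_lebesgue_dec} to produce the $L$-decomposition; this has the added benefit of explicitly exhibiting $\fz_{\mathcal{B}}$ as the Lebesgue projection of $\mathcal{A}$. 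You instead verify condition (ii) of Theorem \ref{T:Lebcompat} head-on, using that $\mathcal{A}^\perp$ is an $\mathcal{A}$-submodule of $\mathcal{B}^*$ (because $\mathcal{A}$ is an algebra) so that $(\fz_{\mathcal{B}}\cdot\psi)(a)=\fz_{\mathcal{B}}(\psi a)=0$ whenever $\psi\in\mathcal{A}^\perp$ and $\fz_{\mathcal{B}}\in\mathcal{A}^{\perp\perp}$; this keeps the argument entirely at the level of annihilators and is arguably more symmetric with the forward direction. One small remark: your convention $(\fz_{\mathcal{B}}\cdot\psi)(a)=\fz_{\mathcal{B}}(a\psi)$ is the mirror image of the paper's $(\Lambda\varphi)(a)=\Lambda(\varphi a)$, but since $\mathcal{A}^\perp$ is invariant under both left and right multiplication by elements of $\mathcal{A}$, the argument goes through with either convention.
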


\begin{proof}
  (i) $\Rightarrow$ (ii)
  Let $\psi \in \mathcal{A}^{\bot}$. Applying Theorem \ref{T:Lebcompat} and Theorem \ref{T:Lebproj},
  we see that $\fz_{\mathcal{B}} \cdot \psi \in \mathcal{A}^{\bot}$.
  In particular, $\fz_{\mathcal{B}}(\psi) = (\fz_{\mathcal{B}} \cdot \psi)(I) = 0$.
  Hence $\fz_{\mathcal{B}} \in \mathcal{A}^{\bot \bot}$.

  (ii) $\Rightarrow$ (i) Let $\fz_{\mathcal{B}} \in \mathcal{A}^{\bot \bot}$, which we identify with $\mathcal{A}^{**}$ in the usual way.
  We claim that $\AC_{\mathcal{W}}(\mathcal{A}) = \fz_{\mathcal{B}} \mathcal{A}^*$.
  Indeed, if $\varphi \in \mathcal{A}^*$ and $\psi \in \mathcal{B}^*$ is any extension of $\varphi$,
  then $\fz_{\mathcal{B}} \varphi = (\fz_{\mathcal{B}} \psi)|_{\mathcal{A}} \in \AC_{\mathcal{W}}(\mathcal{A})$,
since $\fz_{\mathcal{B}} \psi \in \AC_{\mathcal{W}}(\mathcal{B})$.
  Conversely, if $\varphi \in \AC_{\mathcal{W}}(\mathcal{A})$, then
  $\varphi$ extends to an element $\psi \in \AC_{\mathcal{W}}(\mathcal{B})$,
  so $\fz_{\mathcal{B}} \varphi = (\fz_{\mathcal{B}} \psi) |_{\mathcal{A}} = \psi |_{\mathcal{A}} = \varphi$.
  This establishes the claim.

  Since $\AC_{\mathcal{W}}(\mathcal{A}) = \fz_{\mathcal{B}} \mathcal{A}^*$, Lemma \ref{L:proj_lebesgue_dec} implies that multiplication by $\fz_{\mathcal{B}}$
  gives a Lebesgue decomposition of $\mathcal{A}$, which is compatible with that of $\mathcal{B}$ by construction.
\end{proof}

\section{The Gleason--Whitney property}\label{S:GW}

In this section, we make use of a property of inclusions of operator algebras to study the compatibility of Lebesgue decompositions. This sheds some light on the manner in which such decompositions are often constructed in the literature. 

We begin with some motivation.
Let $\Hinf$ denote the algebra of bounded holomorphic functions on the open unit disc $\bD\subset \bC$, equipped with the usual supremum norm over $\mathbb{D}$. Let $\sigma$ denote arclength measure on the unit circle $\bT$. As is well known, $\Hinf$ may be embedded isometrically as a weak-$*$ closed subalgebra of $L^\infty(\bT,\sigma)$, by taking
radial boundary limits. The resulting inclusion $\Hinf\subset L^\infty(\bT,\sigma)$ has the following peculiar property, first discovered by Gleason and Whitney \cite[Theorem 5.4]{gleason1962}.

\begin{theorem}\label{T:classicalGW}
Let $\phi$ be a weak-$*$ continuous functional on $H^\infty(\bD)$. Then, $\phi$ admits a unique Hahn--Banach extension to $L^\infty(\bT,\sigma)$, and this extension must be weak-$*$ continuous.
\end{theorem}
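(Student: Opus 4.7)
The proof divides into two tasks: showing existence of a weak-$*$ continuous Hahn--Banach extension of $\phi$, and showing uniqueness of Hahn--Banach extensions. Together these yield the theorem.

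For existence, the key identification is $H^\infty(\bD) \cong (L^1(\bT,\sigma)/H^1_0)^*$, where $H^1_0$ denotes the preannihilator of $H^\infty$ inside $L^1$; by the F.~and~M.~Riesz theorem, this consists of $g\in L^1$ with $\widehat g(n) = 0$ for $n \le 0$. Any weak-$*$ continuous $\phi$ on $H^\infty$ takes the form $\phi(f) = \int f g\,d\sigma$ for some coset $[g] \in L^1/H^1_0$, with $\|\phi\|$ equal to the quotient norm $\inf_{h \in H^1_0}\|g+h\|_1$. By the classical proximinality of $H^1_0$ in $L^1$, this infimum is attained by some $g_0 \in [g]$. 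Then $\Phi_0(u) := \int u\, g_0 \, d\sigma$ defines a weak-$*$ continuous functional on $L^\infty$ with $\|\Phi_0\| = \|g_0\|_1 = \|\phi\|$, hence a Hahn--Banach extension.

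For uniqueness, let $\Psi$ be any Hahn--Banach extension of $\phi$ to $L^\infty$; the goal is to show $\Psi = \Phi_0$. The idea is to exploit the logmodular structure of $H^\infty$ inside $L^\infty$: every $u \in L^\infty$ with $u \ge \epsilon > 0$ is of the form $|f|$ for some outer invertible $f \in H^\infty$. After reducing to the case in which $\phi$ is a state on $H^\infty$ (via the extremal representative $g_0$, whose phase is well-controlled by inner--outer factorization), $\Psi$ becomes a state on $L^\infty$. Cauchy--Schwarz for states, combined with the identities $\Psi(f) = \phi(f)$ and $\Psi(f^{-1}) = \phi(f^{-1})$ for outer invertible $f \in H^\infty$, yield matching upper and lower bounds on $\Psi(|f|^2)$ in terms of $\phi$. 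Logmodularity then transfers this control to a norm-dense subspace of positive functions in $L^\infty$, and positivity determines $\Psi$ uniquely; hence $\Psi = \Phi_0$ and the unique Hahn--Banach extension is automatically weak-$*$ continuous.

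The main obstacle is the uniqueness step. The subtlety is that a priori $\Psi$ may be a purely singular (finitely additive) functional on $L^\infty$, so a direct density argument based on $H^\infty + \overline{H^\infty}$ being weak-$*$ dense in $L^\infty$ fails: agreement on a weak-$*$ dense subspace does not propagate to all of $L^\infty$ for functionals that are not themselves weak-$*$ continuous. It is logmodularity together with the state Cauchy--Schwarz inequalities that allow one to bypass this obstruction. This structural feature of the pair $H^\infty \subset L^\infty$ is, as the rest of the paper illustrates, quite rare among inclusions of operator algebras.
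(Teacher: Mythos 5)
The paper does not actually prove this statement: it is quoted as the classical Gleason--Whitney theorem with a citation to \cite[Theorem 5.4]{gleason1962}, so the relevant comparison is with the classical proof (Gleason--Whitney's paper, or Garnett's \emph{Bounded Analytic Functions}, Ch.~V). Your existence argument is correct and standard: the identification $H^\infty(\bD)\cong (L^1/H^1_0)^*$, attainment of the quotient norm by some representative $g_0$ (proximinality of $H^1_0$ in $L^1$, which rests on the F.\ and M.\ Riesz theorem plus weak-$*$ compactness in the space of measures), and the observation that $u\mapsto \int u g_0\,d\sigma$ is then a weak-$*$ continuous Hahn--Banach extension.

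The uniqueness half, which you rightly single out as the main obstacle, contains a genuine gap. A minor point first: the reduction to states is not achieved by ``inner--outer factorization of $g_0$'' --- the extremal representative $g_0$ is a general $L^1$ function, not an element of $H^1$, and has no such factorization. The classical reduction instead uses the dual extremal problem: weak-$*$ compactness of the unit ball of $H^\infty$ yields $f_0\in H^\infty$ with $\|f_0\|_\infty\le 1$ and $f_0g_0=|g_0|$ a.e., and one replaces an arbitrary Hahn--Banach extension $\Psi$ by the positive functional $u\mapsto\Psi(f_0u)$. The serious problem is the central mechanism you invoke: ``matching upper and lower bounds on $\Psi(|f|^2)$'' obtained from Cauchy--Schwarz and the values $\phi(f)$, $\phi(f^{-1})$. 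Cauchy--Schwarz for a state gives only the lower bound $\Psi(|f|^2)\ge|\phi(f)|^2$ (and similarly $\Psi(|f|^{-2})\ge|\phi(f^{-1})|^2$); there is no companion upper bound on $\Psi(|f|^2)$ in terms of $\phi$, so the two-sided squeeze you describe is not available. The classical argument instead plays the two candidate extensions off against each other, establishing an inequality of the form $\Psi_1(e^{tu})\,\Psi_2(e^{-tu})\ge|\phi(h_t)\phi(h_t^{-1})|^2$ for the outer function $h_t$ with $|h_t|^2=e^{tu}$, and then differentiates at $t=0$. This closes immediately when $\phi$ is multiplicative (so that $\phi(h_t)\phi(h_t^{-1})=1$), but for a non-multiplicative state the product $\phi(h_t)\phi(h_t^{-1})$ can be arbitrarily small --- take $\phi=\tfrac12(\delta_0+\delta_{1/2})$ and $h$ with $h(0)=-h(1/2)$ --- so the reduction via $f_0$ and careful control of the (possibly unbounded) harmonic conjugate are indispensable. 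As written, your uniqueness step would not go through. Finally, note that within this paper one could in principle deduce the statement from And\^o's Lebesgue decomposition of the dual of $H^\infty(\bD)$ together with Proposition \ref{P:GWcompatspaces}, but the paper itself points out that And\^o's theorem is classically derived \emph{from} Gleason--Whitney, so that route would be circular here.
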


We now wish to isolate this phenomenon. As before, we fix $\W$, the dual space of some normed space. Given a pair of subspaces $\X\subset \Y$ of $\W$, following  \cite{BL2007} and \cite{blecherlabuschagne2007} we say that the inclusion $\X\subset \Y$ has the \emph{Gleason--Whitney property relative to $\W$} if, whenever $\phi\in\ac_\W(\X)$ and $\psi$ is a Hahn--Banach extension of $\phi$ to $\Y$, then $\psi\in \ac_\W(\Y)$. Thus, Theorem \ref{T:classicalGW} says that the inclusion  $\Hinf\subset L^\infty(\bT,\sigma)$ has the Gleason--Whitney property relative to $L^\infty(\bT,\sigma)$. A large class of additional examples is furnished by \cite[Theorem 4.1]{BL2007}. For the purposes of this paper, the analogue of the uniqueness statement in Theorem \ref{T:classicalGW} will not be relevant.

The first step in relating the Gleason--Whitney property to  the compatibility of Lebesgue decompositions is the following;
the proof is very similar to that of \cite[Theorem 5.2]{blecherlabuschagne2007}.

\begin{proposition}\label{P:GWcompatspaces}
Let $\W$ be the dual space of some normed space. Let $\X\subset \Y$ be subspaces of $\W$. Assume that $\X$ and $\Y$ admit compatible Lebesgue decompositions. Then, the inclusion $\X\subset \Y$ has the Gleason--Whitney property relative to $\W$.
\end{proposition}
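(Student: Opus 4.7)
The plan is to take an arbitrary $\phi \in \ac_\W(\X)$ with a Hahn--Banach extension $\psi \in \Y^*$ and show $\psi \in \ac_\W(\Y)$ by applying the Lebesgue decomposition on $\Y$ and exploiting compatibility together with the norm-preservation of Hahn--Banach extensions.

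First, I would decompose $\psi = \psi_a + \psi_s$ relative to the Lebesgue decomposition of $\Y$, so that $\psi_a \in \ac_\W(\Y)$, $\psi_s \in \sing_\W(\Y)$, and crucially $\|\psi\| = \|\psi_a\| + \|\psi_s\|$. By the compatibility assumption, $\psi_a|_\X \in \ac_\W(\X)$ and $\psi_s|_\X \in \sing_\W(\X)$. Restricting to $\X$ gives $\phi = \psi|_\X = \psi_a|_\X + \psi_s|_\X$, which must coincide with the Lebesgue decomposition of $\phi$ in $\X^*$ by uniqueness of the $L$-summand decomposition. Since $\phi \in \ac_\W(\X)$, its singular part is zero, so $\psi_s|_\X = 0$.

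The second key step is a short norm estimate. Using $\psi_s|_\X = 0$, we have $\phi = \psi_a|_\X$, hence
\[
\|\phi\| = \|\psi_a|_\X\| \le \|\psi_a\|.
\]
Combining this with the Hahn--Banach equality $\|\phi\| = \|\psi\|$ and the $L$-summand identity $\|\psi\| = \|\psi_a\| + \|\psi_s\|$ yields $\|\psi_s\| \le 0$. Therefore $\psi_s = 0$ and $\psi = \psi_a \in \ac_\W(\Y)$, as required.

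I do not anticipate any significant obstacle: the argument is essentially a direct consequence of the defining properties of an $L$-summand (additivity of the norm on the decomposition, together with uniqueness of the splitting) and of the definition of compatibility, combined with the one tight inequality furnished by the Hahn--Banach condition $\|\psi\|=\|\phi\|$. The only point demanding care is invoking the correct piece of compatibility, namely that $\psi_s|_\X$ is singular (so that uniqueness on $\X^*$ forces it to vanish when $\phi$ is absolutely continuous), rather than the trivial statement that $\psi_a|_\X$ is absolutely continuous.
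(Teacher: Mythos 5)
Your proof is correct and follows essentially the same route as the paper's: decompose the Hahn--Banach extension $\psi=\psi_a+\psi_s$, use compatibility to force $\psi_s|_\X=0$ from the directness of the decomposition of $\X^*$, and then use $\|\psi\|=\|\phi\|=\|\psi_a|_\X\|\le\|\psi_a\|$ together with $\|\psi\|=\|\psi_a\|+\|\psi_s\|$ to kill $\psi_s$. No issues.
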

\begin{proof}
Let $\phi\in \ac_\W(\X)$ and let $\psi\in \Y^*$ be a Hahn--Banach extension of $\phi$.  Write $\psi = \psi_a + \psi_s$ with $\psi_a\in \ac_\W(\Y), \psi_s\in \sing_\W(\Y)$ and $\|\psi\|=\|\psi_a\|+\|\psi_s\|$. Since the Lebesgue decompositions of $\X$ and $\Y$ are compatible, we see that $\psi_s|_\X\in \sing_\W(\X)$. Furthermore, it is always true that $\psi_a|_\X\in \ac_\W(\X)$, so the equality $\phi=\psi_a|_\X+\psi_s|_\X$ and the fact that $\phi\in \ac_\W(\X)$ force $\psi_s|_\X=0$. Hence
  \begin{equation*}
    \|\psi_a\| + \|\psi_s\| = \|\psi\| = \|\phi\| = \|\psi_a |_\X\| \le \|\psi_a\|,
  \end{equation*}
  so that $\psi_s = 0$, and consequently $\psi = \psi_a\in \ac_\W(\Y)$. We conclude that the inclusion $\X\subset \Y$ has the Gleason--Whitney property relative to $\W$.
\end{proof}

Given a Hilbert space $\H$, we denote by $B(\H)$ the $\rC^*$-algebra of bounded linear operators on it, and by $\fK(\H)$ the ideal of compact operators.

We can now identify a class of inclusions of spaces for which the Gleason--Whitney property holds.

\begin{corollary}\label{C:compact}
 Let $\X\subset B(\H)$ be a subspace such that the weak-$*$ closure of $\X\cap \fK(\H)$ in $B(\H)$ contains $\X$. Then, $\X$ and $B(\H)$ admit compatible Lebesgue decompositions relative to $B(\H)$. In particular, the inclusion $\X\subset B(\H)$ has the Gleason--Whitney property relative to $B(\H)$.
\end{corollary}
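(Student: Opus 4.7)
The plan is to apply the compatibility criterion of Theorem \ref{T:Lebcompat} to the inclusion $\X \subset B(\H)$, taking $\W = B(\H)$. The first step is to confirm that $B(\H)$ admits a Lebesgue decomposition relative to itself and to identify its singular part explicitly. The former is obtained from Lemma \ref{L:C*decomp} applied with $\fT = \W = B(\H)$. For the latter, I would invoke the classical $M$-ideal structure of $\fK(\H)$ inside $B(\H)$, which yields the $L$-summand decomposition
\[
B(\H)^* = \fK(\H)^{***} = \mathcal{T}(\H) \oplus_1 \fK(\H)^\perp,
\]
with $\mathcal{T}(\H)$ the trace class identified as the predual of $B(\H)$. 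These two ingredients together force $\SG_{B(\H)}(B(\H)) = \fK(\H)^\perp$; in particular, every singular functional on $B(\H)$ vanishes on compact operators.

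With this identification in hand, I would verify condition {\rm (ii)} of Theorem \ref{T:Lebcompat}. Fix $\psi \in \X^\perp \subset B(\H)^*$ and decompose $\psi = \psi_a + \psi_s$ according to the Lebesgue decomposition of $B(\H)$. Then $\psi_s$ vanishes on $\fK(\H)$, hence on $\X \cap \fK(\H)$; since $\psi$ itself vanishes on $\X \cap \fK(\H)$, so must $\psi_a$. But $\psi_a$ is weak-$*$ continuous on $B(\H)$, so it annihilates the weak-$*$ closure of $\X \cap \fK(\H)$, which contains $\X$ by hypothesis. Therefore $\psi_a|_\X = 0$, and in turn $\psi_s|_\X = \psi|_\X - \psi_a|_\X = 0$. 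Theorem \ref{T:Lebcompat} then produces a Lebesgue decomposition of $\X$ compatible with that of $B(\H)$, and the Gleason--Whitney assertion follows immediately from Proposition \ref{P:GWcompatspaces}.

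No serious obstacle arises. The only delicate point is the identification $\SG_{B(\H)}(B(\H)) = \fK(\H)^\perp$; once this is acknowledged, the criterion of Theorem \ref{T:Lebcompat} is perfectly calibrated for the hypothesis $\X \subset \ol{\X \cap \fK(\H)}^{w^*}$, since it reduces the whole matter to a one-line weak-$*$ continuity argument applied to the absolutely continuous part of an arbitrary annihilator functional.
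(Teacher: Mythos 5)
Your proof is correct and follows essentially the same route as the paper: both arguments verify condition (ii) of Theorem \ref{T:Lebcompat} by showing that the singular part of any functional in $\X^\perp$ annihilates $\fK(\H)$, then use weak-$*$ continuity of the absolutely continuous part together with the hypothesis $\X\subset \ol{\X\cap\fK(\H)}^{\textsc{w}*}$, and conclude via Proposition \ref{P:GWcompatspaces}. The only difference is that the paper justifies the key fact $\SG_{B(\H)}(B(\H))\subset \fK(\H)^\perp$ by referring to the argument of \cite[Lemma 2.5]{CTh2021}, whereas you derive it directly from the classical $M$-ideal decomposition $B(\H)^*=\fK(\H)^*\oplus_1\fK(\H)^\perp$ and the uniqueness of the complement of an $L$-summand; both justifications are valid.
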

\begin{proof}
First, we note that $B(\H)$ admits a Lebesgue decomposition by Lemma \ref{L:C*decomp}. Next, let $\phi$ be a continuous linear functional on $B(\H)$ that annihilates $\X$, and write $\phi=\phi_a+\phi_s$ for its Lebesgue decomposition. Arguing as in the proof of \cite[Lemma 2.5]{CTh2021}, we see that $\phi_s$ annihilates $\fK(\H)$, so that $\phi_a=\phi-\phi_s$ annhilates $\X\cap \fK(\H)$. Since $\phi_a$ is absolutely continuous, this means that $\phi_a$ annhilates the weak-$*$ closure of $\X\cap \fK(\H)$, which contains $\X$ by assumption. Thus, $\phi_s$ annihilates $\X$ as well, so we may apply Theorem \ref{T:Lebcompat} to see that $\X$ admits a Lebesgue decomposition compatible with that of $B(\H)$. In particular, the inclusion $\X\subset B(\H)$ has the Gleason--Whitney property relative to $B(\H)$ by virtue of Proposition \ref{P:GWcompatspaces}.
\end{proof}

A concrete example where the previous result is applicable is the space $\Han(\mathcal{H})$ of (little) Hankel operators on a reproducing kernel Hilbert space $\mathcal{H}$ that densely and contractively contains its multiplier algebra $\M(\mathcal{H})$; see e.g.\ \cite[Section 2]{AHM+18} for details.
Thus, we see that $\Han(\mathcal{H})$ admits a Lebesgue decomposition that is compatible with that of $B(\mathcal{H})$. This stands in contrast with the situation for the multiplier algebra $\mathcal{M}(H)$; see Subsection \ref{SS:multipliers} below.

Assume now that $\mathcal{W}$ is a von Neumann algebra, and let $\mathcal{Y} \subset \mathcal{W}$ be a unital subspace.
Recall that a unital contractive linear functional on $\Y$ is called a \emph{state}.  In the original paper \cite{gleason1962},  Theorem \ref{T:classicalGW} is first proved for states, and then generalized to arbitrary functionals. As we will see, this strategy works much more generally. 

Let $\X\subset \Y$ be a unital subspace. We say that the inclusion $\X\subset \Y$ has the \emph{Gleason--Whitney property for states relative to $\W$} if, whenever $\psi$ is a state on $\Y$ with the property that $\psi |_\X \in \ac_\W(\X)$, then we must have $\psi \in \ac_\W(\Y)$.
We show next that this a priori weaker condition is the same as the previous one in the setting of operator algebras, provided that the larger algebra admits a Lebesgue projection, using an argument similar to that used in \cite[Theorem 5.4]{gleason1962}.

\begin{theorem}\label{T:GWstates}
Let $\W$ be a von Neumann algebra. Let $ \B\subset \W$ be unital operator algebra that admits a Lebesgue decomposition. Let $\A\subset \B$ be a unital subalgebra. Assume that the inclusion $\A\subset \B$ has the Gleason--Whitney property for states relative to $\W$. Then, the inclusion $\A\subset \B$ has the usual Gleason--Whitney property relative to $\W$.
\end{theorem}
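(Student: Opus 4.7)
The plan is to reduce, via the Lebesgue decomposition of $\mathcal{B}$, to showing that no nonzero singular functional on $\B$ can be a Hahn--Banach extension of an absolutely continuous functional on $\A$, and then to manufacture from any such hypothetical extension a state that contradicts the Gleason--Whitney property for states. Let $\varphi \in \ac_{\W}(\A)$ and let $\psi \in \B^*$ be a Hahn--Banach extension of $\varphi$. Using the Lebesgue projection $\fz_{\B} \in \B^{**}$ furnished by Theorem \ref{T:Lebproj}, decompose $\psi = \psi_a + \psi_s$. The chain of inequalities
\[
\|\varphi\| = \|\psi_a|_\A + \psi_s|_\A\| \leq \|\psi_a|_\A\| + \|\psi_s|_\A\| \leq \|\psi_a\| + \|\psi_s\| = \|\psi\| = \|\varphi\|
\]
forces $\|\psi_s|_\A\| = \|\psi_s\|$, so $\psi_s$ is itself a Hahn--Banach extension of $\psi_s|_\A = \varphi - \psi_a|_\A \in \ac_{\W}(\A)$. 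The desired conclusion thus reduces to the claimed reduction.

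Assume toward a contradiction that there exists $\eta \in \sing_{\W}(\B)$ with $\|\eta\| = \|\eta|_\A\| = 1$ and $\eta|_\A \in \ac_{\W}(\A)$. Applying the Hahn--Banach theorem in $\A^*$, one obtains $\Lambda_\A \in \A^{**}$ with $\|\Lambda_\A\| = 1$ and $\Lambda_\A(\eta|_\A) = 1$. Let $\Lambda \in \B^{**}$ be the image of $\Lambda_\A$ under the canonical isometric embedding $\A^{**} \cong \A^{\perp\perp} \hookrightarrow \B^{**}$, and set $\sigma := \Lambda \cdot \eta \in \B^*$. A direct calculation gives $\sigma(1) = \Lambda(\eta) = \Lambda_\A(\eta|_\A) = 1$ and $\|\sigma\| \leq \|\Lambda\|\,\|\eta\| = 1$, so $\sigma$ is a state on $\B$. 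Moreover, since centrality of $\fz_{\B}$ makes $\sing_{\W}(\B) = (I - \fz_{\B})\B^*$ into a $\B^{**}$-submodule of $\B^*$, we also have $\sigma \in \sing_{\W}(\B)$. A routine computation exploiting $\Lambda \in \A^{\perp\perp}$ further yields the restriction formula $\sigma|_\A = \Lambda_\A \cdot (\eta|_\A)$ in $\A^*$.

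The main technical hurdle is upgrading this restriction formula to the assertion $\sigma|_\A \in \ac_{\W}(\A)$. To this end, I would choose a weak-$*$ continuous extension $\widetilde{\eta} \in \W_*$ of $\eta|_\A$ and let $\Lambda_{\W} \in \W^{**}$ be the image of $\Lambda_\A$ under the bitranspose of $\A \hookrightarrow \W$. Applying Lemma \ref{L:C*decomp} to $\W$ yields a central projection $\fz \in \W^{**}$ with $\W_* = \fz \W^*$, and centrality then implies that $\W_*$ is a $\W^{**}$-submodule of $\W^*$; hence $\Lambda_{\W} \cdot \widetilde{\eta} \in \W_*$. Tracking the module actions through the inclusions $\A \subset \B \subset \W$ shows $(\Lambda_{\W} \cdot \widetilde{\eta})|_\A = \sigma|_\A$, thereby realizing $\sigma|_\A$ as the restriction of a weak-$*$ continuous functional on $\W$. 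The Gleason--Whitney property for states applied to $\sigma$ now gives $\sigma \in \ac_{\W}(\B)$, which together with $\sigma \in \sing_{\W}(\B)$ and the direct sum $\B^* = \ac_{\W}(\B) \oplus_1 \sing_{\W}(\B)$ forces $\sigma = 0$, contradicting $\|\sigma\| = 1$.
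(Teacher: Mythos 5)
Your proof is correct, and at its core it runs on the same engine as the paper's: given a norm-attained functional, multiply by a norming element $\Lambda$ of $\A^{**}$ (acting through the $\B^{**}$-module structure of $\B^*$) to manufacture a state, check that its restriction to $\A$ is absolutely continuous, and feed it to the Gleason--Whitney property for states. The difference is organizational. The paper applies this trick directly to the full Hahn--Banach extension $\psi$: it picks $\Lambda$ with $\Lambda(\psi)=\Lambda(\phi)=1$, concludes $\Lambda\psi\in\ac_\W(\B)$ from the state hypothesis, and then uses that $\ac_\W(\B)$ and $\sing_\W(\B)$ are $\B^{**}$-submodules to split $\Lambda\psi=\Lambda\psi_a+\Lambda\psi_s$ and force $\Lambda\psi_s=0$, whence $\|\psi_s\|=\Lambda(\psi_s)=0$. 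You instead first observe that additivity of norms in the $L$-decomposition makes $\psi_s$ itself a Hahn--Banach extension of the absolutely continuous functional $\psi_s|_\A$, and then run the trick on the normalization of $\psi_s$ alone; the contradiction between $\sigma=0$ and $\sigma(1)=1$ replaces the paper's norm computation. One genuine merit of your write-up is that you explicitly justify why the restricted state $\Lambda_\A\cdot(\eta|_\A)$ lies in $\ac_\W(\A)$, by pushing $\Lambda_\A$ into $\W^{**}$ and using that $\W_*$ is a $\W^{**}$-submodule of $\W^*$. The paper uses the analogous fact for $\Lambda\phi$ without comment, even though Lemma \ref{L:ideal} only gives that the \emph{norm closure} of $\ac_\W(\A)$ is a submodule and $\A$ is not assumed to admit a Lebesgue decomposition; so your argument fills in a detail the paper glosses over. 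Your ``tracking the module actions'' step is indeed routine (a Goldstine net in $\A$ converging to $\Lambda_\A$ verifies $(\Lambda_\W\cdot\widetilde{\eta})|_\A=\Lambda_\A\cdot(\eta|_\A)$), so no gap there.
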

\begin{proof}
Let $\phi\in\A^*$ be an absolutely continuous functional. To prove the result,  it suffices to assume that $\|\phi\|=1$ and to show that all Hahn--Banach extensions of $\phi$ to $\B$ must be absolutely continuous as well. To see this, let $\psi\in \B^*$ be a Hahn--Banach extension of $\phi$, so that $\|\psi\|=1$. There is $\Lambda\in \A^{**}$ with $\|\Lambda\|=1$ and $\Lambda(\psi)=\Lambda(\phi)=1$. Note then that the product $\Lambda \psi$ is a state on $\B$ which extends the state $\Lambda \phi$ on $\A$. By assumption, we conclude that $\Lambda\psi\in \ac_\W(\B)$.

Next, write $\psi=\psi_a+\psi_s$ where $\psi_a\in \ac_\W(\B), \psi_s\in \sing_\W(\B)$ and $1=\|\psi_a\|+\|\psi_s\|$. We infer that $\Lambda(\psi_s)=\|\psi_s\|$. By Theorem \ref{T:Lebproj}, we see that $\SG_\W(\B)$ is a submodule of $\B^*$. Moreover, the fact that $\B$ admits a Lebesgue decomposition forces $\ac_\W(\B)$ to be norm-closed, whence Lemma \ref{L:ideal} yields that $\ac_\W(\B)$ is a submodule of $\B^*$ as well. Therefore, we have that $\Lambda\psi_a\in \ac_\W(\B)$ and $\Lambda \psi_s\in \sing_\W(\B)$. Since
$
\Lambda \psi=\Lambda\psi_a+\Lambda \psi_s
$
we infer that $\Lambda \psi_s=\Lambda\psi-\Lambda\psi_a\in \ac_\W(\B)\cap\sing_\W(\B)$ so that $\Lambda\psi_s=0$. In particular, we find
\[
\|\psi_s\|=\Lambda(\psi_s)=(\Lambda \psi_s)(I)=0
\]
and thus $\psi=\psi_a\in \ac_\W(\B)$.
\end{proof}

We record an easy consequence.

\begin{corollary}\label{C:GWtriple}
Let $\W$ be a von Neumann algebra. Let $ \B\subset \W$ be unital operator algebra that admits a Lebesgue decomposition. Let $\A\subset \B$ be a unital subalgebra. Assume that the inclusion $\A\subset \B$ has the Gleason--Whitney property relative to $\W$. If $\C\subset\B$ is another unital subalgebra containing $\A$, then the inclusions $\A\subset \C$ and $\C\subset \B$ both have the Gleason--Whitney property relative to $\W$.
\end{corollary}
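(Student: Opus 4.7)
The plan is to handle the two inclusions separately, using transitivity of restrictions for $\A \subset \C$ and the states characterization from Theorem \ref{T:GWstates} for $\C \subset \B$.

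For $\A \subset \C$, I would argue directly, with no need for any Lebesgue decomposition machinery. Given $\phi \in \ac_\W(\A)$ and a Hahn--Banach extension $\chi \in \C^*$, I would pick any Hahn--Banach extension $\psi \in \B^*$ of $\chi$. Since $\|\psi\| = \|\chi\| = \|\phi\|$ and $\psi|_\A = \chi|_\A = \phi$, the functional $\psi$ is automatically a Hahn--Banach extension of $\phi$ to $\B$. The Gleason--Whitney property for $\A \subset \B$ then gives $\psi \in \ac_\W(\B)$, so $\psi$ is the restriction to $\B$ of a weak-$*$ continuous functional on $\W$; restricting further to $\C$ shows that $\chi \in \ac_\W(\C)$.

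For $\C \subset \B$, the direct approach breaks down, because a Hahn--Banach extension of a functional on $\C$ to $\B$ need not be a Hahn--Banach extension of its restriction to $\A$ (norms may drop on $\A$). Instead, I would invoke Theorem \ref{T:GWstates}: since $\B$ already admits a Lebesgue decomposition by hypothesis, it is enough to verify the Gleason--Whitney property for states for the inclusion $\C \subset \B$. Let $\psi$ be a state on $\B$ whose restriction to $\C$ lies in $\ac_\W(\C)$. Then $\psi|_\A = (\psi|_\C)|_\A \in \ac_\W(\A)$, and because $\A$ is unital, $\psi|_\A$ is itself a state on $\A$, so $\|\psi|_\A\| = 1 = \|\psi\|$. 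Thus $\psi$ is a Hahn--Banach extension of $\psi|_\A$, and the Gleason--Whitney property assumed for $\A \subset \B$ forces $\psi \in \ac_\W(\B)$.

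There is no real obstacle here; the only subtlety is recognizing that the unitality of $\A$ and the normalization of states together make every state on $\B$ automatically a norm-preserving extension of its restriction to $\A$. This lets the hypothesis on $\A \subset \B$ propagate in both directions through the intermediate algebra $\C$, once Theorem \ref{T:GWstates} is in hand.
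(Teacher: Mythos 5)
Your proof is correct and follows essentially the same route as the paper: the $\A\subset\C$ part is handled by the same direct transitivity argument (which the paper dismisses as immediate), and the $\C\subset\B$ part uses exactly the paper's device of noting that a state on $\B$ is automatically a Hahn--Banach extension of its restriction to the unital subalgebra $\A$, then invoking Theorem \ref{T:GWstates}.
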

\begin{proof}
The fact that  $\A\subset \C$ has the Gleason--Whitney property relative to $\W$ is an immediate consequence of the fact that $\A\subset \B$ has it. Next, let $\psi$ be a state on $\B$ such that $\psi|_\C$ is absolutely continuous. In particular, $\psi|_\A\in \ac_\W(\A)$. Since $\A$ is unital, we see that $\psi$ is Hahn--Banach extension of $\psi|_\A$ to $\B$, so by assumption we must have that $\psi$ is absolutely continuous. This shows that $\C\subset \B$ has the Gleason--Whitney property for states relative to $\W$, and thus the usual property by Theorem \ref{T:GWstates}.
\end{proof}

We focus now on unital subalgebras of $B(\H)$ and we identify some necessary condition for the Gleason--Whitney property to hold. The following fact is elementary, but it will be very useful in Section \ref{S:examples}.

\begin{lemma}\label{L:GWquotient}
Let $\A \subset B(\H)$ be a unital subalgebra, and let $\fT\subset B(\H)$ be a unital $\rC^*$-algebra containing both $\A$ and $\fK(\H)$.
Assume that there exists an absolutely continuous state $\varphi$ on $\mathcal{A}$ satisfying
\[
|\phi(a)|\leq \|a+\fK(\H)\|, \quad a\in \A.
\]
Then the inclusion $\mathcal{A} \subset \fT$ does not have the Gleason--Whitney property for states relative
to $B(\mathcal{H})$.
 \end{lemma}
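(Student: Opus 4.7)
The plan is to exhibit a Hahn--Banach extension of the absolutely continuous state $\phi$ to $\fT$ that fails to be absolutely continuous. This would directly witness the failure of the Gleason--Whitney property for states for the inclusion $\A\subset \fT$ relative to $B(\H)$. The key leverage is the hypothesis $|\phi(a)|\le \|a+\fK(\H)\|$, which says that $\phi$ \emph{already} factors through the Calkin quotient on $\A$, so its norm is undetected by the compacts; the idea is to use this slack to produce an extension that completely annihilates $\fK(\H)$.

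First I would define a linear functional $\phi_0$ on the subspace $\A+\fK(\H)\subset \fT$ by
\[
\phi_0(a+k)=\phi(a), \qquad a\in\A,\ k\in\fK(\H).
\]
The hypothesis makes $\phi_0$ well-defined and contractive, since $|\phi_0(a+k)|=|\phi(a)|\le \|a+\fK(\H)\|\le \|a+k\|$. By the Hahn--Banach theorem, $\phi_0$ extends to a contractive linear functional $\psi$ on $\fT$. Since $I\in \A$ and $\psi(I)=\phi(I)=1$, the functional $\psi$ is a state on $\fT$, and by construction $\psi|_\A=\phi$ and $\|\psi\|=1=\|\phi\|$, so $\psi$ is a Hahn--Banach extension of $\phi$ from $\A$ to $\fT$. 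Crucially, $\psi$ vanishes identically on $\fK(\H)$.

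Next I would show $\psi\notin \ac_{B(\H)}(\fT)$. Suppose for contradiction that $\psi$ admits a weak-$*$ continuous extension $\Psi$ to $B(\H)$. Then $\Psi$ corresponds to a trace class operator, i.e.\ $\Psi(X)=\operatorname{tr}(TX)$ for some $T$ of trace class, and since $\fK(\H)\subset \fT$ we have $\operatorname{tr}(TK)=\Psi(K)=\psi(K)=0$ for every $K\in\fK(\H)$. This forces $T=0$ (the pairing between trace class and compact operators is nondegenerate), hence $\Psi=0$, contradicting $\Psi(I)=\psi(I)=1$. Thus $\phi$ is absolutely continuous while its Hahn--Banach extension $\psi$ is not, proving the claim.

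The argument is essentially routine once the factoring-through-the-Calkin-algebra viewpoint is adopted; the only subtle point is verifying that $\phi_0$ is well-defined on $\A+\fK(\H)$, which follows immediately from the hypothesis (if $a+k=a'+k'$ then $a-a'\in \fK(\H)$, so $\|a-a'+\fK(\H)\|=0$ and $\phi(a)=\phi(a')$). I do not anticipate a real obstacle; the lemma is meant as a clean packaging of the standard observation that a functional ``supported away from the compacts'' in this sense cannot be weak-$*$ continuous on $B(\H)$, which will then drive the concrete non-examples in Section \ref{S:examples}.
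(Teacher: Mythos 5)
Your proposal is correct and follows essentially the same route as the paper: both construct a state extension of $\phi$ to $\fT$ that annihilates $\fK(\H)$ (the paper by factoring through the quotient $\fT/\fK(\H)$ and extending there, you by extending the well-defined contractive functional $a+k\mapsto\phi(a)$ on $\A+\fK(\H)$), and then observe that such a state cannot be weak-$*$ continuous. Your final step via trace-class duality is just a concrete instantiation of the paper's remark that $\fK(\H)$ is weak-$*$ dense in $B(\H)$.
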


\begin{proof}
Denoting by $q:\fT\to \fT/\fK(\H)$ the natural quotient map, we find a state $\phi':q(\A)\to \bC$ such that $\phi=\phi'\circ q$. Let $\psi$ be a Hahn--Banach extension of $\phi'$ to $\fT/\fK(\H)$. Then, $\psi\circ q$ is a state on $\fT$ extending $\phi$, which cannot be absolutely continuous as it annihilates the weak-$*$ dense set $\fK(\H)$. Hence, the inclusion $\A\subset \fT$ does not have the Gleason--Whitney property for states relative to $B(\H)$.
  \end{proof}

We also record a general result that may be of independent interest. If $\A\subset B(\H)$ is a unital subalgebra, we denote by $\rC^*(\A)\subset B(\H)$ the $\rC^*$-algebra that it generated. A unital $*$-homomorphism $\pi:\rC^*(\A)\to B(\E)$ is said to have the \emph{unique extension property with respect to $\A$} if, whenever $\psi:\rC^*(\A)\to B(\E)$ is a unital completely positive map agreeing with $\pi$ on $\A$, we necessarily have that $\psi$ and $\pi$ agree on $\rC^*(\A)$. The reader may consult \cite{arveson1969},\cite{dritschel2005} or \cite{davidson2015} for more background on this topic.

\begin{theorem}\label{T:GWBH}
Let $\A \subset B(\H)$ be a unital subalgebra such that $\rC^*(\A)$ contains the ideal of compact operators. Assume that the inclusion $\A\subset \rC^*(\A)$ has the Gleason--Whitney property for states relative to $B(\H)$. Then, the identity representation of $\rC^*(\A)$ has the unique extension property with respect to $\A$.
  \end{theorem}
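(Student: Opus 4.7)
The plan is to show that any unital completely positive map $\psi\colon\rC^*(\A)\to B(\H)$ with $\psi|_\A=\id|_\A$ satisfies $\psi=\id$ on $\rC^*(\A)$. Since $\fK(\H)\subset\rC^*(\A)$ is weak-$*$ dense in $B(\H)$, Lemma~\ref{L:C*decomp} provides a Lebesgue decomposition of $\rC^*(\A)$ relative to $B(\H)$, and Theorem~\ref{T:GWstates} then promotes the hypothesized Gleason--Whitney property for states to the full Gleason--Whitney property. For each unit vector $\xi\in\H$, the state $\omega_\xi\circ\psi$ on $\rC^*(\A)$ is a norm-one Hahn--Banach extension of the absolutely continuous functional $\omega_\xi|_\A$; by Gleason--Whitney it lies in $\ac_{B(\H)}(\rC^*(\A))$. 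Polarization extends this to every matrix coefficient $T\mapsto\langle\psi(T)\xi,\eta\rangle$, so $\psi$ is weak-$*$ continuous. Since $\rC^*(\A)$ is weak-$*$ dense in $B(\H)$, $\psi$ admits a unique normal UCP extension $\tilde\psi\colon B(\H)\to B(\H)$, which is $*$-preserving and therefore also fixes $\A^*$ and the operator system $\A+\A^*$.

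To deduce $\tilde\psi|_{\rC^*(\A)}=\id|_{\rC^*(\A)}$, I would invoke a Stinespring dilation $\tilde\psi(T)=V^*\pi(T)V$ with $\pi\colon B(\H)\to B(\K)$ a normal $*$-representation and $V\colon\H\to\K$ an isometry. A direct expansion gives $\|(I-VV^*)\pi(a)V\xi\|^2=\langle\tilde\psi(a^*a)\xi,\xi\rangle-\|a\xi\|^2$ for $a\in\A$ and $\xi\in\H$. Hence the Kadison--Schwarz equality $\tilde\psi(a^*a)=a^*a$ is equivalent to the invariance $\pi(a)V\H\subset V\H$, while the analogous statement with $a^*$ in place of $a$ follows by $*$-preservation. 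Once invariance is established for every $a\in\A$, the subspace $V\H$ is reducing for $\pi(\A)$ and hence for $\pi(\rC^*(\A))$; this turns $\tilde\psi|_{\rC^*(\A)}$ into a $*$-representation agreeing with $\id$ on the generating subalgebra $\A$, forcing $\tilde\psi=\id$ on all of $\rC^*(\A)$.

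The main obstacle is to promote the Kadison--Schwarz inequality $\tilde\psi(a^*a)\ge a^*a$ to equality using only the Gleason--Whitney property. The plan is to analyze the normal Hermitian functional $\tau_\xi=\omega_\xi\circ\tilde\psi-\omega_\xi$ on $B(\H)$: restricted to $\rC^*(\A)$ it is absolutely continuous, vanishes on $\A+\A^*$, and is non-negative on $\{a^*a:a\in\A\}$. Since its Jordan decomposition has positive parts dominated by the absolutely continuous states $\omega_\xi\circ\psi$ and $\omega_\xi|_{\rC^*(\A)}$, each Jordan component is itself absolutely continuous. Combining this with the intrinsic characterization of singular versus absolutely continuous functionals in Proposition~\ref{P:singnorm}, I expect a non-zero Schwarz defect at some $a_0\in\A$ to yield a witness in the unit ball of $\rC^*(\A)$ that detects the positivity of $\tau_\xi$ on $a_0^*a_0$ while remaining compatible with the vanishing of $\tau_\xi$ on $\A+\A^*$, thereby producing the desired contradiction.
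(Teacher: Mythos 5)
Your first step is sound and is genuinely different from the paper's argument: using the Gleason--Whitney property for states, every vector state $\omega_\xi\circ\psi$ of a UCP extension $\psi$ of $\id|_\A$ is a Hahn--Banach extension of the absolutely continuous state $\omega_\xi|_\A$, hence absolutely continuous, and polarization makes $\psi$ weak-$*$ continuous. But the second half contains a genuine gap, and you have correctly located it yourself: you never prove the Kadison--Schwarz equality $\tilde\psi(a^*a)=a^*a$, and the route you sketch for it cannot work. The functional $\tau_\xi=\omega_\xi\circ\tilde\psi-\omega_\xi$ is a difference of two \emph{normal} functionals, so its restriction to $\rC^*(\A)$ is entirely absolutely continuous and its Jordan components have no singular parts; Proposition \ref{P:singnorm}, which characterizes \emph{singular} functionals, therefore gives no leverage at all. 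At that point the Gleason--Whitney hypothesis has been completely spent in establishing normality, and what remains to be shown is that a normal UCP map on $B(\H)$ fixing $\A$ pointwise must fix $\rC^*(\A)$. A Hermitian functional vanishing on the operator system $\A+\A^*$ need not vanish on products $a^*a$ (compare $\nu-\sigma_d$ on $\rC(\bS_d)$ for $d\ge 2$, which kills $\rA(\bB_d)+\rA(\bB_d)^*$ but not $|z_1|^2$), so positivity and vanishing on $\A+\A^*$ alone do not force the multiplicative-domain identity you need. This missing implication is precisely the hard content that Arveson's boundary theorem is designed to supply, and your argument provides no substitute for it.

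The paper proves the theorem by contraposition along a much shorter route: if the unique extension property fails, Arveson's boundary theorem \cite[Theorem 2.1.1]{arveson1972} shows that the quotient map $q:\rC^*(\A)\to\rC^*(\A)/\fK(\H)$ is completely isometric on $\A$, so every state on $\A$ satisfies $|\phi(a)|\le\|a+\fK(\H)\|$; applying Lemma \ref{L:GWquotient} to any absolutely continuous state (e.g.\ a vector state) then produces a Hahn--Banach extension factoring through the Calkin quotient, which annihilates the weak-$*$ dense ideal $\fK(\H)$ and hence cannot be absolutely continuous. If you want to salvage a direct argument, you would need to inject the boundary theorem (or an equivalent essential-norm estimate) at the point where you try to upgrade normality to multiplicativity; as written, the proposal does not close.
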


  \begin{proof}
Assume that the identity representation of $\rC^*(\A)$ does not have the unique extension property with respect to $\A$.  An application of Arveson's boundary theorem \cite[Theorem 2.1.1]{arveson1972} then reveals that the quotient map $q:\rC^*(\A)\to \rC^*(\A)/\fK(\H)$ is completely isometric on $\A$. Therefore, Lemma \ref{L:GWquotient}, applied to any absolutely continuous state on $\mathcal{A}$, implies that the inclusion $\A\subset \rC^*(\A)$ cannot have the Gleason--Whitney property for states relative to $B(\H)$.
  \end{proof}

We now go back to our goal of relating the Gleason--Whitney property of an inclusion to the compatibility of Lebesgue decompositions. The following is one of our main results, and gives a sort of converse to Proposition \ref{P:GWcompatspaces}.

\begin{theorem}\label{T:GWcompat}
Let $\W$ be a von Neumann algebra. Let $\B\subset \W$ be a unital operator algebra and let $\A\subset \B$ be a unital subalgebra. Assume that both $\A$ and $\B$ admit Lebesgue decompositions, and let $\fz_\A$ and $\fz_\B$ be the corresponding Lebesgue projections.
Then, the following statements are equivalent.
\begin{enumerate}[{\rm (i)}]
\item The inclusion $\A\subset \B$ has the Gleason--Whitney property relative to $\W$.
\item We have $\fz_\A=\fz_\B$.
\item The Lebesgue decompositions of $\A$ and $\B$ are compatible.
\end{enumerate}
\end{theorem}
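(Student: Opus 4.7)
The plan is to prove three implications closing a cycle: (iii) $\Rightarrow$ (i), (ii) $\Leftrightarrow$ (iii), and (i) $\Rightarrow$ (ii).

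The implication (iii) $\Rightarrow$ (i) is immediate from Proposition \ref{P:GWcompatspaces} applied to the pair of subspaces $\A \subset \B$ of $\W$, so there is nothing further to do.

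For (ii) $\Leftrightarrow$ (iii), I would lean on Proposition \ref{prop:Lebesgue_proj_smaller_alg} together with the uniqueness of the Lebesgue projection built into Theorem \ref{T:Lebproj}. Under (ii), the equality $\fz_\A = \fz_\B$ in $\B^{**}$ places $\fz_\B$ in $\A^{**} = \A^{\perp\perp}$, so Proposition \ref{prop:Lebesgue_proj_smaller_alg} delivers a Lebesgue decomposition of $\A$ that is compatible with that of $\B$; uniqueness identifies it with the given one, yielding (iii). Conversely, compatibility of the two decompositions forces $\fz_\B \in \A^{\perp\perp}$ by Proposition \ref{prop:Lebesgue_proj_smaller_alg} and recognises multiplication by $\fz_\B$ as implementing the Lebesgue decomposition of $\A$; the uniqueness part of Theorem \ref{T:Lebproj} then gives $\fz_\B = \fz_\A$, which is (ii).

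The substance of the theorem is the implication (i) $\Rightarrow$ (ii). The first move is to invoke Theorem \ref{T:GWstates} to replace the Gleason--Whitney hypothesis by the a priori weaker Gleason--Whitney property for states. By the previous paragraph, it then suffices to prove $\fz_\B \in \A^{\perp\perp}$, and in view of the reformulation of compatibility this is equivalent to showing that, for every $\psi \in \sing_\W(\B)$, the restriction $\psi|_\A$ lies in $\sing_\W(\A)$. So fix such a $\psi$, decompose $\psi|_\A = \phi_a + \phi_s$ in $\A$'s Lebesgue decomposition, and suppose toward a contradiction that $\phi_a \neq 0$. Following the template of the proof of Theorem \ref{T:GWstates}, I would select a norming functional $\Lambda \in \fz_\A \A^{**}$ with $\|\Lambda\|=1$ and $\Lambda(\phi_a) = \|\phi_a\|$, view $\Lambda$ inside $\B^{**}$ via $\A^{\perp\perp}$, and analyse the product $\Lambda \psi$. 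Using that $\sing_\W(\B)$ is a $\B^{**}$-bimodule (Lemma \ref{L:ideal} together with Theorem \ref{T:Lebproj}), $\Lambda \psi$ lands in $\sing_\W(\B)$; a direct computation, exploiting that $\Lambda \in \A^{\perp\perp}$, gives $(\Lambda \psi)|_\A = \Lambda \phi_a$, which is a nonzero element of $\ac_\W(\A)$ with $(\Lambda \phi_a)(I) = \|\phi_a\|$. The plan is then to show that $\Lambda \psi$ is in fact a Hahn--Banach extension of $\Lambda\phi_a$, so that the Gleason--Whitney property forces $\Lambda \psi \in \ac_\W(\B) \cap \sing_\W(\B) = \{0\}$, contradicting the nonvanishing of its restriction.

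The principal obstacle I anticipate is establishing the norm equality $\|\Lambda \psi\| = \|\Lambda \phi_a\|$ that makes $\Lambda \psi$ a genuine Hahn--Banach extension and so triggers Gleason--Whitney. Securing this requires exploiting that $\Lambda$ is concentrated in the absolutely continuous part of $\A^{**}$, the $L$-summand identity $\|\psi\| = \|\phi_a\| + \|\phi_s\|$ available for Hahn--Banach extensions under GW (derived by a brief computation from the hypothesis), and the compatibility of the Arens product on $\B^{**}$ with the central projection $\fz_\A$. This careful norm-tracking, reconciling the $\A$-module structure with the $\B$-module structure on the relevant functionals, is where the crux of the argument lies.
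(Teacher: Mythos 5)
Your treatment of (iii) $\Rightarrow$ (i) and of (ii) $\Leftrightarrow$ (iii) matches the paper (Proposition \ref{P:GWcompatspaces} and Proposition \ref{prop:Lebesgue_proj_smaller_alg}) and is fine. The problem is the crux implication (i) $\Rightarrow$ (ii), where your argument has a genuine gap that you flag but do not close, and for which the tools you propose do not suffice. Having fixed $\psi\in\sing_\W(\B)$ with $\psi|_\A=\phi_a+\phi_s$ and $\phi_a\neq 0$, and a norming $\Lambda\in\fz_\A\A^{**}$ with $\|\Lambda\|=1$ and $\Lambda(\phi_a)=\|\phi_a\|$, you correctly get $(\Lambda\psi)|_\A=\Lambda\phi_a$ with $\|\Lambda\phi_a\|=\|\phi_a\|$ and $\Lambda\psi\in\sing_\W(\B)$. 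But to invoke the Gleason--Whitney property you must know that $\Lambda\psi$ is a \emph{norm-preserving} extension of $\Lambda\phi_a$, i.e.\ $\|\Lambda\psi\|_{\B^*}=\|\phi_a\|$. The only a priori bound is $\|\Lambda\psi\|\le\|\psi\|$, and $(\Lambda\psi)(b)=\Lambda\bigl((\psi b)|_\A\bigr)$ genuinely depends on the values of $\psi$ on all of $\B$ (through products $ba$ with $b\in\B$, $a\in\A$), so nothing forces $\|\Lambda\psi\|$ down to $\|\phi_a\|$. The identity you hope to use, $\|\psi\|=\|\phi_a\|+\|\phi_s\|$, is simply false for a general $\psi\in\B^*$: it holds only when $\psi$ is a Hahn--Banach extension of $\psi|_\A$, which is not part of your setup (and one cannot arrange it, since compatibility must be verified for \emph{every} singular $\psi$). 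Without the norm equality, a singular extension of the nonzero absolutely continuous functional $\Lambda\phi_a$ is not a contradiction --- the Gleason--Whitney property says nothing about non-norm-preserving extensions.

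The paper gets around exactly this obstacle by exploiting positivity rather than norming functionals. To prove $\fz_\A\le\fz_\B$ it tests $\fz_\A(I-\fz_\B)$ against states $\psi$ of the von Neumann algebra $\W$ and forms the two-sided compression $\phi=\fz_\A\cdot\psi\cdot\fz_\A$. Because $\fz_\A$ is a self-adjoint projection in $\W^{**}$, the functional $\phi$ is \emph{positive}, so its norm equals its value at $I$; after normalizing to a state $\theta$, the restriction $\theta|_\B$ is automatically a Hahn--Banach extension of the absolutely continuous state $\theta|_\A$ (both have norm one), and the Gleason--Whitney property applies with no norm computation at all. One then reads off $(I-\fz_\B)(\fz_\A\cdot\psi|_\B\cdot\fz_\A)=0$, which together with the easy inequality $I-\fz_\A\le I-\fz_\B$ yields $\fz_\A=\fz_\B$. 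If you want to repair your argument, you should abandon the one-sided product with a norming $\Lambda$ and instead compress on both sides by $\fz_\A$ applied to states of $\W$; a one-sided module action by a non-positive $\Lambda$ destroys the positivity that makes the extension norm-preserving for free. (As a minor point, your appeal to Theorem \ref{T:GWstates} is backwards here: that theorem upgrades the states version to the full version, whereas in (i) $\Rightarrow$ (ii) you are given the full version and only need the trivial implication to the states version.)
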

\begin{proof}
Before starting the proof, we recall that we identify $\A^{**}$ with a unital subalgebra of $\B^{**}$. More precisely, an element $\Lambda\in \A^{**}$ is identified with $\Lambda'\in \B^{**}$ where
\[
\Lambda'(\psi)=\Lambda(\psi|_\A), \quad \psi\in \B^*.
\]

(i) $\Rightarrow$ (ii):  It is trivial that if $\phi\in \ac_\W(\B)$, then $\phi|_\A\in \ac_\W(\A)$. This readily implies that if $\Lambda\in \A^{**}$ lies in $(\ac_\W(\A))^\perp$, then we also have $\Lambda\in(\ac_\W(\B))^\perp$. By virtue of Theorem \ref{T:Lebproj}, we infer that $(I-\fz_\A)\A^{**}\subset (I-\fz_\B)\B^{**}$, and thus $I-\fz_\A\leq I-\fz_\B$.

Conversely, we claim that $\fz_\A\leq \fz_\B$, or that $\fz_\A(I-\fz_\B)=0$. 
To see this, we need to establish that if $\psi$ is a state on $\W$, then $(\fz_\A(I-\fz_\B))(\psi)=0.$ If $\fz_A(\psi)=0$, then the fact that $\fz_\B$ is central in $\B^{**}$ yields
\begin{align*}
 (\fz_\A(I-\fz_\B))(\psi)&=\widehat\psi(\fz_\A(I-\fz_\B))=\widehat\psi(\fz_\A(I-\fz_\B)\fz_\A)\\
 &\leq \widehat\psi(\fz_\A)=\fz_\A(\psi)=0.
\end{align*}
Assume henceforth that $\psi(\fz_\A)\neq 0$ and put $\phi=\fz_\A \cdot \psi \cdot \fz_\A$. Clearly, $\phi$ is a positive linear functional on $\W$ and $\phi(I)=\fz_\A(\psi)\neq 0$. Thus, the functional $\theta=\frac{1}{\phi(I)}\phi$ is a state on $\W$. Theorem \ref{T:Lebproj} implies that $\phi|_\A = \fz_\A (\psi |_{\A}) \fz_\A \in \ac_\W(\A)$, so that $\theta|_\A\in \ac_\W(\A)$ as well.
By virtue of the Gleason--Whitney property for states, we infer that $\theta|_\B\in \ac_\W(\B)$ and therefore $\phi|_\B=\fz_\A\cdot \psi|_\B \cdot\fz_\A\in \ac_\W(\B)$. Thus,  
\begin{align*}
 (\fz_\A(I-\fz_\B))(\psi)&=(\fz_\A(I-\fz_\B)\fz_\A)(\psi)=(I-\fz_\B)  (\fz_\A\cdot \psi|_\B\cdot \fz_\A)= 0
\end{align*}
 since $(I-\fz_\B)\B^{**}=(\ac_\W(\B))^\perp$.  This establishes the claim, so $\fz_\A\leq \fz_\B$, which, when combined with the previous paragraph, yields $\fz_\A=\fz_\B$.

(ii) $\Rightarrow$ (iii): This follows
from Proposition \ref{prop:Lebesgue_proj_smaller_alg}.

(iii) $\Rightarrow$ (i): This follows directly from Proposition \ref{P:GWcompatspaces}.
\end{proof}

The previous argument relies heavily on the existence of Lebesgue projections. It is conceivable that statements (i) and (iii) in Theorem \ref{T:GWcompat} are equivalent more generally for inclusions of normed spaces where there are no Lebesgue projections to exploit, but we were not able to resolve this question.

We also point here that the previous result sheds some light on the classical inclusion $\Hinf\subset L^\infty(\bT,\sigma)$. The algebra $\Hinf$ is shown in \cite{ando1978} to admit a Lebesgue projection, by utilizing the corresponding projection of the von Neumann algebra $L^\infty(\bT,\sigma)$. (Alternative proofs
can be found in \cite{HWW}, see especially Example IV.1.1 (d) and Remark III 1.8 there.)
Theorem \ref{T:GWcompat} offers an abstract framework explaining why this approach works: the key point is the classical Gleason--Whitney theorem (Theorem \ref{T:classicalGW}).

\section{Constructing projections from the Gleason--Whitney property}\label{S:obstruction}
In this section, we analyze inclusions $\A\subset\B$ of unital operator algebras, where $\B$ admits a Lebesgue projection. Assuming that the inclusion has the Gleason--Whitney property, we construct a projection in $\mathcal{A}^{**}$ that behaves like the Lebesgue projection of $\B$, at least as far as the absolutely continuous characters of $\A$ are concerned.

Before proceeding, we recall some notions from Akemann's non-commutative topology \cite{akemann1969},\cite{akemann1970left}. Let $\fT$ be a $\rC^*$-algebra and let $p\in \fT^{**}$ be a projection. Then, $p$ is said to be \emph{open} if there is an increasing net of positive contractions $(t_\alpha)$ in $\fT$ converging to $p$ in the weak-$*$ topology of $\fT^{**}$. Equivalently, this means that there is a closed left ideal $\fJ\subset \fT$ 
with the property that $\fJ^{\perp\perp}=\fT^{**}p$. For our purposes, we will be concerned with Lebesgue projections that happen to be open. The next result identifies a sufficient condition for this to hold.

\begin{proposition}\label{P:openLeb}
Let $\fT\subset B(\H)$ be a unital $\rC^*$-algebra containing the ideal of compact operators on $\H$. Then, $\fT$ admits a non-trivial open Lebesgue projection.
\end{proposition}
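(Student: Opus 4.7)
The plan is to identify the Lebesgue projection of $\fT$ relative to $B(\H)$ with the central open projection in $\fT^{**}$ corresponding to the closed two-sided ideal $\fK(\H)$, and then to observe non-triviality from the simultaneous existence of trace-class and singular functionals on $\fT$.

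I would first note that, because $\fK(\H) \subset \fT$ and the finite-rank operators are weak-$*$ dense in $B(\H)$, the algebra $\fT$ is itself weak-$*$ dense in $B(\H)$. Lemma \ref{L:C*decomp} then produces the Lebesgue projection $\fz \in \fT^{**}$ relative to $B(\H)$. Independently, I would fix an increasing net $(e_\alpha)$ of finite-rank projections in $\fK(\H)$ tending to $I$ in the strong operator topology; the weak-$*$ limit $p \in \fT^{**}$ of $(\widehat{e_\alpha})$ is an open projection by definition, and it is central because $\fK(\H)$ is a two-sided ideal in $\fT$. The goal is then to prove $p = \fz$.

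The crux is a direct calculation using the module-action conventions of Subsection \ref{SS:bidualopalg}: a functional $\phi \in \fT^*$ satisfies $p \cdot \phi = \phi$ if and only if $\phi(a) = \lim_\alpha \phi(a e_\alpha)$ for every $a \in \fT$. If $\phi \in \ac_{B(\H)}(\fT)$, then since $a e_\alpha \to a$ in the weak-$*$ topology of $B(\H)$ by separate weak-$*$ continuity of multiplication, the identity is immediate, yielding $\ac_{B(\H)}(\fT) \subset p \fT^*$. Conversely, if $p \cdot \phi = \phi$, then $\phi|_{\fK(\H)}$ corresponds to some trace-class operator $T_\phi$, and the identity together with trace duality gives $\phi(a) = \lim_\alpha \mathrm{tr}(T_\phi a e_\alpha) = \mathrm{tr}(T_\phi a)$, exhibiting the weak-$*$ continuous extension $b \mapsto \mathrm{tr}(T_\phi b)$ of $\phi$ to $B(\H)$. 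This forces $p \fT^* = \ac_{B(\H)}(\fT) = \fz \fT^*$, and hence $p = \fz$.

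Non-triviality will then be immediate. On the one hand, any non-zero trace-class operator yields a non-zero absolutely continuous functional on $\fT$, so $\fz \neq 0$. On the other, any state on the non-zero quotient $\fT/\fK(\H)$ pulls back to a non-zero functional on $\fT$ annihilating $\fK(\H)$; since $\fK(\H)$ is weak-$*$ dense in $B(\H)$, such a functional cannot be absolutely continuous, so $\fz \neq I$. The main obstacle I anticipate is the careful unwinding of the one-sided module actions to derive the explicit limit formula for $p \cdot \phi$; once that reduction is in place, everything else reduces to standard facts about approximate identities in $\fK(\H)$ and trace-class duality.
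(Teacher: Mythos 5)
Your proof is correct and follows essentially the same route as the paper: both identify the Lebesgue projection produced by Lemma \ref{L:C*decomp} with the central, open support projection of $\fK(\H)^{\perp\perp}$ in $\fT^{**}$, and both deduce non-triviality from a state that factors through the quotient $\fT/\fK(\H)$. The only difference is that you carry out explicitly, via the approximate identity of finite-rank projections and trace-class duality, the identification $\fK(\H)^{\perp\perp}=\fz\fT^{**}$ that the paper outsources to the argument of \cite[Lemma 2.5]{CTh2021}.
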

\begin{proof}
Since $\fT$ is a $\rC^*$-algebra containing the ideal $\fK$ of compact operators, it must be weak-$*$ dense in $B(\H)$. By virtue of Lemma \ref{L:C*decomp}, we infer that $\fT$ admits a Lebesgue projection, which we denote by $\fz$.  Arguing as in the proof of \cite[Lemma 2.5]{CTh2021}, we obtain that $\fK^{\perp\perp}=\fz \fT^{**}$, whence  $\fz$ is open. Since $\fT$ is unital, there exists a state on $\fT$ annihilating $\fK$, and this state cannot be absolutely continuous, so $\fz\neq I$.
\end{proof}

Our main technical tool is the following, which shows how to leverage the Gleason--Whitney property.

\begin{lemma}
  \label{L:approxGW}
  Let $\W$ be a von Neumann algebra.
  Let $\fT\subset\W$ be a unital $\rC^*$-algebra  and let $\A\subset \fT$ be a unital subalgebra. Assume that $\fT$ admits an open Lebesgue projection $\fz$, and that the inclusion $\A\subset \fT$ has the Gleason--Whitney property relative to $\W$.  Let $\phi$ be an absolutely continuous state on $\A$.
  Then,
  \begin{equation*}
    \sup \{ \re \varphi(a): a \in \mathcal{A} \text{ with } \re a \le \fz-I\} = 0.
  \end{equation*}
\end{lemma}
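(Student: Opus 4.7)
The plan is to prove both directions (upper bound $\le 0$ and lower bound $\ge 0$) separately. The upper bound uses the Gleason--Whitney property to lift $\phi$ to an absolutely continuous state on $\fT$, and the lower bound uses Hahn--Banach separation combined with openness of $\fz$ to derive a contradiction.

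For the upper bound, let $\psi\in \fT^*$ be a Hahn--Banach extension of $\phi$. Because $\phi\in \AC_\W(\A)$ and $\A\subset \fT$ has the Gleason--Whitney property, $\psi\in \AC_\W(\fT)$, so by Theorem \ref{T:Lebproj} we have $\psi=\fz\cdot \psi$; evaluating at $I$ gives $\widehat\psi(\fz)=1$, where $\widehat\psi$ denotes the canonical weak-$*$ continuous extension of $\psi$ to $\fT^{**}$. For any $a\in \A$ with $\re a\le \fz-I$, positivity of $\widehat\psi$ gives
\[
\re \varphi(a) = \psi(\re a) = \widehat\psi(\re a) \le \widehat\psi(\fz - I) = 0.
\]

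For the lower bound, I would argue by contradiction: suppose the supremum is strictly negative, so there exists $\epsilon>0$ with $\re\varphi(a)\le -\epsilon$ for every $a\in \A$ with $\re a\le \fz-I$. In the real Banach space $\fT_{sa}\oplus \bR$, form the real subspace $V=\{(\re a,\re \varphi(a)): a\in \A\}$ and the convex set $R=\{(y,r)\in \fT_{sa}\oplus \bR: y\le \fz-I \text{ in } \fT^{**},\ r>-\epsilon\}$. The assumption says $V\cap R=\varnothing$, and $R$ has non-empty interior (a small neighborhood of $(-2I,0)$ lies in $R$). Hahn--Banach separation then produces a non-zero continuous linear functional $\Phi(y,r)=\omega(y)+tr$ with $\Phi|_V=0$ and $\Phi|_R\ge 0$. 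Probing $R$ with points $(-\lambda p-I,r)$ for $p\ge 0$, $\lambda\to\infty$, and $r\to\infty$ shows $t\ge 0$ and $\omega':=-\omega\ge 0$. Setting $a=I$ in $\Phi|_V=0$ yields $t=\omega'(I)$; non-triviality forces $t>0$, so $\omega'':=\omega'/t$ is a state on $\fT$ with $\omega''(\re a)=\re \varphi(a)$ for all $a\in \A$. Using the identity $\im a=\re(-ia)$ applied to $-ia\in \A$, one checks $\omega''(a)=\varphi(a)$ on $\A$, so $\omega''$ extends $\varphi$.

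The Gleason--Whitney property now forces $\omega''\in \AC_\W(\fT)$, whence $\widehat{\omega''}(\fz)=1$ as above. Exploiting the openness of $\fz$, pick an increasing net $(e_\alpha)$ of positive contractions in $\fT$ with $e_\alpha \nearrow \fz$ in the weak-$*$ topology of $\fT^{**}$. Then $e_\alpha - I\le \fz-I$, so for any $\delta>0$ one has $(e_\alpha - I, -\epsilon+\delta)\in R$, and evaluating $\Phi\ge 0$ and dividing by $t>0$ yields
\[
\omega''(e_\alpha) \le 1-\epsilon+\delta.
\]
Passing to the limit in $\alpha$ via weak-$*$ continuity of $\widehat{\omega''}$ gives $\omega''(e_\alpha)\to \widehat{\omega''}(\fz)=1$, so $1\le 1-\epsilon+\delta$ for every $\delta>0$; contradiction. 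The main obstacle is the interplay between Hahn--Banach separation and the Gleason--Whitney property, arranged so that the abstractly produced separating functional is concretely a state extension of $\varphi$; once this is done, openness of $\fz$ provides the testing elements $e_\alpha-I\in \fT_{sa}$ that make the contradiction explicit.
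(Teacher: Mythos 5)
Your proof is correct, and for the substantive half it takes a genuinely different route from the paper's. The easy inequality ($\sup\le 0$) you handle essentially as the paper does, except that you invoke the Gleason--Whitney property there unnecessarily: since $\re a\le \fz-I\le 0$, any state extension of $\varphi$ to $\fT$ already gives $\re\varphi(a)\le 0$. For the hard inequality ($\sup\ge 0$), the paper fixes a net $(t_\alpha)$ of positive contractions increasing to $\fz$ and quotes a duality result of Arveson to rewrite $\sup\{\re\varphi(a):\re a\le t_\alpha-I\}$ as $\min\{\Phi(t_\alpha):\Phi\in\E_\varphi\}-1$, where $\E_\varphi$ is the weak-$*$ compact set of all state extensions of $\varphi$ to $\fT$; Gleason--Whitney makes every $\Phi\in\E_\varphi$ absolutely continuous, so $\Phi(t_\alpha)\to 1$ pointwise, and Dini's theorem upgrades this to uniform convergence on $\E_\varphi$. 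You instead argue by contradiction and run a Hahn--Banach separation in the self-adjoint part of $\fT$ direct-summed with $\bR$, which manufactures a \emph{single} state extension $\omega''$ of $\varphi$ satisfying $\omega''(e_\alpha)\le 1-\epsilon$ along the net witnessing openness of $\fz$; Gleason--Whitney forces $\omega''$ to be absolutely continuous, whence $\omega''(e_\alpha)\to\widehat{\omega''}(\fz)=1$, a contradiction. In effect you reprove the needed case of Arveson's duality from scratch, and by extracting one witness state rather than quantifying over all of $\E_\varphi$ you dispense with the compactness/Dini step entirely; the trade-off is that your argument is longer but fully self-contained. The delicate points all check out: your set $R$ is convex with interior (a ball around $(-2I,0)$), the probing correctly yields $t\ge 0$ and $\omega'\ge 0$, positivity together with $\omega'(I)=t$ rules out $t=0$, the resulting $\omega''$ is a norm-preserving (state) extension of $\varphi$ so the Gleason--Whitney property genuinely applies, and the passage from $\omega''(\re a)=\re\varphi(a)$ to $\omega''|_\A=\varphi$ via $-ia$ is valid.
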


\begin{proof}
  Let $c$ denote the supremum in the statement.
  If $\re a \le \fz - I$, then in particular $\re a \le 0$ and so $\re \varphi(a) \le 0$.
  Thus, we have to show that $c \ge 0$.

 By assumption, there is a net of positive contractions $(t_\alpha)$ in $\fT$ that increase to $\fz$ in the weak-$*$ topology of $\fT^{**}$.
 Let $\E_\phi$ denote the weak-$*$ compact  subset of states on $\fT$ that extend $\phi$.
 For each $\alpha$, we find that
 \begin{equation}
   \label{eqn:GW_approx}
   \begin{split}
     c &\ge \sup\{\re \phi(a):a\in \A \text{ such that }\re a\leq t_\alpha -I \} \\
       &=\min \{ \Phi(t_\alpha):\Phi\in \E_\phi\}-1;
   \end{split}
 \end{equation}
 see \cite[Proposition 6.2]{arveson2011} for the equality.

Since $\A\subset \fT$ has the Gleason--Whitney property, it follows that $\E_\phi$ consists entirely of absolutely continuous states on $\fT$. In particular, we see that
\[
 \lim_{\alpha}\Phi(t_\alpha)=\fz(\Phi)=1, \quad \Phi\in \E_\phi.
\]
By Dini's theorem, we conclude that the increasing net $(t_\alpha)$ converges uniformly to $1$ on $\E_\phi$, so that
\[
 \lim_{\alpha} \min \{ \Phi(t_\alpha):\Phi\in \E_\phi\}=1.
\]
In combination with \eqref{eqn:GW_approx}, we conclude that $c \ge 0$.
\end{proof}

The basic idea behind our construction is to extract a weak-$*$ limit of a sequence approximating the supremum above.
To ensure boundedness, we make use of the exponential function, and require the following elementary fact about norms of exponentials.

\begin{lemma}
  \label{L:vNI}
  Let $\fT$ be a unital $C^*$-algebra and let $a \in \fT$.
  \begin{enumerate}[\rm (a)]
    \item If $\re a \le 0$, then $\|e^a\| \le 1$.
    \item If $q \in \fT$ is a projection commuting with $a$ such that $\re a \le -q$,
      then $\|q e^a\| \le e^{-1}$.
  \end{enumerate}
\end{lemma}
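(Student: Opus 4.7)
\textbf{Plan for Lemma \ref{L:vNI}.}
For part (a), the plan is to argue via the Lie–Trotter type limit $e^a = \lim_{n\to\infty}(I + a/n)^n$ (which converges in norm in any Banach algebra). In any $C^*$-algebra, one has
\[
\|I + a/n\|^2 = \|(I + a^*/n)(I + a/n)\| = \bigl\|I + 2(\re a)/n + a^*a/n^2\bigr\|.
\]
Since $\re a \le 0$, the self-adjoint element inside is bounded above by $I + a^*a/n^2$, whose norm is at most $1 + \|a\|^2/n^2$. This gives $\|I + a/n\| \le (1 + \|a\|^2/n^2)^{1/2}$, and consequently
\[
\|e^a\| \le \liminf_{n\to\infty} \bigl(1 + \|a\|^2/n^2\bigr)^{n/2} = 1.
\]

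For part (b), the idea is to move the computation into the hereditary $C^*$-subalgebra $q\fT q$, which is a unital $C^*$-algebra with identity $q$. Since $q$ commutes with $a$, the element $b := qa = aq$ lies in $q\fT q$, and induction on $n$ using $q^2 = q$ gives $(qa)^n = qa^n$ for $n \ge 1$. Summing, one obtains
\[
q e^a = \sum_{n\ge 0}\frac{(qa)^n}{n!},
\]
where on the right-hand side $(qa)^0$ is to be read as $q$, the unit of $q\fT q$; that is, $qe^a$ equals the exponential of $b$ computed inside the unital algebra $q\fT q$. Because $q\fT q$ embeds isometrically into $\fT$, it suffices to bound this exponential in $q\fT q$.

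To conclude, I would combine two observations. First, since $q$ is self-adjoint and commutes with $a$, hence with $\re a$, one has $\re b = q(\re a)$; multiplying the inequality $\re a \le -q$ on both sides by the commuting positive element $q$ (or equivalently working in the joint commutative $C^*$-subalgebra generated by $q$ and $\re a$) yields $\re b \le -q$, so $\re(b + q) \le 0$ in $q\fT q$. Applying part (a) inside $q\fT q$ gives $\|e^{b+q}\|_{q\fT q} \le 1$. Second, $b$ and $q$ commute and $e^q = e\cdot q$ inside $q\fT q$, so $e^{b+q} = e\cdot e^b$, whence $\|e^b\|_{q\fT q} \le e^{-1}$, which is exactly the desired bound on $\|qe^a\|$.

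The only step requiring real care is the bookkeeping around the two different notions of unit (the unit $I$ of $\fT$ versus the unit $q$ of $q\fT q$); the combinatorics of $(qa)^n = qa^n$ together with the reinterpretation of $(qa)^0$ make this transparent. Everything else is a straightforward application of part (a) and the functional calculus for commuting self-adjoints.
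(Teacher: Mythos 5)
Your proof is correct, and it takes a genuinely different route from the paper's. For part (a) the paper simply represents $\fT$ on a Hilbert space and invokes the half-plane version of von Neumann's inequality (obtained from the disc version via the Cayley transform); your argument via the product formula $e^a=\lim_n (I+a/n)^n$ together with the $\rC^*$-identity $\|I+a/n\|^2=\|I+2(\re a)/n+a^*a/n^2\|\le 1+\|a\|^2/n^2$ is more elementary and stays entirely inside the abstract $\rC^*$-algebra, with no representation or dilation theory needed. For part (b) the paper again compresses the Hilbert space to $q\mathcal{H}$ and reapplies von Neumann's inequality, whereas you pass to the corner $q\fT q$ (the algebraic analogue of that compression), identify $qe^a$ with the exponential of $b=qa$ computed in $(q\fT q, q)$, and then cleverly reduce to part (a) by shifting $b\mapsto b+q$ and using $e^{b+q}=e\cdot e^{b}$; the order computation $\re b = q(\re a)q \le -q$ and the isometric embedding of the corner are handled correctly. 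What the paper's approach buys is brevity via a citable classical inequality; what yours buys is a self-contained, representation-free proof in which (b) is a formal consequence of (a). Both are sound.
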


\begin{proof}
  We may assume that $\fT \subset B(\mathcal{H})$ for some Hilbert space $\mathcal{H}$.
  Then part (a) is an immediate consequence of the half plane version of von Neumann's inequality,
  which follows from the disc version with the help of the Cayley transform;
  see \cite[Section 5.3, p.276]{vN1951}.

  For the proof of (b), notice that since $q$ commutes with $a$, we may replace $\mathcal{H}$ with $q \mathcal{H}$
  to achieve that $q=1$, in which the case the result again follows from the half plane version
  of von Neumann's inequality.
\end{proof}

We are now ready to prove the main result of this section.

\begin{theorem}\label{T:Lebesgue_projection_character}
Let $\W$ be a von Neumann algebra. Let $\fT\subset\W$ be a unital $\rC^*$-algebra  and let $\A\subset \fT$ be a unital subalgebra. Assume that $\fT$ admits an open Lebesgue projection $\fz$, and that the inclusion $\A\subset \fT$ has the Gleason--Whitney property relative to $\W$.  Let $\chi$ be an absolutely continuous character on $\A$. Then, there is a non-zero projection $p\in \A^{**}$ such that $p\leq \fz$ and $p(\chi)=1$.
\end{theorem}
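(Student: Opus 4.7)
My plan is to realize the sought-after projection by a double weak-$*$ limit: first of exponentials furnished by Lemma \ref{L:approxGW}, and then of a Cesaro average, the latter step promoting a mere norming element for $\chi$ into an actual idempotent.

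To begin, observe that $\chi$ is in particular a state, so Lemma \ref{L:approxGW} provides $a_k \in \A$ with $\re a_k \le \fz - I$ in $\fT^{**}$ and $\re \chi(a_k) > -1/k$. Unitality of $\A$ allows me to replace $a_k$ by $a_k - i(\Im \chi(a_k))\cdot I$, preserving the real-part bound and forcing $\chi(a_k) \in \bR$ with $\chi(a_k) \to 0$. Set $x_k := e^{a_k} \in \A$. Lemma \ref{L:vNI}(a) yields $\|x_k\| \le 1$, while multiplicativity of $\chi$ gives $\chi(x_k) = e^{\chi(a_k)} \to 1$. Since $I - \fz$ is a central projection in the von Neumann algebra $\fT^{**}$, it commutes with $a_k$, and $\re a_k \le -(I-\fz)$; applying Lemma \ref{L:vNI}(b) inside $\fT^{**}$ delivers $\|(I-\fz)x_k\|_{\fT^{**}} \le e^{-1}$. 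Let $p \in \A^{**}$ be a weak-$*$ cluster point of $(x_k)$. Using the weak-$*$ continuous and injective inclusion $\A^{**} \hookrightarrow \fT^{**}$ (the restriction map $\fT^* \to \A^*$ being Hahn--Banach surjective), together with lower semicontinuity of the norm and separate weak-$*$ continuity of multiplication, I obtain $\|p\| \le 1$, $\chi^{**}(p) = 1$ (where $\chi^{**}$ denotes the weak-$*$ continuous extension of $\chi$, which is still multiplicative by the computation $\Xi \cdot \chi = \chi^{**}(\Xi)\chi$), and $\|(I-\fz)p\|_{\fT^{**}} \le e^{-1}$.

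The main obstacle is that $p$ need not be an idempotent: multiplication in $\A^{**}$ is only separately weak-$*$ continuous, so there is no reason for $p^2$ to arise as a cluster point of $(x_k^2)$. I remedy this via ergodic averaging. Form $S_N := \tfrac{1}{N}\sum_{n=1}^N p^n \in \A^{**}$, a contraction with $\chi^{**}(S_N) = 1$. Centrality of $I-\fz$ in $\fT^{**}$ gives $(I-\fz)p^n = ((I-\fz)p)^n$, of norm at most $e^{-n}$, so $\|(I-\fz)S_N\|_{\fT^{**}} \le 1/(N(e-1)) \to 0$. The telescoping identity $S_N p - S_N = (p^{N+1}-p)/N$ yields $\|S_N p - S_N\| \le 2/N$. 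Passing $(S_N)$ to a weak-$*$ cluster point $q \in \A^{**}$, separate weak-$*$ continuity of multiplication forces $q p = q$, and hence by induction $q p^n = q$ for all $n \ge 1$; thus $qS_N = q$ identically, and a final weak-$*$ limit together with separate continuity delivers $q^2 = q \cdot \lim S_{N_\alpha} = \lim q S_{N_\alpha} = q$. The idempotent $q$ has norm at most $1$ inside $\A^{**} \subset \fT^{**} \subset B(K)$, so it is automatically self-adjoint and therefore a projection in $\fT^{**}$ lying in $\A^{**}$. Finally, $\chi^{**}(q) = 1$ ensures $q \neq 0$, and $(I-\fz)q = 0$ combined with the centrality of $\fz$ forces $q \le \fz$, yielding the projection asserted in the theorem.
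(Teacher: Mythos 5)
Your proposal is correct. The first half coincides with the paper's argument: both invoke Lemma \ref{L:approxGW} to produce $a_k$ with $\re a_k \le \fz - I$ and $\re\chi(a_k)\to 0$, exponentiate, apply Lemma \ref{L:vNI} to get $\|e^{a_k}\|\le 1$ and $\|(I-\fz)e^{a_k}\|\le e^{-1}$, normalize so that $\chi(e^{a_k})\to 1$ (you shift $a_k$ by an imaginary multiple of the identity; the paper multiplies $e^{a_k}$ by unimodular scalars --- both work), and pass to a weak-$*$ cluster point $p$. The divergence is in how the idempotent is extracted. The paper observes that the set $S=\{\Lambda\in\A^{**}: \|\Lambda\|\le 1,\ \|(I-\fz)\Lambda\|\le e^{-1},\ \Lambda(\chi)=1\}$ is a nonempty weak-$*$ compact multiplicative semigroup (using centrality of $\fz$ and multiplicativity of $\chi^{**}$) and cites an abstract theorem guaranteeing that such a semigroup contains an idempotent; the contractive idempotent is then a projection and $(I-\fz)p$ is an idempotent of norm less than one, hence zero. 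You instead run an explicit ergodic argument on the single element $p$: the Ces\`aro averages $S_N=\frac1N\sum_{n=1}^N p^n$ satisfy $\chi^{**}(S_N)=1$, $\|(I-\fz)S_N\|\to 0$, and $\|S_Np-S_N\|\le 2/N$, so a weak-$*$ cluster point $q$ satisfies $qp=q$, hence $qS_N=q$, hence $q^2=q$ by separate weak-$*$ continuity of the Arens product. Your route is self-contained and avoids the external semigroup reference, at the cost of a slightly longer computation; the paper's route is shorter but rests on the cited compact-semigroup idempotent theorem. All the auxiliary facts you use (multiplicativity of $\chi^{**}$ for the Arens product, compatibility of $\A^{**}\cong\A^{\perp\perp}\subset\fT^{**}$ with multiplication, weak-$*$ lower semicontinuity of the norm, self-adjointness of contractive idempotents) are standard and correctly deployed.
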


\begin{proof}
  Let
  \begin{equation*}
    S = \{ \Lambda \in \mathcal{A}^{**}: \|\Lambda\| \le 1, \|(I - \fz) \Lambda\| \le e^{-1} \text{ and } \Lambda(\chi) = 1 \}.
  \end{equation*}
  We first claim that $S \neq \emptyset$.
  To see this, we apply Lemma \ref{L:approxGW} to obtain a sequence $(a_n)$ in $\mathcal{A}$
  with $\re a_n \le \fz - I$ for all $n \in \mathbb{N}$ and
  $\lim_{n \to \infty} \re \chi(a_n) = 0$.
  Let $b_n = \exp(a_n)$.
  Then Lemma \ref{L:vNI} shows that
  $\|b_n\| \le 1$ and $\|(I - \fz) b_n\| \le e^{-1}$ for all $n \in \mathbb{N}$.
  Moreover, $\lim_{n \to \infty} |\chi(b_n)| = 1$.
  By replacing each $b_n$ with a suitable unimodular multiple of $b_n$, we may
  achieve that $\lim_{n \to \infty} \chi(b_n) = 1$.
  Thus, any weak-$*$ cluster point of $(b_n)$ in $\mathcal{A}^{**}$ belongs to $S$.

  Next, notice that $S$ is closed under multiplication since $\chi$ is a character.
  Moreover, $S$ is weak-$*$ compact.
  In this setting, \cite[Theorem 1.1]{FK1989} implies that $S$ contains an idempotent $p$,
  which has to be a projection since $\|p\| \le 1$.
  By definition of $S$, we see that $p(\chi) = 1$.
  Finally, using  that $\fz$ is central, it follows
  that $(I - \fz) p$ is an idempotent of norm strictly less than one,
  and hence $(I - \fz) p = 0$. Thus, $p \le \fz$.
\end{proof}

We single out the following immediate consequence.

\begin{corollary}\label{C:obstructionK}
Let $\fT\subset B(\H)$ be a unital $\rC^*$-algebra containing the compact operators. Let $\A\subset \fT$ be a unital subalgebra such that the inclusion $\A\subset \fT$ has the Gleason--Whitney property relative to $B(\H)$. If $\A$ admits an absolutely continuous character, then $\A^{**}$ contains a non-trivial projection.
\end{corollary}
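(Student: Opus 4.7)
The plan is to combine Proposition \ref{P:openLeb} with Theorem \ref{T:Lebesgue_projection_character}, since the hypotheses of the corollary are tailor-made for this. First, I would invoke Proposition \ref{P:openLeb} applied to the unital $\rC^*$-algebra $\fT \subset B(\H)$ containing $\fK(\H)$: this produces a non-trivial open Lebesgue projection $\fz \in \fT^{**}$ for $\fT$ relative to $B(\H)$. In particular, $\fz \neq I$.

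Next, let $\chi$ be an absolutely continuous character on $\A$. All hypotheses of Theorem \ref{T:Lebesgue_projection_character} are in place: $\W = B(\H)$ is a von Neumann algebra, $\fT$ is a unital $\rC^*$-subalgebra admitting the open Lebesgue projection $\fz$ from the previous step, and the inclusion $\A \subset \fT$ has the Gleason--Whitney property relative to $B(\H)$ by hypothesis. Applying the theorem yields a non-zero projection $p \in \A^{**}$ satisfying $p \le \fz$ and $p(\chi) = 1$.

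Finally, I need to verify that $p$ is non-trivial as a projection in $\A^{**}$, which only requires $p \neq 0$ and $p \neq I$. The first is already built into the conclusion of Theorem \ref{T:Lebesgue_projection_character}. For the second, I use the identification of $\A^{**}$ as a unital subalgebra of $\fT^{**}$: under this identification, the unit of $\A^{**}$ coincides with $I \in \fT^{**}$. Since $p \le \fz < I$ in $\fT^{**}$ and $\fz \neq I$, we conclude $p \neq I$ in $\A^{**}$.

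There is essentially no obstacle here; the work has all been done in the preceding two results. The only small point to be careful about is the matching of units, and the fact that the inequality $p \le \fz$ together with the non-triviality $\fz \neq I$ from Proposition \ref{P:openLeb} is exactly what separates $p$ from the identity of $\A^{**}$.
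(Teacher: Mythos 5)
Your proposal is correct and is exactly the paper's proof: the paper's argument for this corollary is a one-line combination of Proposition \ref{P:openLeb} and Theorem \ref{T:Lebesgue_projection_character}. Your additional check that $p \neq I$ (via $p \le \fz$ and $\fz \neq I$) is a sound and welcome elaboration of the word ``non-trivial'' that the paper leaves implicit.
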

\begin{proof}
Combine Proposition \ref{P:openLeb} and Theorem \ref{T:Lebesgue_projection_character}.
\end{proof}

We interpret the previous result as exhibiting an obstruction for an inclusion to satisfy the Gleason--Whitney property. We close this section with an example illustrating this obstruction.

\begin{example}\label{E:nilpotent}
Let $T\in B(\H)$ be a non-zero nilpotent operator, so that there is a positive integer $m \geq  2$ with $T^m=0$. Let $\A\subset B(\H)$ denote the unital subalgebra generated by $T$ . Let $v\in \ker T$ be a unit vector and let $\chi:\A\to\bC$ be defined as
\[
\chi(a)=\langle av,v\rangle, \quad a\in \A.
\]
Clearly, $\chi$ is an absolutely continuous character. 

If $a \in \mathcal{A}$, then $a = p(T)$ for some polynomial $p$ and so the spectral mapping theorem shows that $\sigma(a) = \{p(0)\}$ is a singleton.
This shows that $\mathcal{A}$ does not contain any non-trivial projections.
Because $\A$ is finite-dimensional, it follows that $\A^{**}$ contains no non-trivial projections either. In light of Corollary \ref{C:obstructionK}, we infer that the inclusion $\A\subset \fT$ cannot have the Gleason--Whitney property relative to $B(\H)$ for any unital $\rC^*$-algebra $\fT\subset B(\H)$ containing the compact operators.
\qed
\end{example}

\section{Examples}\label{S:examples}

\subsection{Multiplier algebras on the ball}\label{SS:multipliers} 
In this subsection, we examine various examples of inclusions of operator algebras arising naturally from a class of spaces of holomorphic functions on the ball. We briefly recall the details below, and refer the reader to \cite{DH2020} and to the works cited therein for additional details.

Fix an integer $d\geq 1$. Let $\bB_d\subset \bC^d$ denote the open unit ball and let $\bS_d$ be the unit sphere. Let $\H$ be a regular, unitarily invariant reproducing kernel Hilbert space on $\bB_d$, meaning that the reproducing kernel of $\mathcal{F}$ has the form
\begin{equation*}
  K(z,w) = \sum_{n=0}^\infty a_n \langle z,w \rangle^n
\end{equation*}
for some sequence $(a_n)$ of strictly positive numbers satisfying $\lim_{n \to \infty} \frac{a_n}{a_{n+1}} = 1$
and $a_0 = 1$.
The monomials $$\{z_1^{m_1}z_2^{m_2}\ldots z_d^{m_d}: m_j\geq 0\}$$ form an orthogonal basis for $\H$.  Important examples of such spaces include the Drury--Arveson space and the Hardy space in any dimension, along with the Dirichlet space.

We denote by $\M(\H)$ the multiplier algebra of $\H$, which always contains the polynomials. Accordingly, we let $\A(\H)\subset \M(\H)$ denote the norm-closure of the polynomials.  We write $\fT(\H)=\rC^*(\A(\H))$. For $f\in \M(\H)$, we write $M_f:\H\to \H$ for the corresponding multiplication operator,
and we identify $f$ with $M_f$.

It follows from \cite[Theorem 4.6]{GHX04} that this $\rC^*$-algebra contains the ideal $\fK(\H)$ of compact operators, and that the quotient $\fT(\H)/\fK(\H)$ is $*$-isomorphic to $\rC(\bS_d)$ via a map such that
 \[
 M_{z_j}+\fK(\H)\mapsto z_j|_{\bS_d}, \quad 1\leq j\leq d.
  \]

\begin{proposition}\label{P:MFTF}
The inclusions $\A(\H)\subset \fT(\H)$ and $\M(\H)\subset \rC^*(\M(\H))$ do not have the Gleason--Whitney property for states relative to $B(\H)$.
\end{proposition}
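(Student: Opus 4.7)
\emph{Plan.} In both cases we aim to apply Lemma \ref{L:GWquotient} with the character $\chi_{z_0}$ given by evaluation at an interior point $z_0 \in \bB_d$. This character is absolutely continuous because
\[
  \chi_{z_0}(M_f) = f(z_0) = \frac{\langle M_f K(\cdot,z_0), K(\cdot,z_0)\rangle}{\|K(\cdot,z_0)\|^2}
\]
exhibits it as the restriction of a weak-$*$ continuous vector state on $B(\H)$. What remains, in each case, is the quotient-norm bound $|\chi_{z_0}(M_f)| \le \|M_f+\fK(\H)\|$ that feeds into Lemma \ref{L:GWquotient}.

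For $\A(\H)\subset \fT(\H)$ this is immediate. Since the multiplier norm dominates the supremum on $\bB_d$, every $f \in \A(\H)$ is a uniform limit of polynomials and thus extends continuously to $\ol{\bB}_d$. The isomorphism $\fT(\H)/\fK(\H) \simeq C(\bS_d)$ recalled before the statement identifies $M_f+\fK(\H)$ with $f|_{\bS_d}$, so $\|M_f+\fK(\H)\| = \sup_{\bS_d}|f|$, while the maximum modulus principle gives $|f(z_0)| \le \sup_{\bS_d}|f|$.

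For $\M(\H)\subset \rC^*(\M(\H))$ we instead establish the stronger inequality $\|M_f+\fK(\H)\| \ge \|f\|_{\HB}$, which suffices since $|f(z_0)| \le \|f\|_{\HB}$ is automatic. Choose $z_n \in \bB_d$ with $|z_n|\to 1$ and $|f(z_n)| \to \|f\|_{\HB}$, available from $\|f\|_{\HB}=\lim_{r\to 1^-}\sup_{|z|=r}|f(z)|$, and set $h_n = K(\cdot,z_n)/\|K(\cdot,z_n)\|$. It is standard in this setting that $\|K(\cdot,z_n)\|\to\infty$ as $|z_n|\to 1$, so $\langle p, h_n\rangle = \ol{p(z_n)}/\|K(\cdot,z_n)\| \to 0$ for every polynomial $p$; together with $\|h_n\|=1$ and density of polynomials in $\H$, this forces $h_n \to 0$ weakly. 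For any compact $K$ one then has $\langle K h_n, h_n\rangle \to 0$ and $\langle M_f h_n, h_n\rangle = f(z_n)$, hence for each $n$,
\[
  \|M_f + K\| \ge |\langle(M_f+K)h_n,h_n\rangle| = |f(z_n) + \langle Kh_n,h_n\rangle|;
\]
letting $n\to\infty$ gives $\|M_f+K\| \ge \|f\|_{\HB}$, and the infimum over compact $K$ completes the verification.

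The main obstacle is the second inclusion: the quotient $\rC^*(\M(\H))/\fK(\H)$ has no concrete commutative description analogous to $\fT(\H)/\fK(\H) \simeq C(\bS_d)$, so we cannot read off $\|M_f+\fK(\H)\|$ as a supremum of $|f|$ on a fixed set. The workaround is to control the essential norm from below by testing against the weak-null sequence of normalized reproducing kernels approaching the boundary.
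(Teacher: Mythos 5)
Your overall strategy --- produce an absolutely continuous character dominated by the essential norm and feed it into Lemma \ref{L:GWquotient} --- is exactly the paper's, and your treatment of the first inclusion $\A(\H)\subset\fT(\H)$ is correct. The problem is in the second half. The claim that $\|K(\cdot,z_n)\|\to\infty$ as $|z_n|\to 1$ is \emph{not} standard in this setting: the hypotheses on the space are only that $K(z,w)=\sum_n a_n\langle z,w\rangle^n$ with $a_n>0$, $a_0=1$ and $a_n/a_{n+1}\to 1$, which permits $\sum_n a_n<\infty$ (take $a_n=(n+1)^{-2}$). In that case $K(w,w)=\sum_n a_n|w|^{2n}$ stays bounded as $|w|\to 1$, the normalized kernels $h_n$ need not be weakly null (they can converge in norm to a nonzero vector), so $\langle Kh_n,h_n\rangle\to 0$ fails for some compact $K$ and the inequality $\|M_f+\fK(\H)\|\ge\|f\|_{\HB}$ is not established. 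Your argument for $\M(\H)\subset\rC^*(\M(\H))$ therefore only covers the subclass of spaces with unbounded kernel (Hardy, Drury--Arveson, Dirichlet, \dots), not the full class in the statement.

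The paper sidesteps this by testing against the normalized monomials $e_m=z_1^m/\|z_1^m\|$ rather than reproducing kernels. Since distinct monomials are orthogonal, $(e_m)$ is an orthonormal sequence, hence automatically weakly null with no hypothesis on the kernel, and $\langle M_f e_m,e_m\rangle=f(0)$ \emph{exactly} for every $f\in\M(\H)$, because the inner product picks out the constant Taylor coefficient of $f$. This yields $|f(0)|\le\|M_f+\fK(\H)\|$ for all $f\in\M(\H)$ in one stroke, and both inclusions then follow simultaneously from Lemma \ref{L:GWquotient}. If you replace your kernel sequence by this monomial sequence (and the character $\chi_{z_0}$ by evaluation at the origin), your proof closes.
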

\begin{proof}
We define a state $\phi$ on $\M(\H)$ as
\[
\phi(M_f)=f(0),\quad f\in \M(\H).
\]
For each positive integer $m$, we let $e_m=z_1^m/\|z_1^m\|$. It is easily verified that
\[
\langle fe_m,e_m \rangle=f(0), \quad f\in \M(\H)
\]
which shows that $\phi$ is absolutely continuous. 

Next, the orthonormal sequence $(e_m)$ converges to $0$ in the weak topology of $\H$.
Thus, if $K$ is a compact operator on $\H$, we find
\[
f(0)=\lim_{m\to\infty}\langle (M_f+K)e_m,e_m\rangle, \quad f\in \M(\H).
\]
We then see that
\[
|\phi(M_f)|\leq \|M_f+\fK(\H)\|, \quad f\in \M(\H).
\]
Lemma \ref{L:GWquotient} then implies that the inclusions $\A(\H)\subset \fT(\H)$ and $\M(\H)\subset \rC^*(\M(\H))$ do not have the Gleason--Whitney property for states relative to $B(\H)$. 
\end{proof}

In the last result, we note that $\A(\H)$ admits a Lebesgue decomposition by  \cite[Theorem 3.2]{DH2020}. Being a $\rC^*$-algebra containing the compacts, so does $\fT(\H)$ by Lemma \ref{L:C*decomp}.
Hence, Theorem \ref{T:GWcompat} implies that the Lebesgue decompositions of $\A(\H)$ and of $\fT(\H)$ are not compatible.

Functionals on $\fT(\H)$ with absolutely continuous restrictions to $\A(\H)$ are  sometimes also called \emph{Henkin} and have been the object of recent study \cite{CT2022ncHenkin},\cite{CMT2022}.

\subsection{Uniform algebras}\label{SS:unifalg}

The most classical example of an inclusion of operator algebras with the Gleason--Whitney property is $\Hinf\subset L^\infty(\bT,\sigma)$ (see Theorem \ref{T:classicalGW}). As such, it is natural to seek further examples among the class of \emph{uniform algebras}: unital subalgebras of commutative $\rC^*$-algebras. The purpose of this subsection is to explore this possibility. 

First, we show how the classical theorem can be used to identify other inclusions with the Gleason--Whitney property.

\begin{example}\label{E:douglasalg}
Let $\A\subset  L^\infty(\bT,\sigma)$ be a subalgebra containing $\Hinf$; this is usually summarized by saying that $\A$ is a  \emph{Douglas algebra} \cite{sarason1973}. Theorem \ref{T:classicalGW} along with Corollary \ref{C:GWtriple} implies that the inclusions $H^\infty(\bD)\subset \A$ and $\A\subset L^\infty(\bT,\sigma)$ both have the Gleason--Whitney property relative to $L^\infty(\bT,\sigma)$. 
\qed
\end{example}

The rest of this subsection will illustrate that few instances of this phenomenon can be found among other standard examples.

For each $d\geq 1$, we denote by $\rA(\bB_d)$ the \emph{ball algebra}, that is the norm closure of the polynomials in $\rC(\bS_d)$. Alternatively, $\AB$ is the closed unital subalgebra of $\rC(\bS_d)$ consisting of functions that extend holomorphically to $\bB_d$.

\begin{proposition}\label{P:ballalgebra}
Let $\sigma_d$ denote the unique, rotation invariant, regular Borel probability measure on $\bS_d$.
Then, the inclusion $\AB\subset \rC(\bS_d)$ has the Gleason--Whitney property relative to $L^\infty(\bS_d,\sigma_d)$ if and only if $d=1$.
\end{proposition}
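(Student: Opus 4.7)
The plan is to treat the two directions separately. For $d=1$, I would deduce the Gleason--Whitney property from the classical F.\ and M.\ Riesz theorem; for $d\geq 2$, I would produce an explicit singular Henkin-type measure that violates the property.

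For the $d=1$ direction, let $\phi \in \ac_{L^\infty(\bT,\sigma)}(\rA(\bD))$. By weak-$*$ continuity of an extension to $L^\infty(\bT,\sigma)$, there exists $f\in L^1(\bT,\sigma)$ with $\phi(g)=\int_{\bT} g f\,d\sigma$ for every $g\in\rA(\bD)$. Let $\nu$ be any Hahn--Banach extension of $\phi$ to $\rC(\bT)$, which I would view as a complex Borel measure via the Riesz representation theorem. The measure $\nu - f\,d\sigma$ annihilates $\rA(\bD)$, so the F.\ and M.\ Riesz theorem forces it to be absolutely continuous with respect to $\sigma$. Hence $\nu$ itself is absolutely continuous, establishing the Gleason--Whitney property.

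For the $d\geq 2$ direction, I would consider the probability measure $\mu$ on $\bS_d$ obtained as normalized arclength on the circle
\[
\Gamma = \{(e^{i\theta},0,\ldots,0):\theta\in[0,2\pi]\}.
\]
Because $d\geq 2$, the set $\Gamma$ is $\sigma_d$-null, so $\mu$ is singular with respect to $\sigma_d$ and is therefore not absolutely continuous as a functional on $\rC(\bS_d)$. On the other hand, for $f\in\AB$ the slice $\zeta\mapsto f(\zeta,0,\ldots,0)$ is holomorphic on $\bD$ and continuous on $\overline{\bD}$, so the one-variable mean value property yields
\[
\int f\,d\mu \;=\; f(0) \;=\; \int f\,d\sigma_d .
\]
Thus $\mu|_{\AB}$ coincides with $\sigma_d|_{\AB}$, which is absolutely continuous. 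Since $\mu$ and $\mu|_{\AB}$ are both states of unital algebras, they have norm one, so $\mu$ is a Hahn--Banach extension of the absolutely continuous functional $\mu|_{\AB}$ that fails to be absolutely continuous. This breaks the Gleason--Whitney property.

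Neither direction presents a genuine technical obstacle: both rely on classical facts from one- and several-variable function theory. The only step requiring some care is the slice mean-value identity in the $d\geq 2$ case, which reduces to applying the one-variable mean value property on $\bD$; the rest is essentially a packaging of F.\ and M.\ Riesz on the one hand, and of the standard observation that Lebesgue measure on a slice circle is a singular representing measure for evaluation at the origin on the other.
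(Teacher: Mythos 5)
Your proposal is correct and follows essentially the same route as the paper: for $d=1$ it invokes the F.\ and M.\ Riesz theorem to show every extension to $\rC(\bT)$ of an absolutely continuous functional is absolutely continuous, and for $d\geq 2$ it uses normalized arclength on the slice circle $\{(\zeta,0,\dots,0):\zeta\in\bT\}$, which is a singular representing measure for evaluation at the origin, exactly the counterexample used in the paper. The only cosmetic difference is that you spell out the $L^1$-density and the annihilating-measure step in the $d=1$ case, which the paper leaves implicit.
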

\begin{proof}
Assume first that $d=1$. Let $\phi$ be an absolutely continuous functional on $\AD$. By the F. and M. Riesz theorem \cite[page 47]{hoffman1988}, we see that any extension of $\phi$ to $\rC(\bT)$ must be absolutely continuous as well. Thus, the inclusion $\AD\subset \rC(\bT)$ has the Gleason--Whitney property relative to $L^\infty(\bT,\sigma_1)$.

Assume now that $d>1$.  Let $\psi$ be the state on $\rC(\bS_d)$ defined as
\[
  \psi(f)=\int_{\bT}f(\zeta,0,\ldots,0)d\sigma_1(\zeta), \quad f\in \rC(\bS_d).
\]
Cauchy's formula implies that
\[
\psi(f)=f(0)=\int_{\bS_d}fd\sigma_d, \quad f\in \AB
\]
and as such the restriction of $\psi$ to $\AB$ is absolutely continuous relative to $L^\infty(\bS_d,\sigma_d)$. On the other hand, when $d>1$ it is clear that $\sigma_1$ is not absolutely continuous with respect to $\sigma_d$ (in the sense of measures), so $\psi$ is not an absolutely continuous state on $\rC(\bS_d)$. We conclude that the inclusion $\AB\subset \rC(\bS_d)$ does not have the Gleason--Whitney property relative to $L^\infty(\bS_d,\sigma_d)$. 
\end{proof}

To analyze further examples, the following general principle will be useful.
\begin{proposition}\label{P:unifalgGW}
  Let $\A$ be a uniform algebra and let $\pi:\A\to B(\H)$ be a unital completely isometric homomorphism. Let $\fT\subset B(\H)$ be a unital $\rC^*$-algebra containing both $\pi(\A)$ and the ideal of compact operators on $\H$. If $\rC^*(\pi(\mathcal{A}))$ either contains no non-zero compact operator or is irreducible, then  the inclusion $\pi(\A)\subset \fT$ does not have the Gleason--Whitney property for states relative to $B(\H)$. 
\end{proposition}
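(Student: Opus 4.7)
The plan is to apply Lemma \ref{L:GWquotient}: I want to produce an absolutely continuous state $\phi$ on $\pi(\A)$ satisfying $|\phi(a)|\leq \|a+\fK(\H)\|$ for all $a\in \pi(\A)$. Any vector state $\omega_v(T)=\langle Tv,v\rangle$ on $B(\H)$ is weak-$*$ continuous, so its restriction to $\pi(\A)$ is automatically absolutely continuous. Consequently the whole task will reduce to showing that the quotient map $q\colon B(\H)\to B(\H)/\fK(\H)$ is isometric on $\pi(\A)$; one then simply takes $\phi=\omega_v|_{\pi(\A)}$ for any unit vector $v\in\H$.

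For the two hypotheses: if $\rC^*(\pi(\A))\cap \fK(\H)=\{0\}$, then $q$ restricts to an injective $*$-homomorphism on $\rC^*(\pi(\A))$, hence to an isometry, and we are done. If instead $\rC^*(\pi(\A))$ is irreducible but intersects $\fK(\H)$ non-trivially, then, since any non-zero $\rC^*$-algebra of compact operators acting irreducibly on $\H$ must contain all of $\fK(\H)$, we obtain $\fK(\H)\subset \rC^*(\pi(\A))$. This is the one remaining subcase to treat.

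To address it I would exploit the uniform-algebra hypothesis through the $\rC^*$-envelope. Because $\pi(\A)$ is commutative, its $\rC^*$-envelope is commutative, so the universal property yields a surjective $*$-homomorphism $\rho\colon \rC^*(\pi(\A))\to \rC^*_{\mathrm{env}}(\pi(\A))$ that is completely isometric on $\pi(\A)$. Since the target is abelian, $\ker(\rho)$ contains every commutator $[x,y]$ from $\rC^*(\pi(\A))$; in particular, for matrix units $e_{ij}$ in any two-dimensional subspace of $\H$, the identity $e_{12}=[e_{12},e_{22}]$ gives a non-zero element of $\ker(\rho)\cap\fK(\H)$. The ideal generated by such a rank-one operator in the irreducible ambient algebra $\rC^*(\pi(\A))$ is all of $\fK(\H)$ by the standard transitivity argument, so $\fK(\H)\subseteq \ker(\rho)$. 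Consequently $\rho$ factors as $\widetilde\rho\circ q$ with $\widetilde\rho$ completely contractive, and since $\rho|_{\pi(\A)}$ is completely isometric, so must be $q|_{\pi(\A)}$. This produces $\|a+\fK(\H)\|=\|a\|$ on $\pi(\A)$ and completes the argument. The main obstacle I anticipate is precisely the verification that $\fK(\H)\subseteq \ker(\rho)$: the commutativity of the codomain places only commutators in the kernel, and one must combine the commutator structure of $\fK(\H)$ with the irreducibility of $\rC^*(\pi(\A))$ to amplify this single rank-one element into the entire compact ideal.
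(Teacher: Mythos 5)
Your proof is correct, and in the irreducible case it takes a genuinely different route from the paper. The paper handles that case by contradiction: assuming the Gleason--Whitney property for states, it invokes Theorem \ref{T:GWBH} (whose proof rests on Arveson's boundary theorem) to conclude that the identity representation of $\rC^*(\pi(\A))$ has the unique extension property, hence that $\rC^*(\pi(\A))$ is the $\rC^*$-envelope of $\A$ and therefore a quotient of a commutative $\rC^*$-algebra --- contradicting $\fK(\H)\subset\rC^*(\pi(\A))$. You instead argue directly: the canonical surjection $\rho$ onto the (commutative) $\rC^*$-envelope kills all commutators, the identity $e_{12}=[e_{12},e_{22}]$ places a rank-one operator in $\ker\rho$, and irreducibility (equivalently, simplicity of $\fK(\H)$) forces $\fK(\H)\subseteq\ker\rho$, so $\rho$ factors through the Calkin quotient and $q$ is isometric on $\pi(\A)$. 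This unifies both cases under a single application of Lemma \ref{L:GWquotient} to a vector state, avoids Arveson's boundary theorem entirely, and even dispenses with the paper's final reduction from $\rC^*(\pi(\A))$ back to $\fT$; what it costs is a direct appeal to the existence and universal property of the $\rC^*$-envelope, which the paper only uses through the citation of \cite{dritschel2005}.

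One imprecision worth fixing: the sentence ``because $\pi(\A)$ is commutative, its $\rC^*$-envelope is commutative'' is not a valid implication in general --- a commutative operator algebra can have a noncommutative $\rC^*$-envelope (e.g.\ the unital algebra generated by a nonzero nilpotent). What you actually need, and what your opening clause correctly signals, is the uniform-algebra hypothesis: $\A$ embeds completely isometrically in a commutative $\rC^*$-algebra $\C$, so by the universal property the envelope is a quotient of $\C$ and hence commutative. With that justification substituted, the argument is complete.
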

\begin{proof}
  Assume first that $\rC^*(\pi(\A))$ contains no non-zero compact operator. Let $q:\fT\to \fT/\fK(\H)$ denote the natural quotient map. The $*$-homomorphism $q|_{\rC^*(\pi(\A))}$ is injective, and hence completely isometric.  Applying Lemma \ref{L:GWquotient} to any absolutely continuous state on $\pi(\mathcal{A})$ implies that the inclusion $\pi(\A)\subset \fT$ does not have the Gleason--Whitney property for states relative to $B(\H)$.

 The remaining case is when the intersection $\rC^*(\pi(\A))\cap \fK(\H)$ is non-trivial and $\rC^*(\pi(\mathcal{A}))$ is irreducible. Then $\fK(\H)\subset \rC^*(\pi(\A))$ \cite[Corollary I.10.4]{davidson1996} so in particular $\rC^*(\pi(\A))$ is not commutative. Recall now that since $\A$ is a uniform algebra, there is a commutative $\rC^*$-algebra $\C$ containing $\A$.
 Assume that the inclusion $\pi(\A)\subset \rC^*(\pi(\A))$ has the Gleason--Whitney property for states relative to $B(\H)$. Invoking Theorem \ref{T:GWBH}, we see that the identity representation of $\rC^*(\pi(\A))$ has the unique extension property with respect to $\pi(\A)$. This implies that $\rC^*(\pi(\A))$ is the $\rC^*$-envelope of $\A$,
 and hence is a quotient of $\C$; see \cite[Theorem 4.1]{dritschel2005}.
 In particular, $\rC^*(\pi(\A))$ is commutative, which is absurd. We conclude that the inclusion $\A\subset \rC^*(\pi(\A))$ does not have the Gleason--Whitney property for states relative to $B(\H)$, so that neither does $\A\subset \fT$.
\end{proof}

\begin{example}
  \label{E:Linfty}
  Let $\pi:L^\infty([0,1]) \to B(L^2([0,1]))$ be the usual representation of $L^\infty([0,1])$ as multiplication operators. Since no non-zero multiplication operator is compact, Proposition \ref{P:unifalgGW} shows that the inclusion $\pi(L^\infty([0,1])) \subset B(L^2([0,1]))$ does not have the Gleason--Whitney property for states relative to $B(L^2([0,1]))$.
\end{example}

The next example shows how to use Proposition \ref{P:unifalgGW} to recover the conclusion of Proposition \ref{P:MFTF}, at least for the Hardy space on the ball.

\begin{example}\label{E:discalgBH}
  Let $\H = H^2(\mathbb{B}_d)$ be the Hardy space on the ball; this is an instance of the class of spaces considered in Subsection \ref{SS:multipliers}. In particular, $\rC^*(\A(\H))$ contains the ideal of compact operators on $\H$, and hence is irreducible.
  It is well known that $A(\mathcal{H}) = A(\mathbb{B}_d)$ is a uniform algebra.
  Applying Proposition \ref{P:unifalgGW} to the map $\pi: A(\mathcal{H}) \to B(\mathcal{H}), f \mapsto M_f$,
  and identifying $A(\mathcal{H})$ with its image under $\pi$ as before, it
  follows that the inclusion $\A(\H) \subset \fT(\H)$ does not have the Gleason--Whitney property for states relative to $B(\H)$. 
\qed
\end{example}

The next development requires the following technical tool, inspired by  an idea from \cite{sarason1973}.
 
\begin{lemma}\label{L:sarason}
Let $X$ be a compact Hausdorff space and let $\mu$ be a regular Borel probability measure on $X$.
Assume that $K \subset X$ is a closed, nowhere dense subset with $\mu(K) > 0$.
Let $\phi$ be a state on $\rC(X)$.  Then, there is a state $\psi$ on $L^\infty(X,\mu)$ extending $\phi$ with the property that 
\[
     \psi (( 1 - \chi_K) f) = \psi(f), \quad f\in L^\infty(X,\mu)
    \]
    where $\chi_K$ denotes the characteristic function of $K$.
\end{lemma}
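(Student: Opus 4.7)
The plan is to reduce the conclusion to the single identity $\psi(\chi_K) = 0$ and then produce such a state via Hahn--Banach. Indeed, if $\psi$ is a state on $L^\infty(X,\mu)$ with $\psi(\chi_K) = 0$, then the Cauchy--Schwarz inequality for states, applied with $a = \chi_K$ and $b = f$, gives
\[
  |\psi(\chi_K f)|^2 \le \psi(\chi_K^2)\psi(f^* f) = \psi(\chi_K)\psi(|f|^2) = 0,
\]
whence $\psi((1-\chi_K) f) = \psi(f) - \psi(\chi_K f) = \psi(f)$ for every $f \in L^\infty(X,\mu)$. The task therefore reduces to producing a state extension of $\phi$ which vanishes on $\chi_K$.

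To construct such an extension, I would introduce the subspace
\[
  \mathcal{N} = \rC(X) + \mathbb{C}\chi_K \subset L^\infty(X,\mu)
\]
and define $\psi_0: \mathcal{N} \to \mathbb{C}$ by $\psi_0(f + \alpha\chi_K) = \phi(f)$. Provided $\psi_0$ is well defined with $\psi_0(1) = \|\psi_0\| = 1$, the Hahn--Banach theorem extends it to a norm-one linear functional $\psi$ on $L^\infty(X,\mu)$, which is automatically a state since $L^\infty(X,\mu)$ is a unital C*-algebra and $\psi(1) = \|\psi\| = 1$. By construction, $\psi(\chi_K) = \psi_0(0 + 1\cdot\chi_K) = \phi(0) = 0$, and the Cauchy--Schwarz step above then yields the required identity.

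The main obstacle I anticipate is the verification of the well-definedness and norm estimate for $\psi_0$. Suppose $f_1 + \alpha_1 \chi_K = f_2 + \alpha_2\chi_K$ in $L^\infty(X,\mu)$; then $f_1 - f_2 \in \rC(X)$ equals $(\alpha_2 - \alpha_1)\chi_K$ $\mu$-almost everywhere, so the continuous function $f_1 - f_2$ vanishes $\mu$-a.e.\ on $X \setminus K$. Here I would exploit that $K$ is nowhere dense to conclude that $X \setminus K$ is open and dense in $X$, and then combine this with the support properties of $\mu$ and continuity to force $f_1 \equiv f_2$ on $X$; the hypothesis $\mu(K) > 0$ then forces $\alpha_1 = \alpha_2$. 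The same density/support considerations yield the norm inequality $\|f + \alpha\chi_K\|_\infty \ge \|(1-\chi_K)f\|_\infty = \|f\|_{\rC(X)}$, so $|\psi_0(f + \alpha\chi_K)| = |\phi(f)| \le \|f\|_{\rC(X)} \le \|f + \alpha\chi_K\|_\infty$, while $\psi_0(1) = \phi(1) = 1$ completes the norm-one assertion.
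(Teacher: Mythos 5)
Your argument is correct and follows essentially the same route as the paper: one defines a unital contractive functional on a subspace containing $\rC(X)$ and $\chi_K$ that sends $\chi_K$ to $0$, extends it by Hahn--Banach to a state, and then upgrades $\psi(\chi_K)=0$ to $\psi(\chi_K f)=0$ for all $f$ (you via Cauchy--Schwarz for states, the paper via the multiplicative domain of the projection $1-\chi_K$, which is the same fact). The only point worth making explicit in your well-definedness and norm estimates --- equally implicit in the paper's proof and in the word ``extending'' in the statement --- is that $\mu$ must have full support, so that the essential supremum of a continuous function over the dense open set $X\setminus K$ coincides with its supremum over all of $X$.
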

\begin{proof}
 Consider the subspace $V \subset L^\infty(X,\mu)$ consisting of all functions
  of the form $g \chi_K + h$ with $g, h \in \rC(X)$. Given $g,h\in \rC(X)$, note that $\|g\chi_K+h\|$ dominates the essential supremum of $h$ over $X\setminus K$. This latter set is dense by choice of $K$, so by continuity of $h$ we obtain that 
  $\|h\| \le
  \|g \chi_K + h\|$. We may thus define a unital contractive linear functional $\psi_0$ on $V$ as
  \begin{equation*}
    \psi_0(g \chi_K + h) = \phi(h), \quad g,h\in \rC(X).
  \end{equation*}
  Choose a Hahn--Banach extension $\psi$ of $\psi_0$ to $L^\infty(X,\mu)$. Then, $\psi$ is a state extending $\phi$ and $\psi (1 - \chi_K) = \phi(1)= 1$. Since $1-\chi_K$ is a projection in $L^\infty(X,\mu)$, it follows, for instance from a multiplicative domain argument (see \cite[Theorem 3.18]{paulsen2002}) that
  \begin{equation*}
    \psi (( 1 - \chi_K) f) = \psi(f), \quad f\in L^\infty(\bT,\sigma). \qedhere
  \end{equation*}
 \end{proof}

We now give two applications of this lemma.

\begin{proposition}\label{P:C(X)GW}
Let $X$ be a compact Hausdorff space and let $\mu$ be a regular Borel probability measure on $X$.
Assume that there exists a closed, nowhere dense subset $K \subset X$ with $\mu(K) > 0$.
Then, the inclusion $\rC(X)\subset L^\infty(X,\mu)$ does not have the Gleason--Whitney property for states relative to $L^\infty(X,\mu)$.
\end{proposition}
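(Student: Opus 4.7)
The plan is to exhibit a concrete state on $L^\infty(X,\mu)$ that restricts to an absolutely continuous functional on $\rC(X)$ but is itself not weak-$*$ continuous, directly contradicting the Gleason--Whitney property for states. The key ingredient is already provided by Lemma \ref{L:sarason}, so the argument is short.

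First, I would take $\phi$ to be the canonical integration state on $\rC(X)$, namely
\[
\phi(f) = \int_X f \, d\mu, \quad f \in \rC(X).
\]
This $\phi$ is absolutely continuous relative to $L^\infty(X,\mu)$ because it extends to the weak-$*$ continuous functional $g\mapsto \int_X g\,d\mu$ on $L^\infty(X,\mu)$. Applying Lemma \ref{L:sarason} to this $\phi$ with the given closed, nowhere dense set $K$, I obtain a state $\psi$ on $L^\infty(X,\mu)$ that extends $\phi$ and satisfies
\[
\psi((1-\chi_K)f) = \psi(f), \quad f \in L^\infty(X,\mu).
\]
Taking $f = 1$, this yields $\psi(\chi_K) = 0$.

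Next, I would argue that $\psi$ is not weak-$*$ continuous. Assume toward a contradiction that it is; then there exists $h \in L^1(X,\mu)$ with $h \geq 0$ and $\int_X h\, d\mu = 1$ such that $\psi(g) = \int_X g h \, d\mu$ for all $g \in L^\infty(X,\mu)$. Since $\psi$ extends $\phi$, we have $\int_X f h\, d\mu = \int_X f \, d\mu$ for every $f \in \rC(X)$. By the uniqueness clause in the Riesz representation theorem for regular Borel measures on the compact Hausdorff space $X$, the measures $h\,d\mu$ and $d\mu$ coincide, so $h = 1$ $\mu$-almost everywhere. But then $\psi(\chi_K) = \mu(K) > 0$, contradicting $\psi(\chi_K) = 0$. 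Hence $\psi$ is not absolutely continuous relative to $L^\infty(X,\mu)$, and the inclusion $\rC(X) \subset L^\infty(X,\mu)$ fails the Gleason--Whitney property for states.

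There is essentially no obstacle: the hard work has been absorbed into Lemma \ref{L:sarason}, and the remaining step is simply the observation that the only weak-$*$ continuous extension of $\phi$ to $L^\infty(X,\mu)$ is integration against $\mu$ itself, which assigns mass $\mu(K) > 0$ to $\chi_K$ and therefore disagrees with the state produced by Lemma \ref{L:sarason}.
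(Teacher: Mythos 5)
Your proposal is correct and follows essentially the same route as the paper: integrate against $\mu$ to get the absolutely continuous state $\phi$ on $\rC(X)$, invoke Lemma \ref{L:sarason} to produce an extension $\psi$ killing $\chi_K$, and conclude that $\psi$ cannot be weak-$*$ continuous. The only cosmetic difference is that you justify uniqueness of the weak-$*$ continuous extension via the Riesz representation theorem, while the paper cites the weak-$*$ density of $\rC(X)$ in $L^\infty(X,\mu)$; both are valid instances of the same uniqueness fact.
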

\begin{proof}
Let $\theta$ be the absolutely continuous state on $L^\infty(X,\mu)$ defined as
\[
\theta(f)=\int_X fd\mu,\quad f\in L^\infty(X,\mu).
\]
Define $\phi=\theta|_{\rC(X)}$. Apply Lemma \ref{L:sarason} to find a state  $\psi$ on $L^\infty(X,\mu)$ extending $\phi$ with the property that 
\[
     \psi (( 1 - \chi_K) f) = \psi(f), \quad f\in L^\infty(X,\mu).
    \]
 Since $\rC(X)$ is weak-$*$ dense in $L^\infty(X,\mu)$, if $\psi$ were absolutely continuous then it would necessarily coincide with $\theta$. However, this is not the case as $\psi(\chi_K)=0$ while $\theta(\chi_K)=\mu(K)>0$.
\end{proof}

With a bit more care, Lemma \ref{L:sarason} can also be used to show the following.
This result is also contained, at least implicitly, in the work of Miller--Olin--Thomson \cite{MOT86}; see Remark \ref{rem:MOT-GW} below.

\begin{proposition}\label{P:GWADHinf}
The inclusion $\AD\subset \Hinf$ does not have the Gleason--Whitney property for states relative to $L^\infty(\bT,\sigma)$.
\end{proposition}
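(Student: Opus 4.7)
The strategy is to witness the failure of the Gleason--Whitney property for states by exhibiting a state on $H^\infty(\mathbb{D})$ whose restriction to $\rA(\mathbb{D})$ is absolutely continuous relative to $L^\infty(\mathbb{T}, \sigma)$ while the state itself is not. The natural starting point is the absolutely continuous state $\phi(f) = \int_{\mathbb{T}} f \, d\sigma$ on $\rC(\mathbb{T})$, whose restriction to $\rA(\mathbb{D})$ is evaluation at the origin. To manufacture a pathological Hahn--Banach extension, I would apply Lemma \ref{L:sarason} with $X = \mathbb{T}$, $\mu = \sigma$, and $K \subset \mathbb{T}$ a fat Cantor set (closed, nowhere dense, with $\sigma(K) > 0$). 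This produces a state $\psi$ on $L^\infty(\mathbb{T}, \sigma)$ that extends $\phi$ and satisfies $\psi(\chi_K) = 0$.

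The candidate witness is $\rho = \psi|_{H^\infty(\mathbb{D})}$. By construction $\rho$ is a state whose further restriction to $\rA(\mathbb{D})$ is $f \mapsto f(0)$, which is visibly absolutely continuous. I would then argue that $\rho$ itself is not absolutely continuous as follows. If it were, then $\rho$ would be weak-$*$ continuous on $H^\infty(\mathbb{D})$, since the weak-$*$ topology there is the one inherited from $L^\infty(\mathbb{T}, \sigma)$. Theorem \ref{T:classicalGW} would then guarantee that $\rho$ has a unique Hahn--Banach extension to $L^\infty(\mathbb{T}, \sigma)$ and that this extension is weak-$*$ continuous. Since $\psi$ is itself a Hahn--Banach extension of $\rho$ (both are states and hence of norm one), it must coincide with that unique extension, and thus $\psi$ itself is weak-$*$ continuous. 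But $\psi|_{\rC(\mathbb{T})} = \phi$ is integration against $\sigma$, and the weak-$*$ density of $\rC(\mathbb{T})$ in $L^\infty(\mathbb{T}, \sigma)$ would then force $\psi(g) = \int_{\mathbb{T}} g \, d\sigma$ for every $g \in L^\infty$. Taking $g = \chi_K$ yields $0 = \psi(\chi_K) = \sigma(K) > 0$, a contradiction.

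The bulk of the construction is packaged into Lemma \ref{L:sarason}, so the only delicate step is the application of the uniqueness half of the classical Gleason--Whitney theorem: one has to recognize that $\psi$ is not merely some weak-$*$ continuous extension of $\rho$ but a norm-preserving one, so that it is pinned down by uniqueness. Once this is noted, the contradiction via $\chi_K$ is immediate.
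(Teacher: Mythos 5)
Your proof is correct, and it shares the paper's setup --- the state $\phi'$ of integration against $\sigma$ on $\rC(\bT)$, the fat Cantor set $K$, and the singular-at-$K$ extension $\psi$ from Lemma \ref{L:sarason}, restricted to give $\rho$ on $\Hinf$ --- but it closes the argument by a genuinely different route. The paper never invokes Theorem \ref{T:classicalGW}; instead it observes that the Gleason--Whitney property for states, combined with the weak-$*$ density of $\AD$ in $\Hinf$, would force evaluation at the origin to have a \emph{unique} Hahn--Banach extension to $\Hinf$, and then it exhibits two distinct such extensions ($\theta = $ evaluation at $0$ and your $\rho$) by testing against an explicit outer function $Q$ with $|Q| = \exp(\chi_K)$ on $\bT$: one gets $|\rho(Q)| = |\psi((1-\chi_K)Q)| \le 1$ while $|\theta(Q)| = |Q(0)| = e^{\sigma(K)} > 1$. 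You instead argue directly that $\rho$ cannot be absolutely continuous, using the classical Gleason--Whitney theorem for $\Hinf \subset L^\infty(\bT,\sigma)$ to promote the hypothetical weak-$*$ continuity of $\rho$ to weak-$*$ continuity of its norm-preserving extension $\psi$, and then contradicting $\psi(\chi_K)=0$ against $\sigma(K)>0$ via weak-$*$ density of $\rC(\bT)$. Your version is softer and avoids constructing any outer function, at the cost of using the full classical theorem (available as Theorem \ref{T:classicalGW}) as a black box; the paper's version is more self-contained and produces a concrete element of $\Hinf$ separating the two extensions, which also makes the failure of uniqueness of Hahn--Banach extensions explicit. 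Your observation that $\psi$ is pinned down because it is a \emph{norm-preserving} extension of $\rho$ is exactly the delicate point, and you handle it correctly.
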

\begin{proof}
Let $\theta$ be the state on $\Hinf$ of evaluation at the origin, and let $\phi=\theta|_{\AD}$. It follows easily from Cauchy's formula that
\[
\theta(f)=\int_{\bT}fd\sigma, \quad f\in \Hinf.
\]
In particular, both $\theta$ and $\phi$ are absolutely continuous states. If the inclusion $\AD\subset \Hinf$ had the Gleason--Whitney property for states relative to $L^\infty(\bT,\sigma)$, then any Hahn--Banach extension of $\phi$ to $\Hinf$ would be absolutely continuous, and since $\AD$ is weak-$*$ dense in $\Hinf$, such an extension would be unique. We show that this is not the case.

Let $K \subset \bT$ be a closed, nowhere dense set with $\sigma(K)>0$. Let $\phi'$ be the state on $\rC(\mathbb{T})$ given by integration with respect to $\sigma$. Apply Lemma \ref{L:sarason} to find a state $\psi$ on $L^\infty(\bT,\sigma)$ extending $\phi'$ such that
\[
\psi((1-\chi_K) f)=\psi(f), \quad f\in L^\infty(\bT,\sigma).
\]
Plainly, $\rho=\psi|_{\Hinf}$ is a Hahn--Banach extension of $\phi$. Next, let $Q\in \Hinf$ be an outer function such 
  that $|Q| = \exp(\chi_K)$ almost everywhere on $\bT$. Explicitly,
  \begin{equation*}
    Q(z) = \exp \Big( \int_{0}^{2 \pi} \frac{e^{i t} + z}{e^{it} - z} \chi_K(e^{i t}) \, \frac{dt}{2 \pi} \Big);
  \end{equation*}
  see \cite[page 62]{hoffman1988}.
  Then
  \begin{equation*}
    \| (1 - \chi_K) Q\| = \| (1 - \chi_K) \exp(\chi_K)\|= 1,
  \end{equation*}
  so
  \begin{equation*}
    |\rho(Q) | = |\psi ( (1 - \chi_K) Q) | \le 1.
  \end{equation*}
  On the other hand,  the explicit formula for $Q$ yields $|\theta (Q)| = |Q(0)|  > 1$
 since $\sigma(K)>0$. Thus, $\theta\neq \rho$, as desired.
\end{proof}

\begin{remark}
  \label{rem:MOT-GW}
  The conclusion of Proposition \ref{P:GWADHinf} also follows from an example of Miller, Olin and Thomson. In \cite[Example 40]{MOT86}, these authors construct a unital homomorphism $\pi: H^\infty(\mathbb{D}) \to L^\infty(\mathbb{T},\sigma)$ such that $\pi(f) = f \big|_{\mathbb{T}}$ for all $f \in \rA(\mathbb{D})$, but such that $\pi$ is not weak-$*$ continuous. Since $L^\infty(\mathbb{T},\sigma)$ is a commutative $\rC^*$-algebra, the homomorphism $\pi$ is necessarily contractive. In particular, there exists a weak-$*$ continuous state $\psi$ on $L^\infty(\mathbb{T},\sigma)$ such that
  the state $\varphi: H^\infty(\mathbb{D}) \to \mathbb{C}$ defined by $\varphi = \psi \circ \pi$
  is not weak-$*$ continuous. But $\varphi \big|_{\rA(\mathbb{D})}$ is absolutely continuous, so the inclusion
  $\rA(\mathbb{D}) \subset H^\infty(\mathbb{D})$ does not have the Gleason--Whitney property for states relative to $L^\infty(\mathbb{T},\sigma)$.

  Alternatively, the construction on page 52 of \cite{MOT86} directly yields a state on $H^\infty(\mathbb{D})$ that is not weak-$*$ continuous, but its restriction to the disc algebra is absolutely continuous.
  \end{remark}

Proposition \ref{P:GWADHinf} can be applied to some non-uniform algebras from Subsection \ref{SS:multipliers} as well.

\begin{proposition}\label{P:AFMF}
Let $H^2_d$ be the Drury--Arveson space on  $\bB_d$. Then, the inclusion $\A(H^2_d)\subset \M(H^2_d)$ does not have the Gleason--Whitney property for states relative to $B(H^2_d)$.
\end{proposition}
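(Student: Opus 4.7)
The plan is to reduce to the one-variable inclusion $\AD \subset \Hinf$, whose failure of the Gleason--Whitney property is recorded in Proposition \ref{P:GWADHinf}. Consider the unital contractive algebra homomorphisms
\[
J: \Hinf \to \M(H^2_d), \quad f \mapsto f(z_1), \qquad R: \M(H^2_d) \to \Hinf, \quad F \mapsto F(\cdot, 0, \ldots, 0),
\]
where $J$ is contractive by von Neumann's inequality applied to the contraction $M_{z_1}$ on $H^2_d$ together with a bounded weak-$*$ approximation by polynomials, and $R$ is contractive because $\M(H^2_d) \subset H^\infty(\bB_d)$ via the standard estimate $|F(z)| \le \|M_F\|$. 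By construction $R \circ J = \id_{\Hinf}$ and $R(\A(H^2_d)) \subset \AD$.

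Let $\rho$ be a state on $\Hinf$, furnished by Proposition \ref{P:GWADHinf}, such that $\rho|_{\AD}$ is the absolutely continuous state $f \mapsto f(0)$ but $\rho$ itself is not weak-$*$ continuous. Set
\[
\psi = \rho \circ R: \M(H^2_d) \to \bC.
\]
Then $\psi$ is a state on $\M(H^2_d)$, and for every $f \in \A(H^2_d)$ one has $\psi(f) = \rho(R(f)) = R(f)(0) = f(0)$, which is the vector state associated with the constant function $1 \in H^2_d$ and is therefore absolutely continuous relative to $B(H^2_d)$.

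It remains to show that $\psi$ is not absolutely continuous. Since the predual $L^1(\bT,\sigma)/H^1_0$ of $\Hinf$ is separable, metrizability of the weak-$*$ topology on bounded sets combined with the failure of weak-$*$ continuity of $\rho$ supplies a sequence $(f_n)$ in the closed unit ball of $\Hinf$ converging to $0$ in the weak-$*$ topology --- equivalently, boundedly and pointwise on $\bD$ --- with $\rho(f_n) \not\to 0$. Hence every Taylor coefficient $a_k^{(n)}$ of $f_n$ tends to $0$. A direct calculation exploiting the diagonal form of the Drury--Arveson reproducing kernel in the monomial basis yields
\[
\langle M_{f_n(z_1)} z^\alpha, z^\beta\rangle = a_{\beta_1 - \alpha_1}^{(n)}\, \|z^\beta\|^2
\]
whenever $\beta_1 \ge \alpha_1$ and $\alpha_j = \beta_j$ for all $j \ge 2$, and $0$ otherwise. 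Together with the uniform bound $\|M_{J(f_n)}\| \le 1$ and polynomial density in $H^2_d$, this forces $M_{J(f_n)} \to 0$ in the weak-$*$ topology of $B(H^2_d)$. Were $\psi$ absolutely continuous, it would extend to a weak-$*$ continuous functional on $B(H^2_d)$, forcing $\psi(J(f_n)) \to 0$; but $\psi(J(f_n)) = \rho(R(J(f_n))) = \rho(f_n) \not\to 0$, a contradiction.

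The main technical subtlety is ensuring that the pulled-back sequence $(J(f_n))$ converges to $0$ in the weak-$*$ topology of the ambient $B(H^2_d)$ rather than only in a weaker sense internal to $\M(H^2_d)$; this is precisely where the explicit orthogonality of distinct monomials in the Drury--Arveson space is indispensable.
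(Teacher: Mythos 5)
Your proof is correct and follows essentially the same route as the paper: both arguments use the pair of contractive homomorphisms $J=\rho$ and $R=\lambda$ with $R\circ J=\id$ to pull back the pathological state from Proposition \ref{P:GWADHinf}. The only difference is at the last step, where the paper simply cites the weak-$*$ continuity of the Sz.-Nagy--Foias $H^\infty$-calculus for the pure contraction $M_{z_1}$ to conclude that $\psi=\widehat\psi\circ\rho$ would be weak-$*$ continuous, whereas you re-derive the needed weak-$*$ convergence of $(J(f_n))$ in $B(H^2_d)$ by hand from the matrix entries against monomials --- a correct but more laborious verification of the same fact.
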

\begin{proof}
The operator $M_{z_1}$ is a pure contraction on $H^2_d$, in the sense that the sequence $(M_{z_1}^{*n}h)$ converges to $0$ for every $h\in H^2_d$. It follows from \cite[Theorem II.2.1]{nagy2010} that there is a unital, contractive and weak-$*$ continuous homomorphism $\rho:\Hinf\to \M(H^2_d)$ such that
\[
(\rho(f))(z_1,\ldots,z_d)=f(z_1).
\] 
Further,  recall that $H^2_d$ consists of holomorphic functions on $\bB_d$. It is well known that there is a unital, contractive and weak-$*$ continuous homomorphism $\lambda:\M(H^2_d)\to \Hinf$  defined as
\[
\lambda(M_f)(z)=f(z,0,\ldots,0), \quad z\in \bD
\]
for every $f\in \M(H^2_d)$. It is then easily verified that $\lambda(\A(H^2_d))\subset\AD$ and that $\lambda\circ \rho=\id$.

Next,  let $\phi$ be the absolutely continuous state on $\AD$ from Proposition \ref{P:GWADHinf}, which admits a state extension $\psi$ to $\Hinf$ that is not weak-$*$ continuous. Consider the state $\widehat\phi=\phi\circ \lambda|_{\A(H^2_d)} $ on $\A(H^2_d)$, and its extension $\widehat\psi=\psi\circ\lambda$ to $\M(H^2_d)$. Because $\lambda$ is weak-$*$ continuous, the state $\widehat\phi$ extends weak-$*$ continuously to $\M(H^2_d)$, and thus is absolutely continuous relative to $B(H^2_d)$.  If the inclusion $\A(H^2_d)\subset \M(H^2_d)$ had the Gleason--Whitney property for states relative to $B(H^2_d)$, then $\widehat\psi$ would be weak-$*$ continuous on $\M(H^2_d)$, and so would be $\psi=\widehat\psi\circ \rho$,  a contradiction.
 \end{proof}

 \begin{remark}
   As mentioned previously, it is a theorem of And\^o \cite{ando1978} that $H^\infty(\mathbb{D})=\M(H^2_1)$
   admits a Lebesgue decomposition.
   We do not know if $\mathcal{M}(H^2_d)$ admits a Lebesgue  decomposition for $d \ge 2$.
   But if it does, then the Lebesgue decomposition of $\mathcal{M}(H^2_d)$
   is neither compatible with that of $\mathcal{A}(H^2_d)$, nor with that
   of $B(H^2_d)$, by Proposition \ref{P:GWcompatspaces}, Proposition \ref{P:AFMF} and Proposition \ref{P:MFTF}.
 \end{remark}

\subsection{Riemann integrable functions}\label{SS:Riemann}
In this subsection, we turn to algebras of Riemann integrable functions. Given a  bounded, Borel measurable, real-valued function on the unit circle $\bT$, we define its  \emph{lower semi-continuous envelope} $f_*:\bT\to\bR$  as the largest lower semi-continuous function less than or equal to $f$. The \emph{upper semi-continuous envelope} $f^*:\bT\to\bR$ is defined to be the smallest upper semi-continuous function greater than or equal to $f$.

\begin{lemma}\label{L:semicont}
Let $\phi$ be an absolutely continuous state on $L^\infty(\bT,\sigma)$. Let $f:\bT\to \bR$ be a bounded Borel measurable function. Then,
  \[
      \sup \{ \varphi(g) : g \in \rC(\bT), g \le f \}
    = \phi(f_*)
  \]
  and
  \[
      \inf \{ \varphi(h) : h \in \rC(\bT), h \ge f  \}
    = \phi(f^*).
  \]
\end{lemma}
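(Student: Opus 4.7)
The plan is to reduce the problem to a pointwise representation of $f_*$ as a supremum of continuous minorants, and then pass to $\phi$ via bounded convergence. Because $\phi$ is an absolutely continuous state on the von Neumann algebra $L^\infty(\bT,\sigma)$, it is weak-$*$ continuous, hence represented by a non-negative density $w \in L^1(\bT,\sigma)$ with $\int_\bT w \, d\sigma = 1$. Since $f_*$ is bounded and Borel measurable, the value $\phi(f_*)$ is well-defined.

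I would first establish the ``$\leq$'' direction of the supremum identity: given $g \in \rC(\bT)$ with $g \leq f$, the function $g$ is in particular lower semi-continuous, so by maximality of $f_*$ among LSC minorants of $f$ one has $g \leq f_*$, whence $\phi(g) \leq \phi(f_*)$.

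For the reverse inequality, I would invoke the standard approximation of bounded LSC functions on a compact metric space by increasing sequences of continuous functions. Concretely, fixing a metric $d$ on $\bT$ and $M > 0$ with $|f| \leq M$, the Lipschitz regularizations
\[
g_n(x) = \inf_{y \in \bT} \bigl( f_*(y) + n\, d(x,y) \bigr), \quad n \geq 1,
\]
are $n$-Lipschitz continuous, satisfy $-M \leq g_n \leq f_* \leq f$, and increase pointwise to $f_*$ (the last property being where lower semi-continuity of $f_*$ enters). Each $g_n$ therefore competes in the supremum, and bounded convergence applied to the measure $w\, d\sigma$ yields $\phi(g_n) \to \phi(f_*)$, giving the missing lower bound.

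The second identity will follow from the first by applying it to $-f$, together with the elementary relation $(-f)_* = -f^*$. The only point requiring care is the uniform boundedness of the continuous approximants, which is handled by the explicit Lipschitz regularization formula; I do not anticipate any genuine obstacle.
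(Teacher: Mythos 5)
Your proof is correct and follows essentially the same route as the paper: the easy inequality from maximality of the envelope, followed by writing $f_*$ as an increasing pointwise limit of continuous minorants and passing to the limit using the integral representation of the weak-$*$ continuous state $\phi$. The only cosmetic difference is that you construct the approximating sequence explicitly via Lipschitz regularization where the paper cites a textbook reference, and you use bounded rather than monotone convergence; both are fine.
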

\begin{proof}
By definition of the envelopes $f_*$ and $f^*$ we readily find
  \[
      \sup \{ \varphi(g) : g \in \rC(\bT), g \le f \}
    \leq  \phi(f_*)
  \]
  and
  \[
      \inf \{ \varphi(h) : h \in \rC(\bT), h \ge f  \}
    \geq \phi(f^*).
  \]
 Next, note that $f_*$ is the  pointwise supremum of an increasing
  sequence of continuous functions, and $f^*$ is the pointwise infimum of a decreasing sequence of continuous
  functions (\cite[7K.4]{willard1970}). The desired equalities thus follow from the monotone convergence theorem, as $\phi$ is given by integration against some regular Borel probability measure on $\bT$.
   \end{proof}

Let $\cR(\bT) \subset L^\infty(\bT,\sigma)$ denote the space of all functions that agree with
a Riemann integrable function almost everywhere on $\bT$. This space
is a norm closed subalgebra of $L^\infty(\bT,\sigma)$ by a result of Orlicz \cite{Orlicz29} (see also the introduction
of \cite{domanski1993}).
Our next result shows that $\cR(\bT)$ is the unique maximal subspace $\X\subset L^\infty(\bT,\sigma)$ such that the inclusion $\rC(\bT)\subset \X$ has the Gleason--Whitney property for states relative to $L^\infty(\bT,\sigma)$. 
This result is inspired by a remark preceding Theorem 2 in \cite{sarason1973}.

\begin{theorem}
  \label{T:riemann_gleason}
  Let $\X$ be a subspace of $L^\infty(\bT,\sigma)$ that contains $\rC(\bT)$.
  The following assertions are equivalent.
  \begin{enumerate}[{\rm (i)}]
    \item The inclusion $\rC(\bT)\subset \X$ has the Gleason--Whitney property for states relative to $L^\infty(\bT,\sigma)$.
     \item Every absolutely continuous state on $C(\bT)$ has a unique Hahn--Banach extension
      to $\X$.
    \item We have $\X \subset \cR(\bT)$.
  \end{enumerate}
\end{theorem}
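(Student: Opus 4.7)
The plan is to prove the cycle (i) $\Leftrightarrow$ (ii), (iii) $\Rightarrow$ (i), and finally the contrapositive of (ii) $\Rightarrow$ (iii), which carries the main technical content.

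The equivalence (i) $\Leftrightarrow$ (ii) is essentially formal. Any Hahn--Banach extension $\psi$ of an absolutely continuous state $\phi$ on $\rC(\bT)$ to $\X$ is a state on $\X$, being unital and of norm one; moreover, the unique weak-$*$ continuous extension $\Phi$ of $\phi$ to $L^\infty(\bT,\sigma)$ restricts to a Hahn--Banach extension $\Phi|_\X$. Under (i), every such $\psi$ is absolutely continuous and so, by weak-$*$ density of $\rC(\bT)$ in $L^\infty(\bT,\sigma)$, must agree with $\Phi|_\X$. Conversely, under (ii), any state on $\X$ with absolutely continuous restriction is a Hahn--Banach extension and hence equals $\Phi|_\X$.

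For (iii) $\Rightarrow$ (i), suppose $\X \subset \cR(\bT)$ and let $\psi$ be a state on $\X$ with $\psi|_{\rC(\bT)} = \phi$ absolutely continuous. I would Hahn--Banach-extend $\psi$ to a unital contractive---hence, by commutativity of $L^\infty(\bT,\sigma)$, positive---functional $\Psi$ on $L^\infty(\bT,\sigma)$. For $f \in \X$, a Riemann integrable Borel representative $f_0$ satisfies $f_{0,*} = f_0 = f_0^*$ almost everywhere. Combining positivity of $\Psi$ with Lemma~\ref{L:semicont} then sandwiches $\Psi(f)$ between $\Phi(f_{0,*})$ and $\Phi(f_0^*)$, both equal to $\Phi(f)$; splitting into real and imaginary parts handles complex $f$. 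Hence $\psi = \Phi|_\X$ is absolutely continuous.

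The principal step, (ii) $\Rightarrow$ (iii), I would prove contrapositively. Assume $f \in \X \setminus \cR(\bT)$; replacing $f$ by $if$ if necessary, arrange that $u := \operatorname{Re} f \notin \cR(\bT)$, noting that $u$ itself need not lie in $\X$. Fix a Borel representative $u_0$ of $u$ whose pointwise envelopes satisfy $u_{0,*} < u_0^*$ on a set of positive $\sigma$-measure, let $\phi$ be the absolutely continuous state given by normalized arc length, and let $\Phi$ be its weak-$*$ continuous extension to $L^\infty(\bT,\sigma)$. Lemma~\ref{L:semicont} yields $\Phi(u_{0,*}) < \Phi(u_0^*)$, so I pick $c$ in this open interval with $c \neq \Phi(u)$. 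On the operator system $\rC(\bT) + \mathbb{C} u_0 \subset L^\infty(\bT,\sigma)$ (self-adjoint because $u_0$ is real), define $\Psi_0(g + \alpha u_0) = \phi(g) + \alpha c$; using that any continuous minorant (respectively, majorant) of $u_0$ lies pointwise below $u_{0,*}$ (respectively, above $u_0^*$), verify that $\Psi_0$ is positive on its self-adjoint part, hence a state. The Hahn--Banach theorem produces a norm-preserving extension to a state $\Psi$ on $L^\infty(\bT,\sigma)$ (the norm-one unital extension being automatically positive). Then $\operatorname{Re} \Psi(f) = \Psi(u) = c \neq \Phi(u) = \operatorname{Re} \Phi(f)$, so $\Psi|_\X$ and $\Phi|_\X$ are distinct Hahn--Banach extensions of $\phi$ to $\X$, contradicting (ii).

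The delicate point is the positivity verification for $\Psi_0$, because positivity in $L^\infty(\bT,\sigma)$ is only almost-everywhere pointwise. I expect to handle this either by choosing a sufficiently regular Borel representative $u_0$ so that an a.e.\ inequality $g \le u_0$ with $g$ continuous transfers to the pointwise inequality $g \le u_{0,*}$, or by systematically replacing pointwise envelopes with their essential counterparts $u^\#(x) := \lim_{r \to 0^+} \operatorname{ess\,sup}_{B(x,r)} u$ and proving a corresponding version of Lemma~\ref{L:semicont}, which boils down to the observation that a continuous function that majorizes $u$ almost everywhere automatically majorizes $u^\#$ pointwise.
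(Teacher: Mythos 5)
Your proposal is correct and rests on the same two pillars as the paper's proof --- Lemma \ref{L:semicont} together with Carath\'eodory's characterization of Riemann integrability via coincidence of the upper and lower envelopes --- but it routes the implications differently. The paper proves (ii) $\Leftrightarrow$ (iii) by passing to the unital self-adjoint subspace $\S$ generated by $\X$ and invoking the abstract criterion for uniqueness of state extensions ($\sup\{\phi(g):g\le f\}=\inf\{\phi(h):h\ge f\}$, citing \cite[Proposition 6.2]{arveson2011}), then gets (i) from (ii) via weak-$*$ density. You instead prove (iii) $\Rightarrow$ (i) directly by sandwiching an arbitrary state extension $\Psi$ between $\Phi(f_{0,*})$ and $\Phi(f_0^*)$, and you prove $\neg$(iii) $\Rightarrow\neg$(ii) by opening up the black box: an explicit positive functional on the operator system $\rC(\bT)+\bC u_0$ taking a prescribed value $c$ strictly between the integrals of the two envelopes. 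This is essentially the proof of Arveson's criterion inlined, and it neatly sidesteps the issue that $\re f$ need not lie in $\X$ (which the paper handles by working in $\S$). Two remarks. First, your observation that one must distinguish pointwise from almost-everywhere inequalities is a genuine point that the paper glosses over: the order on $L^\infty(\bT,\sigma)$ only sees a.e.\ inequalities, so Lemma \ref{L:semicont} and the Carath\'eodory criterion should really be formulated with essential envelopes (or with a representative-independent statement); your proposed fix via $u^\#(x)=\lim_{r\to 0^+}\operatorname{ess\,sup}_{B(x,r)}u$ is the standard and correct one, since a continuous a.e.\ majorant of $u$ pointwise majorizes $u^\#$, and $u^\#$ is upper semicontinuous and hence a decreasing pointwise limit of continuous functions. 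Second, in your $\neg$(iii) $\Rightarrow\neg$(ii) step you should note explicitly that $\Phi(u_{0,*})<\Phi(u_0^*)$ because $\phi$ is normalized arc length and the envelopes differ on a set of positive $\sigma$-measure, and that the interval is therefore nondegenerate so a value $c\neq\Phi(u)$ can indeed be chosen; with that, the argument is complete.
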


\begin{proof}
  We begin by making a few remarks.
Observe first that the unit ball of $\rC(\bT)$ is weak-$*$ dense in that of $L^\infty(\bT,\sigma)$. In particular, this implies that, given a functional $\phi$ on $\rC(\bT)$,
\begin{enumerate}[{\rm (a)}]
\item  $\phi$ admits at most one weak-$*$ continuous extension to $L^\infty(\bT,\sigma)$, and
\item  if $\psi$ is a weak-$*$ continuous extension to $L^\infty(\bT,\sigma)$ of $\phi$, then $\|\psi\|=\|\phi\|$.
\end{enumerate}
Second, let $\S\subset L^\infty(\bT,\sigma)$ be a unital self-adjoint subspace containing $\rC(\bT)$. Then, as is well known (\cite[Proposition 6.2]{arveson2011}), a state $\phi$ on $\rC(\bT)$ admits a unique Hahn--Banach extension to $\S$ if and only if 
  \begin{equation}
    \label{eqn:unique-H-B}
    \sup \{ \varphi(g) : g \in \rC(\bT),  g \le f  \}
    = \inf \{ \varphi(h): h \in \rC(\bT), h \ge f \}
  \end{equation}
  holds for every self-adjoint element $f\in \S$.
  We now proceed to prove the equivalences.

  (i) $\Rightarrow$ (ii):  This follows immediately from property (a) above.
    
  (ii) $\Rightarrow$ (i): Let $\phi$ be an absolutely continuous state on $\rC(\bT)$. Then, there is a weak-$*$ continuous functional $\omega$ on $L^\infty(\bT,\sigma)$ extending $\phi$. By property (b),  we infer that $\omega$ is a state. By assumption, the absolutely continuous state $\omega|_\X$ is the unique Hahn--Banach extension of $\phi$ to $\X$, whence (i) follows.

  (ii) $\Rightarrow$ (iii):   Let $\S\subset L^\infty(\bT,\sigma)$ denote the unital self-adjoint subspace generated by $\X$. Let $f\in \S$ be a self-adjoint element and   let $\phi$ be an absolutely continuous state on $\rC(\bT)$.
By assumption, $\phi$ admits a unique Hahn--Banach extension to  $\X$, and hence to $\S$, so
 Equation \eqref{eqn:unique-H-B} coupled with Lemma \ref{L:semicont} yields $\phi(f_*)=\phi(f^*)$.
 Since $\phi$ was chosen to be an arbitrary absolutely continuous state, this forces  $f_* = f^*$ almost everywhere on $\bT$. It is a theorem of Carath\'eodory \cite[Satz 7]{Caratheodory18} (see also  \cite[Proposition 15.5.3]{semadeni1971}) that this equality is equivalent to $f \in \R(\mathbb{T})$. We have thus shown that all self-adjoint elements of $\S$ lie in $\R(\bT)$, whence $\X\subset \S\subset \R(\bT)$ as desired.
  
   (iii) $\Rightarrow$ (ii): Let $\phi$ be an absolutely continuous state on $\rC(\bT)$. 
   It suffices to show that $\phi$ admits a unique state extension to $\S$, as defined above. In view of \eqref{eqn:unique-H-B} and Lemma \ref{L:semicont} we fix a self-adjoint  element $f\in \S$, and we have to show that $\phi(f_*)=\phi(f^*)$.
This follows from the assumption, since $f^* = f_*$ almost everywhere by the above mentioned theorem of Carath\'eodory.
\end{proof}

We now return to the disc algebra. Taking $\mathcal{X} = H^\infty(\mathbb{D})$ below and using the fact that there are functions in $H^\infty(\mathbb{D})$ whose boundary values are not Riemann integrable, we obtain yet another explanation for Proposition \ref{P:GWADHinf}.

\begin{corollary}
  \label{C:RiemannAD}
  Let $\X \subset H^\infty(\bD)$ be a subspace
  that contains $\rA(\bD)$. Then, the following assertions are equivalent.
  \begin{enumerate}[{\rm (i)}]
    \item The inclusion $\rA(\bD)\subset \X$ has the Gleason--Whitney property for states relative to $L^\infty(\bT,\sigma)$.
    \item Every absolutely continuous state on $\rA(\bD)$ has a unique Hahn--Banach extension
      to $\X$.
    \item We have $\X \subset \cR(\bT)$.
  \end{enumerate}
\end{corollary}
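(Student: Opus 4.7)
The plan is to mirror the proof of Theorem \ref{T:riemann_gleason}, with $\rA(\bD)$ in the role of $\rC(\bT)$, establishing (i) $\Leftrightarrow$ (ii) via classical $H^p$-theory and (ii) $\Leftrightarrow$ (iii) via Arveson's sup-inf formula for state extensions. The implication (ii) $\Rightarrow$ (i) is immediate: the weak-$*$ continuous extension $\tilde\phi$ of an absolutely continuous state $\phi$ on $\rA(\bD)$ restricts to an absolutely continuous Hahn--Banach extension of $\phi$ on $\X$, so uniqueness forces every Hahn--Banach extension to be absolutely continuous. For (i) $\Rightarrow$ (ii), given Hahn--Banach state extensions $\psi_1, \psi_2$ of $\phi$ to $\X$, hypothesis (i) yields weak-$*$ continuous extensions $\tilde\psi_i$ of $\psi_i$ to $L^\infty(\bT,\sigma)$, corresponding to $L^1$-densities $h_i$. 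The difference $h_1 - h_2$ annihilates $\rA(\bD)$ and hence lies in $H^1_0$ by a Fourier-coefficient computation; for $f \in \X \subset H^\infty(\bD)$ the product $f(h_1 - h_2)$ then lies in $H^1_0$ and has vanishing mean, so $\psi_1 = \psi_2$ on $\X$.

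The core content is (ii) $\Leftrightarrow$ (iii). A standard Hahn--Banach argument shows that uniqueness of Hahn--Banach state extensions of $\phi$ from $\rA(\bD)$ to $\X$ is equivalent to all state extensions of $\phi$ to $L^\infty(\bT,\sigma)$ agreeing on $\X$. Arveson's sup-inf formula then gives, for any self-adjoint $u \in L^\infty(\bT,\sigma)$,
\[
\min_{\tilde\psi}\tilde\psi(u) = \sup\{\re\phi(g) : g \in \rA(\bD),\ \re g \le u\},
\]
with an analogous formula for the maximum, where $\re g = (g + g^*)/2$ is interpreted in $L^\infty(\bT,\sigma)$. Writing $\tilde\phi$ for the weak-$*$ continuous extension of $\phi$ with density $h \ge 0$, the reality of $h$ yields $\re\phi(g) = \int \re g \cdot h\, d\sigma$. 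Every real trigonometric polynomial has the form $\re g$ for some $g \in \rA(\bD)$ (take $g(z) = a_0 + 2\sum_{n\ge 1} a_n z^n$ for a real polynomial $\sum_{|n|\le N} a_n z^n$ with $a_{-n} = \overline{a_n}$), and these are uniformly dense in $\rC(\bT,\bR)$; a shift-by-constant approximation accordingly converts the sup above into $\sup\{\tilde\phi(v) : v \in \rC(\bT,\bR),\ v \le u\}$, which by Lemma \ref{L:semicont} equals $\tilde\phi(u_*)$. Similarly $\max_{\tilde\psi}\tilde\psi(u) = \tilde\phi(u^*)$.

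To conclude, write $f = u + iv$ for $f \in \X$ with $u, v \in L^\infty(\bT,\sigma)$ real-valued. For (ii) $\Rightarrow$ (iii), take $\phi$ to be evaluation at the origin (so $h = 1$): uniqueness on $\X$ forces the min and max to agree on $u$ and $v$, so $\int(u^* - u_*)\, d\sigma = \int(v^* - v_*)\, d\sigma = 0$; since $u^* \ge u_*$ and $v^* \ge v_*$ pointwise, we conclude $u^* = u_*$ and $v^* = v_*$ almost everywhere, whence Carath\'eodory's theorem (\cite[Satz 7]{Caratheodory18}) gives $u, v \in \cR(\bT)$ and hence $f \in \cR(\bT)$. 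Conversely, if $\X \subset \cR(\bT)$, Carath\'eodory yields $u^* = u_*$ and $v^* = v_*$ almost everywhere for every $f \in \X$, forcing the min and max to coincide for every absolutely continuous $\phi$ and yielding uniqueness. The most delicate step will be establishing Arveson's sup-inf formula in the non-self-adjoint setting and justifying the uniform approximation that replaces $\re g$ by a general element of $\rC(\bT,\bR)$.
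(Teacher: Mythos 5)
Your proof is correct, but it takes a genuinely different route from the paper's. The paper deduces the corollary from Theorem \ref{T:riemann_gleason}: it passes to the closed unital self-adjoint subspace $\S\subset L^\infty(\bT,\sigma)$ generated by $\X$ (which contains $\rC(\bT)$ since $\X\supset\AD$), shows for (ii) $\Rightarrow$ (iii) that uniqueness of extensions on $\X$ propagates to uniqueness on $\S$, and for (iii) $\Rightarrow$ (i) restricts a Hahn--Banach extension on $\S$ down to $\rC(\bT)$ and invokes the F.\ and M.\ Riesz theorem (Proposition \ref{P:ballalgebra} with $d=1$) before quoting Theorem \ref{T:riemann_gleason} again. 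You instead rerun the argument of Theorem \ref{T:riemann_gleason} directly over $\AD$, replacing the self-adjoint sup--inf criterion by its non-self-adjoint analogue and using the uniform density of $\{\re g: g\in\AD\}$ in the real continuous functions to reduce to Lemma \ref{L:semicont} and Carath\'eodory. Your version is self-contained at the level of $\AD$ and makes the mechanism (range of state extensions $=[\tilde\phi(u_*),\tilde\phi(u^*)]$) explicit, at the cost of redoing the envelope computation; the paper's version is shorter because it recycles the already-proved $\rC(\bT)$ case. Two remarks. First, the step you flag as most delicate -- the sup--inf formula for state extensions from a non-self-adjoint unital subalgebra -- is exactly the form of \cite[Proposition 6.2]{arveson2011} that the paper already uses in the proof of Lemma \ref{L:approxGW}, and your shift-by-constant density argument is sound, so there is no real gap there. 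Second, in your (ii) $\Rightarrow$ (i) step and wherever you take the density $h$ of $\tilde\phi$ to be nonnegative, you are tacitly using that an absolutely continuous state on $\AD$ admits a weak-$*$ continuous \emph{state} extension to $L^\infty(\bT,\sigma)$ (a priori it only has a weak-$*$ continuous extension of possibly larger norm); this follows from the F.\ and M.\ Riesz theorem, so the classical ingredient that the paper invokes via Proposition \ref{P:ballalgebra} is present, slightly hidden, in your argument as well.
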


\begin{proof}
  (i) $\Rightarrow$ (ii):  This follows at once from the fact that $\rA(\bD)$ is weak-$*$ dense in $H^\infty(\bD)$.

  (ii) $\Rightarrow$ (iii): Let $\S\subset L^\infty(\bT,\sigma)$ denote the closed unital self-adjoint subspace generated by $\X$. Because $\X$ contains $\AD$, it follows that $\S$ contains $\rC(\bT)$. Let $\phi$ be an absolutely continuous state on $\rC(\bT)$. Let $\psi_1$ and $\psi_2$ be two Hahn--Banach extensions of $\phi$ to $\S$. Then, $\psi_1|_{\X}$ and $\psi_2|_{\X}$ are both Hahn--Banach extensions of the absolutely continuous state $\phi|_{\AD}$, so by assumption we find $\psi_1|_{\X}=\psi_2|_{\X}$. Now, $\psi_1$ and $\psi_2$ are both states, so by construction of $\S$, this forces $\psi_1=\psi_2$. Invoking Theorem \ref{T:riemann_gleason}, we infer that $\X\subset \S\subset \R(\bT)$.
 
  (iii) $\Rightarrow$ (i): Let $\S\subset L^\infty(\bT,\sigma)$ be defined as above. 
Let $\phi$ be an absolutely continuous state on $\AD$ and let $\psi$ be a Hahn--Banach extension to $\X$.
Let $\psi'$ be a Hahn--Banach extension of $\psi$ to $\S$ and let $\varphi' = \psi' \big|_{C(\mathbb{T})}$.
Then $\varphi'$ is a Hahn--Banach extension of $\varphi$, so $\varphi'$ is absolutely
continuous by Proposition \ref{P:ballalgebra}. Theorem \ref{T:riemann_gleason} then implies that $\psi'$
is absolutely continuous, hence so is $\psi$.
\end{proof}

We now illustrate these ideas with a concrete example.

\begin{example}\label{E:ADRiemann}
Let $\theta\in \Hinf$ be an inner function. Assume that there is a closed countable set $\Gamma\subset \bT$ such that $\theta$ extends continuously to $\ol{\bD}\setminus \Gamma$; this can be arranged by specifying the cluster set of the zeros of $\theta$, along with the support of some singular measure on $\bT$ (see \cite[pages 68-69]{hoffman1988}). Let $\X_\theta \subset \Hinf$ denote the subspace generated by $\AD$ and $\theta$. By construction, we see that $\X\subset \R(\bT)$.  An application of Corollary \ref{C:RiemannAD} reveals that the inclusion $\AD\subset \X_\theta$ has the Gleason--Whitney property for states relative to $L^\infty(\bT,\sigma)$. 
\qed
\end{example}

\subsection{Multivariate variations on the classical inclusion}\label{SS:classmulti}

From the point of view of the Gleason--Whitney property, the classical inclusion $H^\infty(\bD)\subset L^\infty(\bT,\sigma)$ is the prototypical example. In this subsection, we aim to show that the corresponding multivariate inclusions, either on the ball or the polydisc, do not have the Gleason--Whitney property relative to the ambient $L^\infty$-space.
Our arguments hinge on the following technical tool.

\begin{lemma}
  \label{lem:GW_L_infty}
  Let $X$ be a compact Hausdorff space, let $\mu$ be a positive regular Borel measure on $X$ and let $\cA \subset L^\infty(X,\mu)$
  be a unital subspace. Assume that the inclusion $\cA\subset L^\infty(X,\mu)$ has the Gleason--Whitney property for states relative to $L^\infty(X,\mu)$.
  Suppose further that $\cA + \rC(X)$ satisfies the following approximation property: for every $f \in \cA + \rC(X)$,
  there exist sequences $(a_n)$ in $\cA \cap \rC(X)$ and $(b_n)$ in $\rC(X)$ such that
  \begin{enumerate}[{\rm (i)}]
    \item $\|a_n + b_n\| \le \|f\|$ for every $n$,
    \item the sequence $(a_n)$ converges to an element $a \in \cA$ in the weak-$*$ topology of $L^\infty(X,\mu)$,
    \item the sequence $(b_n)$ converges to an element $b \in \rC(X)$ in norm, and
    \item $a + b = f$.
  \end{enumerate}
  Let $\varphi$ be an absolutely continuous state on $\A$ and let $\theta$ be a state on $\rC(X)$ agreeing with $\phi$ on $\A\cap \rC(X)$.
  Then, $\theta$ is absolutely continuous.
\end{lemma}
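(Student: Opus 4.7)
The plan is to build a state $\psi$ on the intermediate space $\cA + \rC(X)$ that restricts to $\varphi$ on $\cA$ and to $\theta$ on $\rC(X)$, extend it by Hahn--Banach to a state $\Psi$ on $L^\infty(X,\mu)$, and then invoke the Gleason--Whitney property for states to upgrade $\Psi$ (and hence its restriction $\theta$) to an absolutely continuous functional.

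First I would define $\psi : \cA + \rC(X) \to \bC$ by $\psi(a+b) = \varphi(a) + \theta(b)$. To check well-definedness, suppose $a + b = a' + b'$ with $a,a' \in \cA$ and $b,b' \in \rC(X)$; then $a - a' = b' - b$ lies in $\cA \cap \rC(X)$, and the hypothesis that $\varphi$ and $\theta$ agree on $\cA \cap \rC(X)$ gives $\varphi(a-a') = \theta(b'-b)$, which rearranges to $\varphi(a) + \theta(b) = \varphi(a') + \theta(b')$. Obviously $\psi(1) = \varphi(1) = 1$.

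The technical heart of the proof is showing $\|\psi\| \le 1$, and this is where the approximation hypothesis does its work. Given $f = a + b \in \cA + \rC(X)$, choose sequences $(a_n) \subset \cA \cap \rC(X)$ and $(b_n) \subset \rC(X)$ satisfying (i)--(iv). Because $\varphi$ is absolutely continuous, it extends to a weak-$*$ continuous functional on $L^\infty(X,\mu)$, so the weak-$*$ convergence $a_n \to a$ yields $\varphi(a_n) \to \varphi(a)$; meanwhile $\theta(b_n) \to \theta(b)$ by norm continuity of $\theta$. Hence $\psi(a_n + b_n) \to \psi(f)$. On the other hand, each $a_n$ lies in $\cA \cap \rC(X)$, so $\varphi(a_n) = \theta(a_n)$, and therefore
\[
\psi(a_n + b_n) = \theta(a_n) + \theta(b_n) = \theta(a_n + b_n),
\]
which is bounded in modulus by $\|a_n + b_n\| \le \|f\|$ since $\theta$ is a state. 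Passing to the limit gives $|\psi(f)| \le \|f\|$.

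Thus $\psi$ is a unital contractive functional on the unital subspace $\cA + \rC(X)$ of the unital $\rC^*$-algebra $L^\infty(X,\mu)$. Any Hahn--Banach extension $\Psi$ of $\psi$ to $L^\infty(X,\mu)$ remains unital and contractive, hence is automatically positive, hence a state. Since $\Psi|_\cA = \varphi$ is absolutely continuous, the Gleason--Whitney property for states forces $\Psi$ itself to be absolutely continuous, and then so is $\theta = \Psi|_{\rC(X)}$. The main obstacle is precisely the contractivity estimate for $\psi$: the hypothesis is tailor-made to make the approximants $a_n + b_n \in \rC(X)$ simultaneously witness the norm of $f$ and the value $\psi(f)$, which is what allows $\theta$'s contractivity to be imported onto $\psi$.
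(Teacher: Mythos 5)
Your proof is correct and follows essentially the same route as the paper's: define $\psi(a+b)=\varphi(a)+\theta(b)$, check well-definedness using agreement on $\cA\cap\rC(X)$, prove contractivity via the approximation property (replacing $\varphi(a_n)$ by $\theta(a_n)$ so that $\theta$'s contractivity bounds $\psi(a_n+b_n)$), and finish with a Hahn--Banach state extension and the Gleason--Whitney property for states. No gaps.
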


\begin{proof}
First note that given $a,a'\in \A$ and $b,b'\in \rC(X)$ such that $a+b=a'+b'$ in $L^\infty(X,\mu)$, then $a-a'=b'-b\in \A\cap \rC(X)$ so that
$
\phi(a-a')=\theta(b'-b)
$
or
$
\phi(a)+\theta(b)=\phi(a')+\theta(b').
$
This implies that we may define a unital linear functional $\psi:\A+\rC(X)\to \bC$ as
\[
\psi(a+b)=\varphi(a) + \theta(b) \quad a \in \cA, b \in \rC(X).
\]
 We claim that $\psi$ is a contractive. To see this, let $f \in \cA + \rC(X)$ and choose
  sequences $(a_n)$ in $\cA \cap \rC(X)$ and $(b_n)$ in $\rC(X)$ as well as elements $a \in \cA$ and $b \in \rC(X)$
  satisfying Conditions (i)--(iv). Using that $\varphi$ is absolutely continuous, we see that
    \begin{equation*}
    \psi(f) = \varphi(a) +\theta(b)
    = \lim_{n \to \infty} \left(\varphi(a_n) +\theta(b_n)\right)
    = \lim_{n \to \infty} \theta(a_n + b_n),
  \end{equation*}
  hence
  \begin{equation*}
    |\psi(f)| \le \sup_{n \in \bN} ||a_n + b_n||\leq ||f||
  \end{equation*}
and the claim follows.
  
Let $\psi'$ be a Hahn--Banach extension of $\psi$ to $L^\infty(X,\mu)$, so that $\psi'$ is a state.
Because the inclusion $\cA\subset L^\infty(X,\mu)$ has the Gleason--Whitney property for states relative to $L^\infty(X,\mu)$, this forces $\psi'$ to be absolutely continuous, and in particular so is $\theta=\psi'|_{\rC(X)}$.
  \end{proof}

Here is a simple application.

\begin{proposition}\label{HinfbD}
Let $A$ denote area measure on $\bC$. Then, the inclusion $\Hinf\subset L^\infty(\ol{\bD},A)$ does not have the Gleason--Whitney property for states relative to $L^\infty(\ol{\bD},A)$.
\end{proposition}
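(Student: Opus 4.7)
The plan is to apply Lemma \ref{lem:GW_L_infty} contrapositively with $X = \ol{\bD}$, $\mu = A$, and $\cA = \Hinf$ viewed inside $L^\infty(\ol{\bD}, A)$ via the isometric embedding given by evaluation on the interior. In this setting $\cA \cap \rC(\ol{\bD}) = \AD$, the disc algebra.

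First, I would exhibit the two functionals needed. Let $\phi \in \cA^*$ be evaluation at the origin; the area mean value property
\[
  \phi(f) = f(0) = \frac{1}{\pi}\int_{\bD} f\, dA,\quad f \in \Hinf,
\]
shows that $\phi$ is an absolutely continuous state on $\cA$ relative to $L^\infty(\ol{\bD},A)$. Let $\theta \in \rC(\ol{\bD})^*$ also be evaluation at the origin, so that $\theta$ and $\phi$ agree on $\AD = \cA \cap \rC(\ol{\bD})$. The state $\theta$ fails to be absolutely continuous relative to $L^\infty(\ol{\bD},A)$: any weak-$*$ continuous extension to $L^\infty(\ol{\bD},A)$ would arise from integration against some $g \in L^1(\ol{\bD},A)$, forcing the unique representing measure of $\theta$ on $\rC(\ol{\bD})$ to be $g\, dA$, contradicting the fact that this measure is the Dirac mass $\delta_0$, which is singular with respect to $A$.

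The key step is to verify the approximation property. Given $f = a + b$ with $a \in \Hinf$ and $b \in \rC(\ol{\bD})$, pick $r_n \uparrow 1$ and set $a_n(z) = a(r_n z)$ and $b_n(z) = b(r_n z)$. Each $a_n$ belongs to $\AD$ since $a$ is holomorphic on a neighborhood of $r_n \ol{\bD}$, and because $f = a+b$ is continuous on $\bD$, I would obtain
\[
  \|a_n + b_n\|_\infty = \sup_{z \in \ol{\bD}} |f(r_n z)| = \sup_{w \in r_n \ol{\bD}} |f(w)| \le \|f\|_\infty,
\]
verifying (i). Moreover, $b_n \to b$ uniformly on $\ol{\bD}$ by uniform continuity of $b$, verifying (iii), and $a_n \to a$ weak-$*$ in $L^\infty(\ol{\bD},A)$ by dominated convergence, using pointwise convergence on $\bD$ and the bound $\|a_n\|_\infty \le \|a\|_\infty$, verifying (ii); (iv) holds by construction.

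Assuming the inclusion $\Hinf \subset L^\infty(\ol{\bD},A)$ had the Gleason--Whitney property for states, Lemma \ref{lem:GW_L_infty} would now force $\theta$ to be absolutely continuous, contradicting what was observed above. The main obstacle is the approximation step, but the dilation trick handles it cleanly because both $\Hinf$ and $\rC(\ol{\bD})$ are stable under precomposition with $z \mapsto r z$, and the norm control comes from the continuity of $f$ on the open disc combined with $r_n \ol{\bD} \subset \bD$.
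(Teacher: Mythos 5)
Your proof is correct and follows essentially the same route as the paper: both apply Lemma \ref{lem:GW_L_infty} with the dilation $a_n(z)=a(r_nz)$, $b_n(z)=b(r_nz)$ to verify the approximation property, and both take $\phi$ to be evaluation at the origin, absolutely continuous via the area mean value property. The only difference is the choice of the witness state $\theta$ on $\rC(\ol{\bD})$: you use the Dirac mass at the origin, whereas the paper integrates over $\bT$ against arclength measure (invoking Cauchy's formula to match $\phi$ on the disc algebra); both measures are singular with respect to $A$, so either choice works.
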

\begin{proof}
Let $\phi$ be the absolutely continuous state on $\Hinf$ of evaluation at the origin. 
Let $\theta$ be the state on $\rC(\ol{\bD})$ defined as
\[
\theta(f)=\int_{\bT}fd\sigma, \quad f\in \rC(\ol{\bD}).
\]
Cauchy's formula shows that $\theta$ agrees with $\phi$ on $\Hinf\cap \rC(\ol{\bD})$. Furthermore, because $\sigma$ is not absolutely continuous (as a measure) with respect to $A$, the state $\theta$ is not absolutely continuous relative to $L^\infty(\ol{\bD},A)$. Hence, the desired statement follows from Lemma \ref{lem:GW_L_infty} once we show that  $\Hinf+ \rC(\ol{\bD})$ has the required approximation property. This is easily accomplished: given $f=a+b$ with $a\in \Hinf$ and $b\in \rC(\ol{\bD})$, define 
\[
a_n(z)=a((1-1/n)z), \quad z\in \bD
\]
and
\[
b_n(z)=b((1-1/n)z), \quad z\in \ol{\bD}.
\]
A standard argument reveals that the sequences $(a_n)$ and $(b_n)$ enjoy all required properties.
\end{proof}
We now give a slightly more complicated example, which shows that the classical Gleason--Whitney theorem does not extend to the ball. Recall that $\sigma_d$ denotes the rotation invariant probability measure on the unit sphere $\mathbb{S}_d$.
Just as in one variable. every function $f \in H^\infty(\mathbb{B}_d)$ has radial limits $f^*$ at $\sigma_d$-almost every point of $\mathbb{S}_d$;
see e.g. \cite[Theorem 5.6.4]{rudin2008}. Moreover, the map $f \mapsto f^*$ defines a linear isometry
from $H^\infty(\mathbb{B}_d)$ into $L^\infty(\mathbb{S}_d,\sigma_d)$, see the proof below for more explanation.
In this way, we regard $H^\infty(\mathbb{B}_d)$ as a subalgebra of $L^\infty(\mathbb{S}_d,\sigma_d)$.

\begin{proposition}\label{P:HballGW}
Let $d\geq 2$ be an integer. Then, the inclusion $H^\infty(\bB_d)\subset L^\infty( \bS_d,\sigma_d)$ does not have the Gleason--Whitney property for states relative to $L^\infty(\bS_d,\sigma_d)$. 
\end{proposition}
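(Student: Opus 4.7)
The plan is to invoke Lemma~\ref{lem:GW_L_infty} with $X = \bS_d$, $\mu = \sigma_d$ and $\cA = H^\infty(\bB_d)$. This requires exhibiting an absolutely continuous state $\phi$ on $H^\infty(\bB_d)$ and a state $\theta$ on $\rC(\bS_d)$ that agrees with $\phi$ on $H^\infty(\bB_d) \cap \rC(\bS_d)$ yet fails to be absolutely continuous, together with verifying the requisite approximation property for $H^\infty(\bB_d) + \rC(\bS_d)$.

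The natural candidates are evaluation at the origin, $\phi(a) = a(0)$, and the slicing state
\[
  \theta(g) = \int_\bT g(\zeta, 0, \ldots, 0) \, d\sigma_1(\zeta), \quad g \in \rC(\bS_d).
\]
The identity $a(0) = \int_{\bS_d} a^* \, d\sigma_d$ (which comes from evaluating the invariant Poisson integral representation of $a$ at the origin) shows that $\phi$ is absolutely continuous. Because $d \ge 2$, the circle $\bT \times \{0\}^{d-1}$ is a $\sigma_d$-null subset of $\bS_d$, so $\theta$ is not absolutely continuous. On the other hand, any element of $H^\infty(\bB_d) \cap \rC(\bS_d)$, being a bounded $M$-harmonic function equal to the invariant Poisson integral of its continuous boundary values, extends to a function in $\AB$; the one-variable Cauchy formula applied to the slice $\zeta \mapsto a(\zeta, 0, \ldots, 0) \in \AD$ then yields $\theta(a) = a(0) = \phi(a)$.

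The main step is to verify the approximation property. Given $f = a + b$ with $a \in H^\infty(\bB_d)$ and $b \in \rC(\bS_d)$, let $\tilde b$ denote the invariant Poisson extension of $b$ to $\ol{\bB_d}$, which is continuous on $\ol{\bB_d}$. Since $a$ is likewise the invariant Poisson integral of its boundary values $a^*$,
\[
  |a(w) + \tilde b(w)| = \left| \int_{\bS_d} P(w, \zeta) f(\zeta) \, d\sigma_d(\zeta) \right| \le \|f\|_\infty, \quad w \in \bB_d,
\]
because the invariant Poisson kernel $P(w, \cdot)$ is a probability density on $\bS_d$. Setting $a_n(\zeta) = a((1 - 1/n)\zeta)$ and $b_n(\zeta) = \tilde b((1 - 1/n)\zeta)$ for $\zeta \in \bS_d$ produces $a_n \in \AB \subset H^\infty(\bB_d) \cap \rC(\bS_d)$, $b_n \in \rC(\bS_d)$, and $\|a_n + b_n\|_\infty \le \|f\|_\infty$. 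Uniform continuity of $\tilde b$ on $\ol{\bB_d}$ gives $b_n \to b$ uniformly, while dominated convergence combined with the existence of radial boundary values for $H^\infty(\bB_d)$ functions yields $a_n \to a$ in the weak-$*$ topology of $L^\infty(\bS_d,\sigma_d)$. Lemma~\ref{lem:GW_L_infty} would then force $\theta$ to be absolutely continuous, contradicting the observation above.

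The delicate step is condition~(i) of the approximation property, the norm bound $\|a_n + b_n\|_\infty \le \|f\|_\infty$; it is precisely this bound that dictates the use of the invariant Poisson extension of $b$ (rather than an arbitrary Tietze-type extension), so that $a + \tilde b$ remains dominated by $\|f\|_\infty$ throughout $\bB_d$ and the dilates inherit the correct norm control on $\bS_d$.
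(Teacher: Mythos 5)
Your proposal is correct and follows essentially the same route as the paper: the same two states (evaluation at the origin and the slice measure over the circle $\bT\times\{0\}^{d-1}$), the same appeal to Lemma~\ref{lem:GW_L_infty}, and the same verification of the approximation property via the invariant Poisson integral and radial dilation, with the norm bound coming from $a_n+b_n$ being the dilate of $P[f]$. The extra details you supply (identifying $H^\infty(\bB_d)\cap\rC(\bS_d)$ inside $\AB$, and the dominated-convergence argument for weak-$*$ convergence of $(a_n)$) are accurate elaborations of steps the paper leaves implicit.
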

\begin{proof}
Let $\phi$ be the absolutely continuous state on $H^\infty(\bB_d)$ of evaluation at the origin.
Let $\nu$ be the $1$-dimensional normalized Lebesgue measure on the circle
  \begin{equation*}
    \{(z,0,\ldots,0): z \in \bT\} \subset \bS_d
  \end{equation*}
  and define the state $\theta$ on $\rC(\bT)$ as
  \[
  \theta(f)=\int_{\bS_d}fd\nu,\quad f\in \rC(\bT).
  \]
Cauchy's formula implies that $\theta$ agrees with $\phi$ on $H^\infty(\bB_d)\cap \rC(\bS_d)$.
  Since $\nu$ is not absolutely continuous with respect to $\sigma_d$,  the state $\theta$ is not absolutely continuous and the claim follows from Lemma \ref{lem:GW_L_infty}, once we verify that $H^\infty(\bB_d) + C(\bS_d)$
  has the required approximation property.

  To verify this property, we make use of the invariant Poisson integral \cite[page 41]{rudin2008}, which we denote by $P[u]$ for a function $u\in L^1(\bS_d,\sigma_d)$.
  If $a \in H^\infty(\mathbb{B}_d)$, then $P[a^*] = a$; this follows e.g.\ from \cite[Theorem 5.6.8]{rudin2008},
  as the map $a \mapsto a^*$ is injective by part (a) of that result, and $P[a^*]^* = a^*$ by part (b) of the same result. Contractivity of the invariant Poisson integral operator \cite[Theorem 3.3.4]{rudin2008} therefore
  shows that $\|a\|_{H^\infty(\mathbb{B}_d)} = \|a^*\|_{L^\infty(\mathbb{S}_d,\sigma_d)}$. From now on, we will
  identify $a$ and $a^*$.

  Now, let $f = a + b \in H^\infty(\bB_d) + \rC(\bS_d)$.
  For each positive integer $n$, define functions $a_n$ and $b_n$ by
  \begin{equation*}
    a_n(z) = P[a](( 1- 1/n) z), \quad z \in \overline{\mathbb{B}_d}
  \end{equation*}
  and
  \begin{equation*}
    b_n(z) = P[b]((1 - 1/n) \zeta), \quad \zeta \in \partial \mathbb{S}_d.
  \end{equation*}
  Since $a \in H^\infty(\mathbb{B}_d)$, we have $a_n(z) = a( (1 - 1/n) z)$ for all $z \in \overline{\mathbb{B}_d}$
  by the discussion in the preceding paragraph.
  In particular, $a_n \in H^\infty(\mathbb{B}_d) \cap \rC(\mathbb{S}_d)$, and $(a_n)$ converges
  to $a$ in the weak-$*$ topology of $L^\infty(\mathbb{S}_d, \sigma_d)$.
  Moreover, it is a standard property of the invariant Poisson integral that $(b_n)$ is a sequence in $\rC( \mathbb{S}_d)$ converging to $b$ uniformly
  on $\mathbb{S}_d$; see \cite[Theorem 3.3.4 (a)]{rudin2008}.
  Finally,
  \[(a_n+b_n)(\zeta)=P[a+b]((1-1/n)\zeta), \quad \zeta \in \mathbb{S}_d,\]
  whence contractivity of the invariant Poisson integral
  operator \cite[Theorem 3.3.4 (b)]{rudin2008} shows that
  \[
  \|a_n+b_n\|\leq \|a+b\|=\|f\|
  \]
  for each $n$.
 \end{proof}

In a similar way, we can tackle the case of the polydisc.

\begin{proposition}\label{P:HpolydiscGW}
Let $d\geq 2$ be an integer and let $\mu_d=\sigma\times \sigma\times \ldots \sigma$ denote the standard product measure on $\bT^d$. Then, the inclusion $H^\infty(\bD^d)\subset L^\infty( \bT^d,\mu_d)$ does not have the Gleason--Whitney property for states relative to $L^\infty(\bT_d,\mu_d)$. 
\end{proposition}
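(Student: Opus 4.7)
The plan is to replicate the argument of Proposition \ref{P:HballGW} with the diagonal circle of the polytorus replacing the slice circle of the sphere. First I would let $\phi$ be the absolutely continuous state on $H^\infty(\bD^d)$ of evaluation at the origin, and define a state $\theta$ on $\rC(\bT^d)$ by
\[
\theta(f)=\int_{\bT}f(\zeta,\zeta,\ldots,\zeta)\,d\sigma(\zeta), \quad f\in\rC(\bT^d).
\]
Since the diagonal circle $\Delta=\{(\zeta,\ldots,\zeta):\zeta\in\bT\}$ has $\mu_d$-measure zero when $d\ge 2$, the representing measure of $\theta$ is singular with respect to $\mu_d$, so $\theta$ is not absolutely continuous relative to $L^\infty(\bT^d,\mu_d)$. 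The strategy is then to verify the hypotheses of Lemma \ref{lem:GW_L_infty} to reach a contradiction.

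For the compatibility of $\phi$ and $\theta$ on the intersection $H^\infty(\bD^d)\cap\rC(\bT^d)$, I would fix $F$ in this intersection and, for each $r\in(0,1)$, observe that $\lambda\mapsto F(r\lambda,\ldots,r\lambda)$ is holomorphic on a neighbourhood of $\ol{\bD}$. Cauchy's formula then gives $F(0)=\int_{\bT}F(r\zeta,\ldots,r\zeta)\,d\sigma(\zeta)$. Letting $r\to 1^-$, the dilates $\zeta\mapsto F(r\zeta)$ converge to $F$ uniformly on $\bT^d$, because the Poisson integral on the polydisc recovers a continuous boundary function uniformly; the right-hand side therefore tends to $\theta(F)$, yielding $\theta(F)=F(0)=\phi(F)$.

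The approximation property of $H^\infty(\bD^d)+\rC(\bT^d)$ demanded by Lemma \ref{lem:GW_L_infty} is handled by dilation: given $f=a+b$ with $a\in H^\infty(\bD^d)$ and $b\in\rC(\bT^d)$, I would set $a_n(\zeta)=P[a](r_n\zeta)$ and $b_n(\zeta)=P[b](r_n\zeta)$ with $r_n=1-1/n$, using the Poisson integral $P[\cdot]$ on the polydisc. Then $a_n\in H^\infty(\bD^d)\cap\rC(\bT^d)$ converges weak-$*$ to $a$ in $L^\infty(\bT^d,\mu_d)$, $(b_n)$ converges to $b$ uniformly on $\bT^d$, and contractivity of the Poisson operator gives $\|a_n+b_n\|_\infty\le\|f\|_\infty$. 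An appeal to Lemma \ref{lem:GW_L_infty} then closes the argument.

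I do not anticipate a genuine obstacle, since every step parallels the ball case. The most delicate technical point is the first computation, which relies on identifying $F\in H^\infty(\bD^d)\cap\rC(\bT^d)$ with its holomorphic extension on $\bD^d$ in such a way that $F(r\cdot)\to F$ uniformly on $\bT^d$ as $r\to 1^-$. This is a standard consequence of the Poisson integral representation of $H^\infty$ functions on the polydisc, and it is what allows Cauchy's formula along the diagonal circle to pass to the boundary.
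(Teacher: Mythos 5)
Your proposal is correct and follows essentially the same route as the paper: the paper also takes $\theta$ to be integration against the push-forward of $\sigma$ under the diagonal embedding $\zeta\mapsto(\zeta,\ldots,\zeta)$, checks agreement with evaluation at the origin on $H^\infty(\bD^d)\cap\rC(\bT^d)$ via Cauchy's formula, and invokes Lemma \ref{lem:GW_L_infty} after verifying the approximation property by Poisson-integral dilations on the polydisc. The only difference is cosmetic (the paper phrases $\theta$ via a push-forward measure and leaves the dilation verification to the reader, while you spell out the limiting argument along the slices), so no changes are needed.
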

\begin{proof}
Let $\phi$ be the absolutely continuous state on $H^\infty(\bD^d)$ of evaluation at the origin. 
Let $\Delta:\bT\to \bT^d$ be the injective continuous function defined as
\[
\Delta(\zeta)=(\zeta,\zeta,\ldots,\zeta), \quad \zeta\in \bT.
\]
Let $\nu$ be the push-forward of the measure $\sigma$ by $\Delta$, and let $\theta$ be the state on $\rC(\bT^d)$ defined as
\[
\theta(f)=\int_{\bT^d}fd\nu, \quad f\in \rC(\bT^d).
\]
By the change of variable formula, for $f\in H^\infty(\bD^d) \cap \rC(\bT^d)$ we find
\[
\theta(f)=\int_{\bT^d}fd\nu=\int_{\bT}(f\circ \Delta) d\sigma=(f\circ \Delta)(0)=\phi(f).
\]
Since $\nu$ is not absolutely continuous with respect to $\mu_d$, the state $\theta$ is not absolutely continuous. Hence, the desired statement follows from Lemma \ref{lem:GW_L_infty}, once we show that $H^\infty(\bD^d)+ \rC(\bT^d)$ has the required approximation property. This can be done, as above, using the Poisson integral on $\bT^d$, based on \cite[Theorems 2.1.2, 2.1.3 and 2.3.1]{rudin1969}. We leave the standard verification to the reader.
\end{proof}

We remark that it does not seem to be known whether $H^\infty(\bB_d)$ or $H^\infty(\bD^d)$ admit a Lebesgue decomposition relative to the ambient $L^\infty$-space on the boundary. By virtue of Propositions \ref{P:HballGW} and \ref{P:HpolydiscGW} along with Proposition \ref{P:GWcompatspaces}, we see that if such decompositions do exist, then they cannot be compatible with that of the ambient space.

\section{Absolute continuity and homomorphisms}\label{S:homom}

Section \ref{S:examples} illustrates that the Gleason--Whitney property is somewhat rare, especially among commonly studied concrete operator algebras. In these instances, states rarely display the rigidity behaviour required by the Gleason--Whitney property. 
It is natural to wonder if maps enjoying richer structure, such as homomorphisms, exhibit this rigidity property of extensions more widely. We end the paper by tackling this question.

Let $\W$ be a von Neumann algebra and let $\A\subset \W$ be a unital operator algebra. A unital bounded homomorphism $\pi:\A\to B(\H)$ will be said to be \emph{absolutely continuous} if, for  every $x\in \H$, the functional
\[
a\mapsto \langle \pi(a)x,x\rangle, \quad a\in \A
\]
is absolutely continuous relative to $\W$. By means of the polarization identity, this is equivalent to the requirement that for every $x,y\in \H$ the functional
\[
a\mapsto \langle \pi(a)x,y\rangle, \quad a\in \A
\]
be absolutely continuous relative to $\W$.

We start with concrete example showing how to relate absolute continuity of states to that of certain homomorphisms.

\begin{example}\label{E:GNS}
As shown in the proof of Proposition \ref{P:GWADHinf}, there exists a state $\phi_0$ on $\Hinf$ that is not absolutely continuous and that satisfies
\[
\phi(f)=f(0), \quad f\in \AD.
\]
From $\varphi$, we will construct a homomorphism $\rho: \Hinf \to B(\mathcal{H})$ that is not absolutely
continuous, yet the restriction of $\rho$ to $\rA(\mathbb{D})$ is absolutely continuous.

Let $X$ denote the maximal ideal space of $L^\infty(\mathbb{T},\sigma)$.
Given a function $f \in L^\infty(\mathbb{T},\sigma)$, we denote its Gelfand transform by $\widehat{f} \in \rC(X)$.
By the Hahn--Banach theorem and the Riesz representation theorem, there exists a regular Borel probability
measure $\mu$ on $X$ such that $\varphi(f) = \int_X \widehat{f} \, d \mu$ for all $f \in \Hinf$.
Consider the representation
\begin{equation*}
  \tau: L^\infty(\mathbb{T},\sigma) \to B(L^2(\mu)), \quad f \mapsto M_{\widehat{f}},
\end{equation*}
by multiplication operators. (This is the GNS representation of the Hahn--Banach extension of $\varphi$ to $L^\infty(\mathbb{T},\sigma)$.) Finally, let $\rho$ be the restriction of $\tau$ to $\Hinf$.
Since
\begin{equation*}
  \varphi(f) = \langle \rho(f) 1, 1 \rangle, \quad f \in \Hinf,
\end{equation*}
it is clear that $\rho$ is not absolutely continuous.

It remains to see that the restriction of $\rho$ to $\rA(\mathbb{D})$ is absolutely continuous.
In fact, we will show that the restriction of $\tau$ to $\rC(\mathbb{T})$ is absolutely continuous.
To this end, let $h \in L^2(\mu)$ be a unit vector and consider the state
\begin{equation*}
  \psi: \rC(\mathbb{T}) \to \mathbb{C}, \quad f \mapsto \langle \tau(f) h, h \rangle
  = \int_{X} \widehat{f} |h|^2 \, d \mu.
\end{equation*}
There exists a Borel probability measure $\nu$ on $\mathbb{T}$ such
that $\psi(f) = \int_{\mathbb{T}} f \, d \nu$ for all $f \in \rC(\mathbb{T})$.
We have to show that $\nu$ is absolutely continuous with respect to Lebesgue measure $\sigma$ on $\mathbb{T}$.
By regularity of $\nu$, it suffices to show that whenever $K \subset \mathbb{T}$ is a compact set of Lebesgue
measure zero, then $\nu(K) = 0$.

To prove this claim, let $K$ be such a set, and let $f \in \rC(\mathbb{T})$ be such that
$f \big|_K = 1$ and $0 \le f(z) < 1$ for all $z \in \mathbb{T} \setminus K$.
Since
\begin{equation*}
  \int_{X} \widehat{f} \, d \mu = \varphi(f) = f(0) = \int_{\mathbb{T}} f \, d \sigma
\end{equation*}
holds for all $f \in \rA(\mathbb{D})$, the outer equality holds for all $f \in \rC(\mathbb{T})$, and so
\begin{equation*}
\lim_{n \to \infty} \int_{X} \widehat{f}^n \, d \mu = \lim_{n \to \infty} \int_{\mathbb{T}} f^n\, d \sigma
  = \sigma(K) = 0
\end{equation*}
by the dominated convergence theorem. Since $0 \le \widehat{f} \le 1$, it follows that $(\widehat{f}^n)_n$ converges to zero $\mu$-almost everywhere.
Thus,
\begin{equation*}
  \nu(K) = \lim_{n \to \infty} \int_{\mathbb{T}} f^n \, d \nu
  = \lim_{n \to \infty} \psi(f^n)
  = \lim_{n \to \infty} \int_{X} \widehat{f}^n |h|^2 \, d \mu = 0,
\end{equation*}
as desired.
  \qed
\end{example}

The homomorphism $\rho:\Hinf\to B(L^2(\mu))$ from the previous example has peculiar properties. Indeed, its restriction to $\AD$ is absolutely continuous, yet $\rho$ itself is not absolutely continuous on $\Hinf$. Thus, we view $\rho$ to be an instance of a pathological functional calculus for the unitary operator $\rho(z) \in B(L^2(\mu))$. Note however that the space $L^2(\mu)$ constructed above tends to be quite large, as there is no reason to expect it to be separable.

Such pathological functional calculi were examined in great depth in the work of Miller--Olin--Thompson \cite{MOT86}. Therein, by means of more sophisticated arguments, examples of these homomorphisms were constructed on separable Hilbert spaces. In fact, as mentioned in Remark \ref{rem:MOT-GW}, it can even be achieved that $\rho|_{\AD}$ coincides with the standard representation on $L^2(\bT,\sigma)$ where $\rho(z)$ is the familiar bilateral shift \cite[Example 40]{MOT86}.

Finally, we show how the concrete construction from Example \ref{E:GNS} can be adapted to more general contexts. 

\begin{theorem}\label{T:staterep}
  Let $\W$ be a von Neumann algebra and let $\A\subset \W$ be a unital subalgebra that admits a Lebesgue decomposition relative to $\W$.  Let $\pi:\A\to B(\H)$ be a unital contractive homomorphism, and let $\xi\in \H$ be a unit vector that is cyclic for the $\rC^*$-algebra generated by $\pi(\mathcal{A})$ and the commutant of $\pi(\mathcal{A})$. Let $\psi:\A\to \bC$ be the state defined as
\[
\psi(a)=\langle \pi(a)\xi,\xi\rangle,\quad a\in \A.
\]
Then, $\pi$ is absolutely continuous if and only if $\psi$ is absolutely continuous.
\end{theorem}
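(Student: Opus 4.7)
The ``only if'' direction is immediate: if $\pi$ is absolutely continuous, then in particular the diagonal matrix coefficient $a \mapsto \langle \pi(a)\xi, \xi\rangle = \psi(a)$ extends weak-$*$ continuously to $\W$. For the converse, assume $\psi$ is absolutely continuous, and let $\fz \in \A^{**}$ denote the Lebesgue projection of $\A$ provided by Theorem~\ref{T:Lebproj}. The plan is to extend $\pi$ canonically to a unital contractive map $\tilde\pi:\A^{**}\to B(\H)$ defined by
\[
\langle \tilde\pi(\Lambda) x, y\rangle = \Lambda(\varphi_{x,y}), \quad x, y\in \H,
\]
where $\varphi_{x,y}(a) := \langle \pi(a)x, y\rangle$, and then to analyze the operator $P := \tilde\pi(\fz)$. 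Using Goldstine's theorem together with Arens regularity as recalled in Subsection~\ref{SS:bidualopalg}, one checks that $\tilde\pi$ is a homomorphism whose image is contained in $\pi(\A)''$.

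I would then verify that $P$ is a self-adjoint projection commuting with the $\rC^*$-algebra $\mathcal{M}:=\rC^*(\pi(\A), \pi(\A)')$. Since $\|P\| \leq 1$ and $P^2 = P$, $P$ is automatically an orthogonal projection, a consequence of the classical fact that contractive idempotents on Hilbert space are self-adjoint. Centrality of $\fz$ in $\A^{**}$ yields commutation of $P$ with $\pi(\A)$; self-adjointness then gives commutation with $\pi(\A)^*$, hence with $\rC^*(\pi(\A))$. Since $P \in \pi(\A)''$, $P$ also commutes with $\pi(\A)'$, and by norm continuity $P$ commutes with all of $\mathcal{M}$.

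The bridge to $\psi$ comes from the identity
\[
(\fz \cdot \varphi_{x,y})(a) = \fz(\varphi_{\pi(a)x, y}) = \langle P \pi(a) x, y\rangle,
\]
obtained directly from the module structure on $\A^*$. Consequently $\varphi_{x,y}$ is absolutely continuous if and only if $Px = x$, and thus $\pi$ is absolutely continuous if and only if $P = I$. Since $\psi$ is absolutely continuous, $\fz \cdot \psi = \psi$; evaluating at $a = 1_\A$ gives $\fz(\psi) = \psi(1_\A) = 1$. On the other hand, $\fz(\psi) = \fz(\varphi_{\xi,\xi}) = \langle P\xi,\xi\rangle$, so since $P$ is a self-adjoint projection and $\|\xi\|=1$ we obtain
\[
\|(I-P)\xi\|^2 = \langle (I-P)\xi,\xi\rangle = 1 - \langle P\xi,\xi\rangle = 0,
\]
whence $P\xi = \xi$. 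Since $P$ commutes with every $m \in \mathcal{M}$, we conclude that $P(m\xi) = m(P\xi) = m\xi$, and the cyclicity of $\xi$ for $\mathcal{M}$ forces $P = I$ on the dense subspace $\mathcal{M}\xi$, hence everywhere.

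The main subtle step is establishing that $P$ is a self-adjoint projection commuting with the whole of $\mathcal{M}$; this combines the observation that contractive idempotents in $B(\H)$ are orthogonal projections with the inclusion $\tilde\pi(\A^{**}) \subset \pi(\A)''$. Once these features are in place, the small identity $\|(I-P)\xi\|^2 = 1 - \fz(\psi)$---which relies both on the self-adjointness of $P$ and on the fact that $\fz(\psi) = \psi(1_\A)$ for absolutely continuous $\psi$---combined with the precise algebra governed by the cyclicity hypothesis finishes the argument.
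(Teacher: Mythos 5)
Your proof is correct and follows essentially the same route as the paper's: extend $\pi$ to a weak-$*$ continuous homomorphism on $\A^{**}$, observe that the image of the Lebesgue projection $\fz$ is a self-adjoint projection commuting with $\rC^*(\pi(\A)\cup\pi(\A)')$, deduce from absolute continuity of $\psi$ that it fixes $\xi$, and invoke cyclicity to conclude it equals $I$. The only cosmetic difference is that you reach $P\xi=\xi$ via the identity $\fz(\varphi_{\xi,\xi})=\langle P\xi,\xi\rangle=1$, whereas the paper computes $\|\widehat\pi(I-\fz)\xi\|^2$ directly with a Goldstine net; these are the same calculation.
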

\begin{proof}
It is immediate from the definition that $\psi$ is absolutely continuous if $\pi$ is. 

Assume henceforth that $\psi$ is absolutely continuous. Let $\widehat\pi:\A^{**}\to B(\H)$ be the unique weak-$*$ continuous extension of $\pi$, which is also a unital contractive homomorphism; see \cite[2.5.5]{BLM2004}. By Theorem \ref{T:Lebproj}, $\A$ admits a Lebesgue projection $\fz\in \A^{**}$.
Then, $\widehat\pi(\fz)$ is a contractive idempotent, and hence it is a self-adjoint projection in $B(\H)$. Since $\fz\in \A^{**}$ is central, $\widehat\pi(\fz)$ commutes with $\pi(\mathcal{A})$.
Moreover, $\widehat{\pi}(\fz)$ commutes with the commutant $\pi(\mathcal{A})'$ of $\pi(\mathcal{A})$,
hence it commutes with $C^*(\pi(\mathcal{A}) \cup \pi(\mathcal{A})')$.

We first claim that $\widehat\pi(\fz)=I$.  Since $\xi$ is cyclic for $\rC^*(\pi(\A) \cup \pi(\mathcal{A})')$, it suffices to verify that $\widehat\pi(I-\fz)\xi=0$.
To see this, invoke Goldstine's theorem to find a net of contractions $(a_i)_i$ in $\A$ such that $(a_i)_i$ converges to $I-\fz$ in the weak-$*$ topology of $\A^{**}$. Note then that $(\fz a_i)_i$ converges to $0$ in the weak-$*$ topology of $\A^{**}$. By assumption, $\psi$ is absolutely continuous, whence $ \psi\cdot \fz=\psi$. We may thus compute
\begin{align*}
\|\widehat\pi(I-\fz)\xi\|^2&=
\langle \widehat\pi(I-\fz)\xi,\xi\rangle=\lim_i \langle \pi(a_i)\xi,\xi\rangle\\
&=\lim_i\psi(a_i)=\lim_i \psi(\fz a_i)\\
&=0
\end{align*}
as desired. Therefore, $\widehat\pi(\fz)=I$.

Finally, given $h\in \H$ we consider the functional $\phi:\A\to\bC$ defined as
\[
\phi(a)=\langle \pi(a)h,h \rangle, \quad a\in \A.
\]
For every $a\in \A$, we find that 
\begin{align*}
(\phi\cdot \fz)(a)&=\widehat{\phi}(\fz a)=\langle \widehat{\pi}(\fz a)h,h\rangle\\
&=\langle \pi( a)h,h\rangle=\phi(a).
\end{align*}
This implies that $\phi$ is absolutely continuous  by Theorem \ref{T:Lebproj}. Since $h\in \H$ was arbitrary, we infer that $\pi$ is absolutely continuous, and the proof is complete.
\end{proof}

\begin{remark}
  Let $\mathcal{A} = \rA(\mathbb{D})$, $\mathcal{W} = L^\infty(\mathbb{T},\sigma)$ and $\pi$ be the restriction
  of the homomorphism $\rho$ of Example \ref{E:GNS} to $\rA(\mathbb{D})$. Now, $\rC(\bT)$ admits a Lebesgue decomposition relative to $\W$ by Lemma \ref{L:C*decomp}. Further, the classical F.\ and M.\ Riesz theorem along with Theorem \ref{T:Lebcompat} implies that $\mathcal{A}$ admits a Lebesgue decomposition compatible with that of $\rC(\bT)$.   Next, we observe that the vector $\xi = 1 \in L^2(\mu)$ is cyclic for $\tau(L^\infty(\bT,\sigma))$, which is contained
  in the commutant of $\pi(\mathcal{A})$. Thus, Theorem \ref{T:staterep} gives another argument
  for why $\pi$ is absolutely continuous.
\end{remark}

\bibliographystyle{plain}
\bibliography{GW}

\end{document}